\theoremstyle{plain}
\newtheorem{theorem}                {Theorem}      [section]
\newtheorem{proposition}  [theorem]  {Proposition}
\newtheorem{corollary}    [theorem]  {Corollary}
\newtheorem{lemma}        [theorem]  {Lemma}
\theoremstyle{definition}
\newtheorem{example}      [theorem]  {Example}
\newtheorem{remark}       [theorem]  {Remark}
\newtheorem{definition}   [theorem]  {Definition}
\newcommand{\dv}{\text{ }dV}
\numberwithin{equation}{section}
\def \T{{\mathbb T}}
\def \R{{\mathbb R}}
\def \s{{\mathbb S}}
\def \n{{\mathbb N}}
\def \H{{\mathbb H}}
\def \link {~}
\def \1 {\`}
\DeclareMathOperator{\grad}{grad}
\DeclareMathOperator{\trace}{trace}
\numberwithin{equation}{section}
\title[]{Higher order energy functionals}
\author{V.~Branding}
\address{University of Vienna\\
Department of Mathematics\\
Oskar-Morgenstern-Platz 1, Vienna, 1090, Austria}
\email{volker.branding@univie.ac.at}
\author{S.~Montaldo}
\address{Universit\`a degli Studi di Cagliari\\
Dipartimento di Matematica e Informatica\\
Via Ospedale 72\\
09124 Cagliari, Italia}
\email{montaldo@unica.it}
\author{C.~Oniciuc}
\address{Faculty of Mathematics\\ ``Al.I. Cuza'' University of Iasi\\
Bd. Carol I no. 11 \\
700506 Iasi, ROMANIA}
\email{oniciucc@uaic.ro}
\author{A.~Ratto}
\address{Universit\`a degli Studi di Cagliari\\
Dipartimento di Matematica e Informatica\\
Viale Merello 93\\
09123 Cagliari, Italia}
\email{rattoa@unica.it}
\begin{document}
\begin{abstract}
The study of higher order energy functionals was first proposed by Eells and Sampson in 1965 and, later, by Eells and Lemaire in 1983. These functionals provide a natural generalization of the classical energy functional. More precisely, Eells and Sampson suggested the investigation of the so-called $ES-r$-energy functionals $ E_r^{ES}(\varphi)=(1/2)\int_{M}\,|(d^*+d)^r (\varphi)|^2\,dV$, where $ \varphi:M \to N$ is a map between two Riemannian manifolds. In the initial part of this paper we shall clarify some relevant issues about the definition of an $ES-r$-harmonic map, i.e, a critical point of $ E_r^{ES}(\varphi)$. That seems important to us because in the literature other higher order energy functionals have been studied by several authors and consequently some recent examples need to be discussed and extended: this shall be done in the first two sections of this work, where we obtain the first examples of proper critical points of $E_r^{ES}(\varphi)$ when $N=\s^m$ $(r \geq4,\, m\geq3)$, and we also prove some general facts which should be useful for future developments of this subject. Next, we shall compute the Euler-Lagrange system of equations for $E_r^{ES}(\varphi)$ for $r=4$. We shall apply this result to the study of maps into space forms and to rotationally symmetric maps: in particular, we shall focus on the study of various family of conformal maps. In Section~\ref{section-Models}, we shall also show that, even if $2 r > \dim M$, the functionals $ E_r^{ES}(\varphi)$ may not satisfy the classical Palais-Smale Condition (C). In the final part of the paper we shall study the second variation and compute index and nullity of some significant examples.
\end{abstract}

\subjclass[2000]{Primary: 58E20; Secondary: 53C43.}

\keywords{$ES-r$-harmonic maps, $r$-harmonic maps, reduction theory, equivariant differential geometry, second variation}

\thanks{The first author was supported by Austrian Science Fund (FWF) (Projekt P30749-N35); the second and the last authors  were supported by Fondazione di Sardegna (project STAGE) and Regione Autonoma della Sardegna (Project KASBA); the third author was supported by a project funded by the Ministry of
Research and Innovation within Program 1 - Development of the national RD
system, Subprogram 1.2 - Institutional Performance - RDI excellence
funding projects, Contract no. 34PFE/19.10.2018.}

\maketitle

\section{Introduction}\label{intro}

{\it Harmonic maps} are the critical points of the {\em energy functional}
\begin{equation}\label{energia}
E(\varphi)=\frac{1}{2}\int_{M}\,|d\varphi|^2\,dV \, ,
\end{equation}
where $\varphi:M\to N$ is a smooth map between two Riemannian
manifolds $(M,g)$ and $(N,h)$. In particular, $\varphi$ is harmonic if it is a solution of the Euler-Lagrange system of equations associated to \eqref{energia}, i.e.,
\begin{equation}\label{harmonicityequation}
  - d^* d \varphi =   {\trace} \, \nabla d \varphi =0 \, .
\end{equation}
The left member of \eqref{harmonicityequation} is a vector field along the map $\varphi$ or, equivalently, a section of the pull-back bundle $\varphi^{-1} TN$: it is called {\em tension field} and denoted $\tau (\varphi)$. In addition, we recall that, if $\varphi$ is an \textit{isometric immersion}, then $\varphi$ is a harmonic map if and only if the immersion $\varphi$ defines a minimal submanifold of $N$ (see \cite{EL83, EL1} for background). For simplicity, we shall assume that $M$ is compact unless differently specified. However, the Euler-Lagrange equations have validity also when the domain is noncompact, in which case they are referred to compactly supported variations. Now, let us denote $\nabla^M, \nabla^N$ and $\nabla^{\varphi}$ the induced connections on the bundles $TM, TN$ and $\varphi ^{-1}TN$ respectively. The \textit{rough Laplacian} on sections of $\varphi^{-1}  TN$, denoted $\overline{\Delta}$, is defined by
\begin{equation*}
    \overline{\Delta}=d^* d =-\sum_{i=1}^m\left(\nabla^{\varphi}_{e_i}
    \nabla^{\varphi}_{e_i}-\nabla^{\varphi}_
    {\nabla^M_{e_i}e_i}\right)\,,
\end{equation*}
where $\{e_i\}_{i=1}^m$ is a local orthonormal frame field tangent to $M$. In recent years, the following $r$-order versions of the energy functional have attracted an increasing interest from researchers. If $r=2s$, $s \geq 1$:
\begin{eqnarray}\label{2s-energia}
E_{2s}(\varphi)&=& \frac{1}{2} \int_M \, \langle \, \underbrace{(d^* d) \ldots (d^* d)}_{s\, {\rm times}}\varphi, \,\underbrace{(d^* d) \ldots (d^* d)}_{s\, {\rm times}}\varphi \, \rangle_{_N}\, \,dV \nonumber\\ 
&=& \frac{1}{2} \int_M \, \langle \,\overline{\Delta}^{s-1}\tau(\varphi), \,\overline{\Delta}^{s-1}\tau(\varphi)\,\rangle_{_N} \, \,dV\,.
\end{eqnarray}
In the case that $r=2s+1$:
\begin{eqnarray}\label{2s+1-energia}
E_{2s+1}(\varphi)&=& \frac{1}{2} \int_M \, \langle\,d\underbrace{(d^* d) \ldots (d^* d)}_{s\, {\rm times}}\varphi, \,d\underbrace{(d^* d) \ldots (d^* d)}_{s\, {\rm times}}\varphi\,\rangle_{_N}\, \,dV\nonumber \\
&=& \frac{1}{2} \int_M \,\sum_{j=1}^m \langle\,\nabla^\varphi_{e_j}\, \overline{\Delta}^{s-1}\tau(\varphi), \,\nabla^\varphi_{e_j}\,\overline{\Delta}^{s-1}\tau(\varphi)\, \rangle_{_N} \, \,dV \,.
\end{eqnarray}
We say that a map $\varphi$ is \textit{$r$-harmonic} if, for all variations $\varphi_t$,
$$
\left .\frac{d}{dt} \, E_{r}(\varphi_t) \, \right |_{t=0}\,=\,0 \,\,.
$$
In the case that $r=2$, the functional \eqref{2s-energia} is called \textit{bienergy} and its critical points are the so-called \textit{biharmonic maps}. At present, a very ample literature on biharmonic maps is available and we refer to \cite{Chen, Jiang, SMCO, Ou} for an introduction to this topic. More generally, the \textit{$r$-energy functionals} $E_r(\varphi)$ defined in \eqref{2s-energia}, \eqref{2s+1-energia} have been intensively studied (see \cite{Volker, Volker2, Maeta1, Maeta3, Maeta2, Mont-Ratto4, Na-Ura, Wang, Wang2}, for instance). In particular, the Euler-Lagrange equations for $E_r(\varphi)$ were obtained by Wang \cite{Wang} and Maeta \cite{Maeta1}. The expressions for their second variation were derived in \cite{Maeta3}, where it was also shown that a biharmonic map is not always $r$-harmonic ($r \geq 3$) and, more generally, that an $s$-harmonic map is not always $r$-harmonic ($2\leq s < r$). On the other hand, any harmonic map is trivially $r$-harmonic for all $r\geq 2$. Therefore, we say that an $r$-harmonic map is {\it proper} if it is not harmonic (similarly, an $r$-harmonic submanifold, i.e., an $r$-harmonic isometric immersion, is {\it proper} if it is not minimal). As a general fact, when the ambient space has nonpositive sectional curvature there are several results which assert that, under suitable conditions, an $r$-harmonic submanifold is minimal (see \cite{Chen}, \cite{Maeta1}, \cite{Maeta4} and \cite{Na-Ura}, for instance), but the Chen conjecture that an arbitrary biharmonic submanifold of $\R^n$ must be minimal is still open (see \cite{Chen2} for recent results in this direction). More generally, the Maeta conjecture (see \cite{Maeta1}) that any $r$-harmonic submanifold of the Euclidean space is minimal is open. By contrast, let us denote by $\s^m(R)$ the Euclidean sphere of radius $R$ and write $\s^m$ for $\s^m(1)$: for the purposes of the present paper it is important to recall the following examples of proper $r$-harmonic submanifolds into spheres (see \cite{Maeta2} for the case $r=3$ and \cite{Mont-Ratto4} for $r\geq 4$):
\begin{theorem}\label{Corollary-parallel-spheres} Assume that $r \geq 2, m \geq 2$. Then the small hypersphere $ i\,: \, \s^{m-1}(R)\, \hookrightarrow \, \s^{m}$ is a proper $r$-harmonic submanifold of $\s^{m}$ if and only if the radius $R$ is equal to $1 \slash \sqrt{r}$.
\end{theorem}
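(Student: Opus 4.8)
The plan is to use the reduction method of equivariant geometry. Parametrise $\s^m\subset\R^{m+1}=\R^m\times\R$ so that the distance spheres from the north pole are $\s^{m-1}(\sin\theta)=\{(\sin\theta\,\omega,\cos\theta):\omega\in\s^{m-1}(1)\}$, $\theta\in(0,\pi)$, and consider the family of maps
\[
\varphi_\theta:\s^{m-1}(R)\to\s^m,\qquad R\,\omega\mapsto(\sin\theta\,\omega,\cos\theta),
\]
so that the inclusion $i$ is exactly $\varphi_{\theta_0}$ for the value $\theta_0$ with $\sin\theta_0=R$. These are precisely the $SO(m)$--equivariant maps $\s^{m-1}(R)\to\s^m$: any such map must carry the single orbit $\s^{m-1}(R)$ into an orbit $\s^{m-1}(\sin\theta)$ of the cohomogeneity--one action on the target, and transitivity forces the angle $\theta$ to be constant. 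Since the functionals $E_r$ are $SO(m)$--invariant and the set of equivariant maps is the one--parameter family $\{\varphi_\theta\}$, the principle of symmetric criticality reduces the problem to a one--dimensional one: $\varphi_\theta$ is $r$--harmonic if and only if $\theta$ is a critical point of $\theta\mapsto E_r(\varphi_\theta)$. Unlike the usual equivariant reductions, where the reduced problem is an ODE, here it is genuinely finite--dimensional.

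The key step is to evaluate $E_r(\varphi_\theta)$ in closed form. The map $\varphi_\theta$ factors as a homothety $\s^{m-1}(R)\to\s^{m-1}(\sin\theta)$, which is harmonic, followed by the inclusion of the totally umbilical hypersphere $\s^{m-1}(\sin\theta)\subset\s^m$, whose shape operator with respect to the unit normal $\nu=(\cos\theta\,\omega,-\sin\theta)$ is $\cot\theta\cdot\mathrm{Id}$. Combining these with the Gauss formula gives
\[
\tau(\varphi_\theta)=-(m-1)\,\tfrac{\sin^2\theta}{R^2}\,\cot\theta\;\nu,
\]
a normal field of constant length. The crucial observation is then that $\nu$ is an eigensection of the rough Laplacian along $\varphi_\theta$: since $\nabla^{\varphi_\theta}_X\nu=\cot\theta\;d\varphi_\theta(X)$ for every $X$ tangent to $\s^{m-1}(R)$, the Gauss formula makes all tangential contributions cancel and one gets $\overline{\Delta}\,\nu=-\cot\theta\;\tau(\varphi_\theta)=(m-1)\tfrac{\sin^2\theta}{R^2}\cot^2\theta\;\nu$. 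Hence every iterated Laplacian $\overline{\Delta}^{\,k}\tau(\varphi_\theta)$ is a constant multiple of $\nu$ and every $\nabla^{\varphi_\theta}\overline{\Delta}^{\,k}\tau(\varphi_\theta)$ a constant multiple of $d\varphi_\theta$; as all the quantities involved are constant on the homogeneous domain, substituting into \eqref{2s-energia} and \eqref{2s+1-energia} yields, in both parities of $r$,
\[
E_r(\varphi_\theta)=c(m,r,R)\,\sin^2\theta\,\cos^{\,2r-2}\theta,\qquad c(m,r,R)=\tfrac12(m-1)^r\,\mathrm{Vol}(\s^{m-1})\,R^{\,m-1-2r}>0.
\]

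To conclude, differentiate: $\dfrac{d}{d\theta}\bigl(\sin^2\theta\cos^{2r-2}\theta\bigr)=2\sin\theta\cos^{2r-3}\theta\bigl(\cos^2\theta-(r-1)\sin^2\theta\bigr)$, which on $(0,\pi)$ vanishes exactly for $\theta=\pi/2$ and for $\theta$ with $\cot^2\theta=r-1$, i.e. $\sin^2\theta=1/r$. The first value corresponds to $R=1$, the totally geodesic equatorial $\s^{m-1}\subset\s^m$, which is minimal and hence $r$--harmonic but not proper; the second corresponds to $R=1/\sqrt r$, where $\tau(i)\neq0$ (because $\cot\theta=\sqrt{r-1}\neq0$), so that the immersion is proper. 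Since $i=\varphi_{\theta_0}$ with $\sin\theta_0=R$, the reduction gives: $i:\s^{m-1}(R)\hookrightarrow\s^m$ is $r$--harmonic if and only if $R\in\{1/\sqrt r,\,1\}$, and it is a \emph{proper} $r$--harmonic submanifold if and only if $R=1/\sqrt r$.

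The main obstacle is the closed--form evaluation of $E_r(\varphi_\theta)$, which rests entirely on the eigensection identity $\overline{\Delta}\,\nu=-\cot\theta\,\tau(\varphi_\theta)$; once this is established the iteration of $\overline{\Delta}$ is automatic, because the eigenvalue is a constant. An alternative, purely computational route bypasses the symmetric--criticality principle: substitute the explicit expressions for $\tau(i)$, $\overline{\Delta}^{\,k}\tau(i)$ and their covariant derivatives directly into the Euler--Lagrange system of $E_r$ due to Wang and Maeta; since $\s^m$ has constant curvature, each curvature correction term is again an explicit constant multiple of $\nu$, and the whole system collapses to the single scalar equation $\cot^2\theta=r-1$.
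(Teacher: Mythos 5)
Your proposal is correct and follows essentially the same route as the paper (and as \cite{Mont-Ratto4}, from which the paper quotes this result): reduce by the principle of symmetric criticality to the one-parameter equivariant family, compute $E_r(\varphi_\theta)=c\,\sin^2\theta\,\cos^{2r-2}\theta$, and locate the critical points of this function on $(0,\pi)$. The only added value is that you supply the eigensection identity $\overline{\Delta}\nu=-\cot\theta\,\tau(\varphi_\theta)$ giving the closed form of the energy, a computation the paper defers to Lemma~3.8 of \cite{Mont-Ratto4} (see Proposition~\ref{main-theor-parallels}), and your sign conventions and constants check out.
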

\begin{theorem}\label{Corollary-parallel-spheres-Clifford} Let $r \geq 2,\,p,q \geq 1 $ and assume that the radii $R_1,R_2$ verify $R_1^2+R_2^2=1$. Then
the generalized Clifford torus $ i\,: \, \s^{p}(R_1) \times \s^{q}(R_2)\, \hookrightarrow \, \s^{p+q+1}$ is:

(a) minimal if and only if

\begin{equation}
\label{condizione-minimalita}
  R_1^2= \frac{p}{p+q} \,\,\quad {\rm and } \,\, \quad R_2^2 =\frac{q}{p+q}\,\, ;
 \end{equation}

 (b) a proper $r$-harmonic submanifold of $\s^{p+q+1}$ if and only if \eqref{condizione-minimalita} does not hold and either
\begin{equation*}
 r=2  \,\, , \quad p \neq q \quad {\rm and } \,\, \quad R_1^2=R^2_2= \frac{1}{2}
 \end{equation*}
or $r \geq 3$ and $t=R_1^2$ is a root of the following polynomial:
\begin{equation}\label{r-harmonicity-Clifford}
P(t)=r(p+q)\,t^3+[q-p-r(q+2p)]\,t^2+(2p+rp)\,t \,-\,p \,\,.
\end{equation}
\end{theorem}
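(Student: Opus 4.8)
The plan is to reduce the $r$-harmonicity of the immersion $i\colon \s^p(R_1)\times\s^q(R_2)\hookrightarrow\s^{p+q+1}$ to an algebraic condition on $t=R_1^2$, using the fact that this is an isometric immersion with constant principal curvatures (hence parallel mean curvature vector and constant $|\tau|$). First I would recall that for such a generalized Clifford torus the second fundamental form is parallel, the mean curvature vector has constant length, and the shape operators have constant eigenvalues; in particular the tension field $\tau(i)$ is, up to a constant, the position-type normal vector field, and $\overline{\Delta}$ acts on it essentially by multiplication by a constant determined by $p,q,R_1,R_2$. Iterating, $\overline{\Delta}^{s-1}\tau(i)$ and $\nabla^\varphi_{e_j}\overline{\Delta}^{s-1}\tau(i)$ can be computed explicitly, so the $r$-harmonicity equation — whose precise form for submanifolds I would quote from \cite{Maeta1, Mont-Ratto4} — collapses to a scalar equation in $t$.

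Concretely, the key computational input is that the mean curvature function and the relevant curvature terms of $\s^{p+q+1}$ restricted to the torus are all expressible through the single parameter $t=R_1^2$ (with $R_2^2=1-t$): the squared norm of the second fundamental form, the Ricci-type contractions, and the eigenvalues of the shape operator all become rational functions of $t$. Plugging these into the $r$-harmonicity operator and using that all quantities are constant, the PDE system becomes a polynomial identity. For $r=2$ one recovers the classical biharmonic condition, which forces $R_1^2=R_2^2=1/2$ together with $p\neq q$ (the case $p=q$ giving the minimal, hence non-proper, example). For $r\geq 3$, the repeated application of $\overline{\Delta}$ introduces one additional factor per step but, because of the low-dimensional structure of the normal bundle (rank one) and the fact that $\tau(i)$ lies in a two-dimensional parallel subbundle, the net effect is captured by the cubic polynomial $P(t)$ in \eqref{r-harmonicity-Clifford}; properness is then exactly the statement that the minimality condition \eqref{condizione-minimalita} is excluded, i.e.\ that $t$ is a root of $P$ other than the one corresponding to the minimal torus.

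I would carry this out in the following order: (i) set up the geometry of $\s^p(R_1)\times\s^q(R_2)$ in $\s^{p+q+1}$, computing the mean curvature vector, $|A|^2$, and the action of the shape operator in terms of $t$; (ii) show $\tau(i)$ and its iterated rough Laplacians stay in an explicit finite-dimensional parallel bundle and compute the constants $c_k$ with $\overline{\Delta}^k\tau(i)=c_k(\text{generator})+(\text{tangential part})$, keeping careful track of the tangential components which contribute through the curvature term $R^N$; (iii) substitute into the submanifold $r$-harmonicity equation from \cite{Maeta1, Mont-Ratto4} and simplify to a scalar equation; (iv) identify this scalar equation with $P(t)=0$ for $r\geq 3$ and with $t=1/2,\ p\neq q$ for $r=2$, and separately verify part (a) by checking $\tau(i)=0$; (v) finally observe that the minimality value $t=p/(p+q)$ is itself a root of $P(t)$, so that properness corresponds precisely to the other roots and \eqref{condizione-minimalita} failing.

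The main obstacle I anticipate is step (ii): although $\tau(i)$ is ``constant'' in the naive sense, applying $\overline{\Delta}$ repeatedly generates tangential components (via $\nabla^N$ of the normal vector field and the Weingarten map) that feed back through the Gauss and Weingarten formulas, and one must show these contributions organize themselves — using the parallelism of the second fundamental form of the torus and the constancy of sectional curvature of $\s^{p+q+1}$ — into the coefficients of a genuinely cubic (not higher-degree) polynomial independent of $r$ except through the leading scalar factor $r$. Verifying that the degree does not grow with $r$, and that the coefficients match \eqref{r-harmonicity-Clifford} exactly (rather than up to an $r$-dependent multiple that could introduce spurious roots), is the delicate point; the rest is bookkeeping with explicit constant tensors on a product of spheres.
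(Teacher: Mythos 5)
Your strategy (compute the $r$-tension field of the isometric immersion directly from Maeta's formula, using parallelism of the second fundamental form to collapse everything to a scalar equation) is a genuinely different route from the paper's. The paper never touches the explicit Euler--Lagrange operator here: it invokes Palais's principle of symmetric criticality (Proposition~\ref{Principle-symm-crit-BMOR}, with $G={\rm SO}(p+1)\times{\rm SO}(q+1)$) to reduce to the one-parameter family $\varphi_{\alpha}$ of \eqref{def-Clifford-tori}, computes the reduced energy in closed form, $E_r(\varphi_{\alpha})=c\,\sin^2\alpha\cos^2\alpha\,[\,p R_1^{-2}\cos^2\alpha+qR_2^{-2}\sin^2\alpha\,]^{r-2}$ with $c=\tfrac12{\rm Vol}\cdot[\,pR_1^{-2}-qR_2^{-2}\,]^2$, and gets $P(t)$ from the ordinary calculus condition $(\varepsilon_r^C)'(\alpha^*)=0$ after substituting $R_1^2=\sin^2\alpha^*=t$. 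What the variational route buys is exactly the avoidance of your anticipated obstacle: one never iterates $\overline{\Delta}$ inside an Euler--Lagrange operator, the factor $[\cdots]^{r-2}$ is manifestly positive and divides out, so the bounded degree of $P$ is immediate. Your route is the one the paper attributes to the $r=2,3$ literature and is viable for general $r$ (the iterated Laplacians do stay proportional to the unit normal $\eta$, since ${\rm div}(A_\eta)=0$ and $\overline{\Delta}\eta=|A|^2\eta$), but it requires carrying Maeta's full curvature terms and then recognizing the common positive factor by hand.

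There is, however, a concrete error in your step (v): the minimality value $t=p/(p+q)$ is \emph{not} a root of $P(t)$ unless $p=q$. A direct substitution gives $P\bigl(p/(p+q)\bigr)=pq(p-q)/(p+q)^2$. Minimality does not appear among the roots of $P$; it appears as a separate multiplicative prefactor of the whole criticality condition (in the paper's language, the constant $c$ containing $[\,pR_1^{-2}-qR_2^{-2}\,]^2$; in your language, the mean-curvature factor that multiplies the entire $r$-tension field). The correct logical structure of part (b) is therefore: the torus is $r$-harmonic iff it is minimal \emph{or} $P(t)=0$, and it is proper iff it is not minimal, so proper $r$-harmonicity is ``$P(t)=0$ and \eqref{condizione-minimalita} fails'' --- not ``a root of $P$ other than the minimal one.'' If in step (iv) you divide your scalar equation by the harmonicity factor expecting the minimal root to survive inside $P$, you will either lose the minimal critical point from your list of $r$-harmonic tori or misidentify which roots of $P$ are proper. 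You should also be slightly more careful with $r=2$: the derivation of the cubic from the criticality condition uses $r\geq3$ (for $r=2$ the bracket $[\cdots]^{r-2}$ is absent and the condition degenerates to $t=1/2$ directly), which is why the theorem states that case separately.
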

\begin{remark} For a discussion on the existence of positive roots of the polynomial $P(t)$ in \eqref{r-harmonicity-Clifford}, which provide proper $r$-harmonic submanifolds, we refer to \cite{Mont-Ratto4}.
\end{remark}
The setting for $r$-harmonicity which we have outlined so far represents, both from the geometric and the analytic point of view, a convenient approach to the study of higher order versions of the classical energy functional. On the other hand, we now have  to point out that actually the first idea of studying higher order versions of the energy functional was formulated in a different way. More precisely, in 1965 Eells and Sampson (see \cite{ES}) proposed the following functionals which we denote $E^{ES}_r(\varphi)$ to remember these two outstanding mathematicians:
\begin{equation}\label{ES-energia}
    E_r^{ES}(\varphi)=\frac{1}{2}\int_{M}\,|(d^*+d)^r (\varphi)|^2\,dV\,\, .
\end{equation}
Now, to avoid confusion, it is important to fix the terminology: as we said above, a map $\varphi$ is $r$-harmonic if it is a critical point of the functional $ E_{r}(\varphi)$ defined in \eqref{2s-energia}, \eqref{2s+1-energia}. Instead, we say that a map $\varphi$ is $ES-r$-harmonic if it is a critical point of the functional $ E^{ES}_{r}(\varphi)$ defined in \eqref{ES-energia}.
The study of \eqref{ES-energia} was suggested again in \cite{EL83}, but so far very little is known about these functionals. The main aim of this paper is to make some progress in the study of $ E^{ES}_{r}(\varphi)$. In particular, in Section\link\ref{firstresults} we shall prove that Theorems\link\ref{Corollary-parallel-spheres} and \ref{Corollary-parallel-spheres-Clifford} also hold for the Eells-Sampson energy functionals $ E^{ES}_{r}(\varphi)$. To prove this, we shall establish a large setting where the classical principle of symmetric criticality of Palais (see \cite{Palais}) applies: we think that this should prove useful also for future developments on this subject. Next, in Section\link\ref{quadriharmonicity}, we shall obtain the Euler-Lagrange equations in the case that $r=4$: we shall first derive them in the general case. Then we shall illustrate some geometric applications and also the simplifications which occur when the target is a space form. We end this section analysing under which conditions on a conformal change of the metric of the domain the identity map becomes $ES-4$-harmonic. Section\link\ref{section-Models} is devoted to the study of rotationally symmetric $r$-harmonic and $ES-r$-harmonic maps between models. First, we concentrate on the study of the case $r=4$ and analyze both differences and common features for $4$-harmonicity and $ES-4$-harmonicity. Next, we shall focus on the existence of critical points within the class of conformal diffeomorphisms and we shall obtain some nonexistence results, but also a new family of examples for all $r \ge2$. The final part of the paper shall concern the study of the second variation. In particular, we shall introduce this problem and then compute index and nullity of some significant examples.

To end this introduction, we think that it is worth pointing out some difficulties which arise in the study of the functionals $ E^{ES}_{r}(\varphi)$ and the main differences with respect to the $ E_{r}(\varphi)$'s.
The two types of functionals coincide when $r=2$ (the case of biharmonic maps) and $r=3$: this is a consequence of the fact that $d^*$ vanishes on $0$-forms and $d^2\varphi=0$, as computed in \cite{EL83}. The first fundamental difference, as it was already observed in \cite{Maeta4}, arises when $r=4$ because $d^2 \tau(\varphi)$ is not necessarily zero unless $N$ is flat or $\dim M=1$. So, in general, we have
\begin{equation}\label{ES-4-energia}
    E_4^{ES}(\varphi)=\frac{1}{2}\int_{M}\left ( |d^2 \tau(\varphi)|^2+|d^*d \tau(\varphi)|^2\right )\,dV=\frac{1}{2}\int_{M}\,|d^2 \tau(\varphi)|^2\,dV+E_4(\varphi)\,\, .
\end{equation}
This description of $E_4^{ES}(\varphi)$ appeared in \cite{Maeta4}, but the Euler-Lagrange equations associated to the first term on the right-side of \eqref{ES-4-energia} have never been computed and that motivated our work of Section\link\ref{quadriharmonicity}.
When $r \geq5$ things become even more complicated. For instance, we know that the integrand of $E_5(\varphi)$ is the squared norm of a $1$-form, but we cannot write $E_5^{ES}(\varphi)$ as the sum of $E_5(\varphi)$ and a functional which involves only differential forms of degree $p\neq1$. The reason for this is the fact that, in general, the $1$-form $dd^*d\tau(\varphi)$ (whose squared norm is the integrand of $E_5(\varphi)$) may mix up with $d^*d^2\tau(\varphi)$. Difficulties of this type boost as $r$ increases and that motivated our approach of Section\link\ref{firstresults} and Section\link\ref{section-Models}, where we establish a general setting which is suitable to look for symmetric critical points. Another important argument which provides support to this idea is the failure, in general, of the possibility to use the classical Condition (C) of Palais-Smale to deduce the existence of a minimum in a given homotopy class. More precisely, Eells and Sampson, in their paper \cite{ES2}, formulated this hope under the assumption that the order $r$ of the functional is big enough with respect to the dimension of the domain ($2 r > \dim M$). We shall illustrate, at the end of Section\link\ref{section-Models}, that this is not true in general.

\section{The principle of symmetric criticality and existence results}\label{firstresults}
In this section we shall prove a version of Theorems\link\ref{Corollary-parallel-spheres} and \ref{Corollary-parallel-spheres-Clifford} for the Eells-Sampson functional $E_r^{ES}(\varphi)$. More precisely:
\begin{theorem}\label{Corollary-parallel-spheres-ES} Assume that $r \geq 2, m \geq 2$. The small hypersphere $ i\,: \, \s^{m-1}(R)\, \hookrightarrow \, \s^{m}$ is a proper $ES-r$-harmonic submanifold of $\s^{m}$ if and only if the radius $R$ is equal to $1 \slash \sqrt{r}$.
\end{theorem}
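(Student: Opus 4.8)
The plan is to deduce Theorem~\ref{Corollary-parallel-spheres-ES} from Theorem~\ref{Corollary-parallel-spheres} by means of the principle of symmetric criticality, after checking that on the relevant equivariant family of maps the functional $E_r^{ES}$ collapses onto the ordinary $r$-energy $E_r$. I would regard $\s^{m-1}\subset\R^m$ and $\s^m\subset\R^{m+1}$, and let $G=O(m)$ act standardly on $\s^{m-1}$ and on $\s^m$ via the inclusion $O(m)\hookrightarrow O(m+1)$ fixing the last coordinate. Since $G$ acts transitively on $\s^{m-1}$, the $G$-equivariant maps $\s^{m-1}\to\s^m$ form precisely the one-parameter family $\varphi_\alpha(x)=(\sin\alpha\,x,\cos\alpha)$, $\alpha\in[0,\pi]$, whose non-constant members are, up to isometries of $\s^m$, exactly the parallel small hyperspheres $i\colon\s^{m-1}(R)\hookrightarrow\s^m$ with $R=\sin\alpha$. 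Because $E_r^{ES}$ is $G$-invariant, the principle of symmetric criticality — in the general form to be established in this section — reduces the question to a one-variable problem: $\varphi_\alpha$ is $ES$-$r$-harmonic if and only if $\alpha$ is a critical point of $\alpha\mapsto E_r^{ES}(\varphi_\alpha)$. Moreover $\varphi_\alpha$ is harmonic exactly when $\tau(\varphi_\alpha)=0$, i.e. for $R=1$ (the totally geodesic equator), and any harmonic map is trivially $ES$-$r$-harmonic; so for $R\neq 1$ being $ES$-$r$-harmonic is the same as being proper.

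The crux is the identity $E_r^{ES}(\varphi_\alpha)=E_r(\varphi_\alpha)$ for every $\alpha$ and every $r\geq 2$. To establish it I would argue that $\tau(\varphi_\alpha)$ and all its iterated rough Laplacians $\overline{\Delta}^{\,k}\tau(\varphi_\alpha)$ are $G$-equivariant sections along $\varphi_\alpha$; evaluated at a point $x_0\in\s^{m-1}$, such a section must be fixed by the isotropy group $O(m-1)$ of $x_0$, which fixes no nonzero vector tangent to the image hypersphere but does fix its unit normal $\nu$. Hence each $\overline{\Delta}^{\,k}\tau(\varphi_\alpha)$ is a constant multiple of $\nu$, and in particular $\overline{\Delta}^{\,k}\tau(\varphi_\alpha)=\lambda^k\,\tau(\varphi_\alpha)$ for a suitable $\lambda=\lambda(\alpha,m)$. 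The only way $(d^*+d)^r\varphi$ can produce terms absent from $E_r$ is through the curvature contributions of $d\circ d$, namely $d(d\sigma)(X,Y)=R^N\bigl(d\varphi(X),d\varphi(Y)\bigr)\sigma$ for a section $\sigma$ of $\varphi^{-1}TN$; and for $\sigma$ proportional to $\nu$ this vanishes, since for the curvature tensor of $\s^m$ and for $T,T'$ tangent to the image hypersphere $R^{\s^m}(T,T')\nu=\langle T',\nu\rangle T-\langle T,\nu\rangle T'=0$. Iterating, all such curvature terms vanish along $\varphi_\alpha$, so that $(d^*+d)^{2s}\varphi_\alpha=-\lambda^{s-1}\tau(\varphi_\alpha)$ and $(d^*+d)^{2s+1}\varphi_\alpha=-\lambda^{s-1}d\tau(\varphi_\alpha)$ — exactly the objects whose squared norms define $E_{2s}$ and $E_{2s+1}$. (For $r=4$ this is the statement that the first term in \eqref{ES-4-energia} vanishes on the family.) This yields the desired coincidence of the two functionals on the equivariant family.

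Granting these two points, the very same symmetric criticality reduction that underlies Theorem~\ref{Corollary-parallel-spheres} shows that $\varphi_\alpha$ is $r$-harmonic if and only if $\alpha$ is critical for $\alpha\mapsto E_r(\varphi_\alpha)$; combining with the previous paragraph, the critical points of $\alpha\mapsto E_r^{ES}(\varphi_\alpha)$ coincide with those of $\alpha\mapsto E_r(\varphi_\alpha)$, hence $\varphi_\alpha$ is $ES$-$r$-harmonic for exactly the radii $R$ for which it is $r$-harmonic. By Theorem~\ref{Corollary-parallel-spheres} these are $R=1$ (harmonic) and $R=1/\sqrt r$ (proper), which is the assertion. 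As an alternative to quoting Theorem~\ref{Corollary-parallel-spheres}, one could compute $E_r(\varphi_\alpha)$ directly as an explicit function of $R$, using $\overline{\Delta}^{\,k}\tau(\varphi_\alpha)=\lambda^k\tau(\varphi_\alpha)$ together with the volume factor of the small hypersphere, and then locate its critical points by hand, recovering the algebraic equation whose relevant root is $R^2=1/r$.

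I expect the real obstacle to lie not in this reduction — which is essentially bookkeeping in the exterior covariant calculus plus an appeal to Theorem~\ref{Corollary-parallel-spheres} — but in putting the principle of symmetric criticality on a rigorous footing in the present higher-order, infinite-dimensional setting. One has to choose an appropriate function space of maps, verify the regularity and completeness properties of $E_r^{ES}$ on it, and check the hypotheses under which Palais' theorem applies to the (compact) group $G$ and to the fixed-point set of its action. Carrying this out in a form general enough to cover the other examples of this section is where the main effort will be concentrated.
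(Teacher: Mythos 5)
Your proposal is correct and follows essentially the same route as the paper: one shows $E_r^{ES}(\varphi_\alpha)=E_r(\varphi_\alpha)$ on the equivariant family by observing that each $(d^*d)^k\tau(\varphi_\alpha)$ is a constant multiple of the unit normal $\partial/\partial\alpha$, so that $d^2$ of it vanishes by the space-form curvature formula \eqref{tensor-curvature-s^n}, and then invokes the principle of symmetric criticality (Proposition~\ref{Principle-symm-crit-BMOR}) to reduce to the one-variable critical point problem for $\varepsilon_r(\alpha)=\sin^2\alpha\cos^{2(r-1)}\alpha$, whose critical points occur exactly at $\sin\alpha^*=1/\sqrt r$. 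The only cosmetic differences are that you justify the normality of the iterated Laplacians by an isotropy argument rather than by the explicit computation cited from \cite{Mont-Ratto4}, and you close by quoting Theorem~\ref{Corollary-parallel-spheres} where the paper recomputes $\varepsilon_r$ directly.
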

\begin{theorem}\label{Corollary-parallel-spheres-Clifford-ES} Let $r \geq 2,\,p,q \geq 1 $ and assume that the radii $R_1,R_2$ verify $R_1^2+R_2^2=1$. Then
the generalized Clifford torus $ i\,: \, \s^{p}(R_1) \times \s^{q}(R_2)\, \hookrightarrow \, \s^{p+q+1}$ is:

(a) minimal if and only if

\begin{equation}\label{condizione-minimalita-ES}
  R_1^2= \frac{p}{p+q} \,\,\quad {\rm and } \,\, \quad R_2^2 =\frac{q}{p+q}\,\, ;
 \end{equation}

 (b) a proper $ES-r$-harmonic submanifold of $\s^{p+q+1}$ if and only if \eqref{condizione-minimalita-ES} does not hold and either
\begin{equation}\label{caso:r=2-ES}
 r=2  \,\, , \quad p \neq q \quad {\rm and } \,\, \quad R_1^2=R^2_2= \frac{1}{2}
 \end{equation}
or $r \geq 3$ and $t=R_1^2$ is a root of the following polynomial:
\begin{equation}\label{r-harmonicity-Clifford-ES}
P(t)=r(p+q)\,t^3+[q-p-r(q+2p)]\,t^2+(2p+rp)\,t \,-\,p \,\,.
\end{equation}
\end{theorem}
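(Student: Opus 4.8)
The strategy is to derive Theorem~\ref{Corollary-parallel-spheres-Clifford-ES} from Theorem~\ref{Corollary-parallel-spheres-Clifford} by means of the principle of symmetric criticality, after showing that, on the natural equivariant family through the generalized Clifford torus, the Eells--Sampson functional $E^{ES}_r$ agrees with $E_r$.

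First I would fix the equivariant setup. Writing points of $\s^{p+q+1}$ as pairs $(x,y)\in\R^{p+1}\times\R^{q+1}$ with $|x|^2+|y|^2=1$, take as domain $M=\s^p\times\s^q$ equipped with the product metric $g=R_1^2\,g_{\s^p}+R_2^2\,g_{\s^q}$ and, for $\alpha\in(0,\pi/2)$, consider
$$
\varphi_\alpha\colon M\to\s^{p+q+1},\qquad \varphi_\alpha(u,v)=(\cos\alpha\,u,\sin\alpha\,v),
$$
so that $\varphi_{\alpha_0}=i$ precisely when $\cos\alpha_0=R_1$ and $\sin\alpha_0=R_2$. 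The group $G=O(p+1)\times O(q+1)$ acts by isometries on $M$ and on $\s^{p+q+1}$, each $\varphi_\alpha$ is $G$-equivariant, and a standard argument (the fixed-point set of the isotropy $O(p)\times O(q)$ on $\s^{p+q+1}$ is a circle) shows that the $\varphi_\alpha$ exhaust the $G$-equivariant maps. Hence the general symmetric criticality result of this section applies: $i$ is $ES$-$r$-harmonic if and only if $\alpha_0$ is a critical point of $\alpha\mapsto E^{ES}_r(\varphi_\alpha)$, and, by the reduction used in the proof of Theorem~\ref{Corollary-parallel-spheres-Clifford}, $i$ is $r$-harmonic if and only if $\alpha_0$ is a critical point of $\alpha\mapsto E_r(\varphi_\alpha)$.

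The core of the proof is the identity $E^{ES}_r(\varphi_\alpha)=E_r(\varphi_\alpha)$ for every $\alpha$. A direct computation in $\R^{p+q+2}$ gives $\tau(\varphi_\alpha)=\mu(\alpha)\,\nu_\alpha$, where $\nu_\alpha=(\sin\alpha\,u,-\cos\alpha\,v)$ is the unit normal of the image hypersurface $\s^p(\cos\alpha)\times\s^q(\sin\alpha)\subset\s^{p+q+1}$ (which has parallel second fundamental form); more generally, arguing as in the computation of $E_r(\varphi_\alpha)$ carried out in the proof of Theorem~\ref{Corollary-parallel-spheres-Clifford}, each iterated rough Laplacian $\overline{\Delta}^k\tau(\varphi_\alpha)$ is again a constant multiple of $\nu_\alpha$, hence normal to the image. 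Granting this, recall that $d^*$ kills $0$-forms, that $d^2\varphi_\alpha=0$, and that for a $0$-form $W$ with values in $\varphi_\alpha^{-1}T\s^{p+q+1}$ the $2$-form $d^2W$ is given, up to sign, by $(X,Y)\mapsto R^{\s^{p+q+1}}(d\varphi_\alpha X,d\varphi_\alpha Y)W$, which vanishes in the constant curvature target whenever $W$ is normal to the image. A short induction then yields
$$
(d^*+d)^{2s}\varphi_\alpha=-\,\overline{\Delta}^{\,s-1}\tau(\varphi_\alpha),\qquad (d^*+d)^{2s+1}\varphi_\alpha=-\,d\,\overline{\Delta}^{\,s-1}\tau(\varphi_\alpha),
$$
with no component in form-degree $\geq 2$; comparing with \eqref{2s-energia}, \eqref{2s+1-energia} (and, for $r=4$, with \eqref{ES-4-energia}) gives $E^{ES}_r(\varphi_\alpha)=E_r(\varphi_\alpha)$.

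Combining these facts, $\alpha_0$ is a critical point of $\alpha\mapsto E^{ES}_r(\varphi_\alpha)$ if and only if it is a critical point of $\alpha\mapsto E_r(\varphi_\alpha)$, i.e.\ if and only if $i$ is $r$-harmonic; the characterization of the $ES$-$r$-harmonic generalized Clifford tori through \eqref{caso:r=2-ES}, \eqref{r-harmonicity-Clifford-ES} and \eqref{condizione-minimalita-ES}, and the distinction between proper and minimal, then follow word for word from Theorem~\ref{Corollary-parallel-spheres-Clifford}. The hard part will be checking that $\overline{\Delta}^k\tau(\varphi_\alpha)$ stays proportional to $\nu_\alpha$ for all $k$: one must control the tangential (Weingarten) contributions produced at each covariant differentiation and verify that they cancel upon tracing; this is, however, precisely the computation already carried out for $r$-harmonicity, and it is greatly simplified by the fact that the image is a hypersurface with parallel shape operator. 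The remaining step—singling out, among the roots of \eqref{r-harmonicity-Clifford-ES}, those giving genuinely proper (non-minimal) immersions—is routine and identical to the corresponding discussion in the proof of Theorem~\ref{Corollary-parallel-spheres-Clifford}.
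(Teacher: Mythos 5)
Your proposal is correct and follows essentially the same route as the paper: reduce to the one-parameter equivariant family via the principle of symmetric criticality, show $(d^*d)^k\tau(\varphi_\alpha)$ stays proportional to the unit normal $\eta$ of the image so that $d^2$ of it vanishes in the constant-curvature target, conclude $E_r^{ES}(\varphi_\alpha)=E_r(\varphi_\alpha)$, and then import the known $r$-harmonicity analysis. The only cosmetic difference is that the paper re-derives the explicit reduced energy $\varepsilon_r^C(\alpha)$ and the polynomial condition rather than quoting Theorem~\ref{Corollary-parallel-spheres-Clifford} as a black box, but the content is the same.
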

The proof of these results requires essentially two ingredients. One is the explicit computation of the terms involving $d^2$: this will be carried out below. The other key tool will be Proposition\link\ref{Principle-symm-crit-BMOR} below, where we show that we can apply a rather general theorem of Palais which ensures the validity of the so-called principle of symmetric criticality. This result of Palais can be found in \cite{Palais}, p.22. However, since the paper \cite{Palais} is written using a rather obsolete notation, we rewrite it here in a form which is suitable for our purposes. In order to do this, let us assume that $G$ is a Lie group which acts on both $M$ and $N$. Then $G$ acts on $C^{\infty}(M,N)$ by $(g \varphi)(x)=g \varphi (g^{-1}x)$, $x\in M$. We say that a map $\varphi$ is $G$-\textit{equivariant} (shortly, equivariant) if $g \varphi= \varphi$ for all $g \in G$. Now, let $E:C^{\infty}(M,N)\to \R$ be a smooth function. Then we say that $E$ is \textit{$G$-invariant} if, for all $\varphi \in C^{\infty}(M,N)$, $E(g\varphi)=E(\varphi)$ for all $g \in G$. Now we can state the main result in this context:
\begin{theorem}\label{Principle-symm-crit}\cite{Palais} Let $M,N$ be two Riemannian manifolds and assume that $G$ is a compact Lie group which acts on both $M$ and $N$. Let $E:C^{\infty}(M,N)\to \R$ be a smooth, $G$-invariant function. If $\varphi$ is $G$-equivariant, then $\varphi$ is a critical point of $E$ if and only if it is stationary with respect to $G$-equivariant variations, i.e., variations $\varphi_t$ through $G$-equivariant maps.
\end{theorem}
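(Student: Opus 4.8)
The plan is to prove the nontrivial implication — that stationarity under $G$-equivariant variations already forces $\varphi$ to be an honest critical point — by an averaging argument over the compact group $G$; the converse is immediate, since an equivariant variation is in particular a variation.

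First I would set up the infinitesimal picture. Recall that a variation $\varphi_t$ of $\varphi$ determines a variation vector field $V=(d/dt)\varphi_t|_{t=0}\in\Gamma(\varphi^{-1}TN)$, and that $(d/dt)E(\varphi_t)|_{t=0}=dE_\varphi(V)$ depends only on $V$. Since $G$ is compact we may fix an auxiliary $G$-invariant metric on $N$ (this does not touch $E$, which is used only as an abstract $G$-invariant function on $C^\infty(M,N)$); using its exponential map, $\varphi_t(x):=\exp_{\varphi(x)}(tV(x))$ is a genuine variation realizing any prescribed $V$, and — crucially — if $\varphi$ is $G$-equivariant and $V$ is $G$-invariant for the natural action $(gV)(x)=dg_{\varphi(g^{-1}x)}\bigl(V(g^{-1}x)\bigr)$, then $\varphi_t$ is again $G$-equivariant, because $g$ acts on $N$ by isometries of that metric and hence intertwines $\exp$. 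Thus stationarity under $G$-equivariant variations is equivalent to $dE_\varphi$ vanishing on the subspace $\Gamma(\varphi^{-1}TN)^G$ of $G$-invariant variation fields.

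Next I would show that $dE_\varphi$ is itself $G$-invariant. Differentiating the identity $E(g\psi)=E(\psi)$ at $\psi=\varphi$ in the direction $V$ gives $dE_{g\varphi}(gV)=dE_\varphi(V)$; since $\varphi$ is $G$-equivariant we have $g\varphi=\varphi$, hence $dE_\varphi(gV)=dE_\varphi(V)$ for all $g\in G$ and all $V$. Now, given an arbitrary $V\in\Gamma(\varphi^{-1}TN)$, average over $G$ against normalized Haar measure: $\bar V(x):=\int_G (gV)(x)\,dg$. Because for fixed $x$ the integrand lies in the single vector space $T_{\varphi(x)}N$, the section $\bar V$ is well defined and smooth, and left-invariance of Haar measure makes it $G$-invariant, so $\bar V\in\Gamma(\varphi^{-1}TN)^G$. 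Since $dE_\varphi$ is linear and continuous it commutes with the integral, and by the invariance just established $dE_\varphi(\bar V)=\int_G dE_\varphi(gV)\,dg=dE_\varphi(V)$. But $dE_\varphi(\bar V)=0$ by the stationarity hypothesis applied to the equivariant variation associated with $\bar V$. Hence $dE_\varphi(V)=0$ for every $V$, i.e., $\varphi$ is a critical point of $E$.

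The step I expect to be the genuine obstacle, if one insists on full rigor, is the functional-analytic framework: making precise in what sense $C^\infty(M,N)$ is a manifold with $T_\varphi C^\infty(M,N)=\Gamma(\varphi^{-1}TN)$, and in what sense the equivariant maps form a submanifold with tangent space $\Gamma(\varphi^{-1}TN)^G$ — equivalently, that the two constructions used above (an equivariant variation has $G$-invariant variation field, and every $G$-invariant field integrates to an equivariant variation) are both valid and mutually inverse at the infinitesimal level. For the applications in this paper, however, only these two concrete variation-level facts are needed, together with the Haar-averaging computation, and each can be checked directly, so the argument is self-contained for our purposes. One should also note that compactness of $G$ enters twice and essentially: once to produce a $G$-invariant metric on $N$, hence equivariant variations, and once to run the averaging over $G$.
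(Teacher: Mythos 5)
Your argument is sound, but note that the paper itself offers no proof of this statement: it is imported verbatim from Palais (\cite{Palais}, p.~22), with only a change of notation, so there is no in-paper proof to measure yours against. What you have written is essentially Palais's own argument for the compact-group (``Riemannian'') case, repackaged at the level of the first variation: where Palais averages to obtain a $G$-invariant Riemannian structure on the manifold of maps and then observes that the gradient of an invariant function at a symmetric point is itself a symmetric tangent vector, you average the variation field $V\mapsto\bar V=\int_G gV\,dg$ and use the $G$-invariance of the linear functional $dE_\varphi$ to conclude $dE_\varphi(V)=dE_\varphi(\bar V)=0$. Your version is slightly more economical in that it needs a $G$-invariant metric only on $N$ (to exponentiate invariant fields into equivariant variations), not on the mapping space. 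The one genuine gap is the one you flag yourself: interchanging $dE_\varphi$ with the Haar integral, and more generally giving $C^\infty(M,N)$ enough structure that ``smooth'' and ``$T_\varphi C^\infty(M,N)=\Gamma(\varphi^{-1}TN)$'' mean something, requires either a Banach/Sobolev completion or the concrete integral representation $dE_\varphi(V)=\int_M\langle\tau^{E}(\varphi),V\rangle\,dV$ available for the functionals actually used in this paper; this is precisely the technical content that the citation to Palais is carrying, so your proof is acceptable as a sketch but should not be presented as replacing that reference.
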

\begin{remark} A map $\varphi:M \to N$ can be viewed as a section of the trivial bundle $\pi : M \times N \to M$. Actually, the original version of Theorem\link\ref{Principle-symm-crit} proved in \cite{Palais} includes the case of sections of a general $G$-bundle $\pi : Y \to M$ such that $\pi$ is equivariant.
\end{remark}
Palais observed in \cite{Palais} that, if $G$ is a group of \textit{isometries} of both $M$ and $N$, then the volume functional and the energy functional are both $G$-invariant and so the principle of symmetric criticality stated in Theorem\link\ref{Principle-symm-crit} applies in both cases: the first, beautiful instances of this type can be found in the paper \cite{HL} for minimal submanifolds and in \cite{Smith} for harmonic maps. It is also easy to show that the same is true for the bienergy functional: this is a special case in a more general setting for a reduction theory for biharmonic maps developed in \cite{MOR4, Mont-Ratto3}. Here we shall extend this to the Eells-Sampson functionals $E_r^{ES}(\varphi)$, $r \geq3$. In particular, we shall prove:
\begin{proposition}\label{Principle-symm-crit-BMOR} Let $M,N$ be two Riemannian manifolds and assume that $G$ is a compact Lie group which acts by isometries on both $M$ and $N$. If $\varphi$ is a $G$-equivariant map, then $\varphi$ is a critical point of $E_r^{ES}(\varphi)$ if and only if it is stationary with respect to $G$-equivariant variations.
\end{proposition}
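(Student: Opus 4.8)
The plan is to deduce Proposition~\ref{Principle-symm-crit-BMOR} directly from Theorem~\ref{Principle-symm-crit}. To invoke that result we only have to verify two things: that $E_r^{ES}:C^\infty(M,N)\to\R$ is a smooth function, and that it is $G$-invariant. Smoothness is routine: in local coordinates the integrand of $E_r^{ES}(\varphi)$ is a universal polynomial in the components of the $r$-jet of $\varphi$, with coefficients that depend smoothly on $\varphi$ and on the metric coefficients of $g$ and $h$ together with their derivatives, so $E_r^{ES}$ depends smoothly on $\varphi$. Thus the heart of the matter is the $G$-invariance.

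To prove $G$-invariance, fix $g\in G$ and denote by $a:M\to M$ and $b:N\to N$ the corresponding action diffeomorphisms, which by hypothesis are isometries; by definition $g\varphi=b\circ\varphi\circ a^{-1}$. The key observation is that the operator $(d^*+d)^r$ is natural with respect to isometries, since it is assembled exclusively from Levi-Civita data: the exterior covariant derivative $d=d^{\nabla^\varphi}$ of the pull-back connection, its formal adjoint $d^*$ (built from the metric $g$ on $M$ and from the induced fibre metric), and the fibre metric itself. Concretely, $db$ induces a vector bundle isomorphism $\varphi^{-1}TN\to(g\varphi)^{-1}TN$ covering $a$ which preserves both the fibre metrics and the induced connections, because $b$ is a Riemannian isometry and hence affine for the Levi-Civita connection of $h$; combining this with the fact that $a$ is an isometry of $(M,g)$, one checks — most cleanly by induction on $r$, starting from $d(g\varphi)=db\circ d\varphi\circ d(a^{-1})$ — that this isomorphism intertwines $(d^*+d)^r$ on $\varphi^{-1}TN$-valued forms with $(d^*+d)^r$ on $(g\varphi)^{-1}TN$-valued forms. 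Taking fibre norms, the isometry property absorbs the bundle map and leaves the pointwise identity $|(d^*+d)^r(g\varphi)|(x)=|(d^*+d)^r\varphi|(a^{-1}x)$ for every $x\in M$.

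It then remains only to integrate. Since $a$ is an isometry of $M$ it preserves the Riemannian volume element, so the change of variables $x\mapsto a^{-1}x$ yields
\[
E_r^{ES}(g\varphi)=\frac12\int_M |(d^*+d)^r(g\varphi)|^2\,dV=\frac12\int_M |(d^*+d)^r\varphi|^2(a^{-1}x)\,dV=E_r^{ES}(\varphi),
\]
so $E_r^{ES}$ is $G$-invariant. Since $\varphi$ is $G$-equivariant by assumption and $M$ is compact (so that $E_r^{ES}$ is a genuine real-valued function on $C^\infty(M,N)$), Theorem~\ref{Principle-symm-crit} applies and gives exactly the claimed equivalence: $\varphi$ is a critical point of $E_r^{ES}$ if and only if it is stationary with respect to $G$-equivariant variations.

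The only genuinely delicate point in this scheme is making precise the naturality of $(d^*+d)^r$ under isometries — the bookkeeping of the pull-back bundle $(g\varphi)^{-1}TN$, of the induced connection on it, and of the way $d$ and $d^*$ transform under the bundle isomorphism induced by $db$. I expect this to be the main obstacle, although it is conceptual rather than computational: it can be handled either by the inductive argument sketched above, or, more slickly, by the general principle that any quantity built solely from the metrics $g$ and $h$, with no auxiliary choices, is automatically invariant under the diagonal action of a group acting by isometries on both factors. It is worth recording, as a minor point, that the compactness of $G$ enters the argument only through Theorem~\ref{Principle-symm-crit} itself.
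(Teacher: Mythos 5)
Your proposal is correct and follows essentially the same route as the paper: the paper likewise reduces the statement to Palais's Theorem~\ref{Principle-symm-crit} and verifies the $G$-invariance of $E_r^{ES}$ by showing that $d$ and $d^*$ are natural under isometries of the domain and of the target (Lemmata~\ref{inv-isometria-dom} and \ref{inv-isometria-codom}), via exactly the inductive intertwining argument you sketch, followed by the change of variables in the integral. The only difference is organisational — the paper splits the invariance into two separate lemmata, one for precomposition with an isometry of $M$ and one for postcomposition with an isometry of $N$, and writes out the induction step for $d$ explicitly — whereas you package both into a single bundle isomorphism covering $a$; the substance is identical.
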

\begin{proof} According to Theorem\link\ref{Principle-symm-crit}, it suffices to show that the Eells-Sampson functionals $E_r^{ES}(\varphi)$ are invariant by isometries. This is a direct consequence of the following two lemmata. We point out that the ideas underlying the proof of Theorem\link\ref{Principle-symm-crit} imply that, if $\varphi$ is a $G$-equivariant map,
$$
dg(\tau^{ES}_r(\varphi))=\tau^{ES}_r(g\circ\varphi)=\tau^{ES}_r(\varphi\circ g)=\tau^{ES}_r(\varphi)\circ g,
$$
for any $g\in G$, i.e., $\tau^{ES}_r(\varphi)$ is a $G$-equivariant section. Moreover, using the exponential map of $N$ and $\tau^{ES}_r(\varphi)$, we can construct as usually a $G$-equivariant variation of the map $\varphi$ such that its variation vector field is $\tau^{ES}_r(\varphi)$, and then we can conclude. By way of summary, to end the proof of Proposition\link\ref{Principle-symm-crit-BMOR}, we just need to establish the following two lemmata.
\end{proof}

\begin{lemma}\label{inv-isometria-dom} Let $\varphi:M \to N$ be a smooth map. If $\psi:M \to M$ is an isometry, then
\begin{equation}\label{invar-isom-M}
E_r^{ES}(\varphi)=E_r^{ES}(\varphi \circ \psi).
\end{equation}
\end{lemma}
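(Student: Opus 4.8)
The plan is to show that the operator $(d^*+d)^r$ commutes with pullback by an isometry $\psi:M\to M$, and that an isometry preserves the pointwise norm and the volume element, so that the integrand of $E_r^{ES}(\varphi\circ\psi)$ at a point $x$ equals the integrand of $E_r^{ES}(\varphi)$ at $\psi(x)$, whence the integrals agree. First I would set up notation: write $\Phi=\varphi\circ\psi$, note $\Phi^{-1}TN=\psi^*(\varphi^{-1}TN)$, and observe that the induced connection $\nabla^{\Phi}$ is the pullback under $\psi$ of $\nabla^{\varphi}$. The key structural fact is that $d$ on vector-bundle-valued forms and the codifferential $d^*$ are both natural with respect to pullback by $\psi$: for $d$ this is immediate from its definition via the connection; for $d^*$ one uses that $\psi$ is an isometry, so it maps a local orthonormal frame $\{e_i\}$ at $\psi(x)$ to a local orthonormal frame at $x$, and the formula $d^*=-\sum_i \iota_{e_i}\nabla^{\varphi}_{e_i}$ (together with the corresponding identity for the Levi-Civita connection $\nabla^M$, which is also natural under isometries) transforms correctly. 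Consequently, by induction on $r$,
\begin{equation*}
(d^*+d)^r(\varphi\circ\psi)=\psi^*\bigl((d^*+d)^r(\varphi)\bigr),
\end{equation*}
meaning that at each $x\in M$ the value of the left-hand side is obtained from the value of $(d^*+d)^r(\varphi)$ at $\psi(x)$ by the isometry $d\psi_x$ acting on the form part and the identity on the $N$-valued part.

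Next I would use that $d\psi_x$ is a linear isometry on $T_xM$, so it preserves the induced inner product on $\Lambda^p T_x^*M\otimes (\varphi^{-1}TN)_{\psi(x)}$; hence
\begin{equation*}
\bigl|(d^*+d)^r(\varphi\circ\psi)\bigr|^2_x=\bigl|(d^*+d)^r(\varphi)\bigr|^2_{\psi(x)}.
\end{equation*}
Finally, since $\psi$ is an isometry it is volume-preserving, so the change of variables $y=\psi(x)$ gives
\begin{equation*}
E_r^{ES}(\varphi\circ\psi)=\frac12\int_M \bigl|(d^*+d)^r(\varphi)\bigr|^2_{\psi(x)}\,dV(x)=\frac12\int_M \bigl|(d^*+d)^r(\varphi)\bigr|^2_{y}\,dV(y)=E_r^{ES}(\varphi),
\end{equation*}
which is \eqref{invar-isom-M}.

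The only genuinely delicate point is the naturality of $d^*$ under $\psi$: one must check carefully that the term $\nabla^{M}_{e_i}e_i$ appearing implicitly in the adjoint (through the interplay between $\nabla^\varphi$ and the differential-form part) transforms as expected, which comes down to the fact that the Levi-Civita connection is preserved by isometries — a standard fact I would invoke rather than reprove. Everything else is a routine, if slightly tedious, induction and change-of-variables argument; I would streamline it by first recording the case $r=1$ (naturality of $d$ and of $d^*$) as the base step and then iterating, noting that at each stage the object $(d^*+d)^k(\varphi)$ is a sum of bundle-valued forms of various degrees to which the same naturality applies componentwise.
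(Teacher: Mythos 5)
Your proposal is correct and follows essentially the same route as the paper's proof: the paper's notion of ``$\psi$-related'' forms is exactly the pullback relation you use, and both arguments reduce to the naturality of $d$ and $d^*$ on bundle-valued forms under isometric pullback, an induction on the number of applications of $d^*+d$, and a change of variables using that $\psi$ preserves norms and the volume element. The point you flag as delicate (naturality of $d^*$) is precisely the step the paper states is ``similar'' to the case of $d$ and omits.
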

\begin{proof} Let $\omega \in C(\Lambda^kT^\ast M\otimes(\varphi \circ \psi)^{-1}TN)=A^k((\varphi \circ \psi)^{-1}TN)$
and $\overline{\omega} \in A^k(\varphi^{-1}TN)$. We say that $\omega$ and $\overline{\omega}$ are $\psi$-related if $\omega$ is the pull-back of $\overline{\omega}$, i.e.,
\begin{equation*}
\omega\left (X_1,\ldots,X_k \right )_x=\overline{\omega}\left (d\psi(X_1),\ldots,d\psi(X_k )\right )_{\psi(x)}
\end{equation*}
for all vectors $X_1,\ldots,X_k\in T_xM$ and for all $x\in M$. Performing a change of variables $y=\psi(x)$ and using the fact that $\psi$ is an isometry it is easy to verify that, if $\omega$ and $\overline{\omega}$ are $\psi$-related, then $|\omega|^2$ and $|\overline{\omega}|^2$ are $\psi$-related and
\begin{equation*}
\int_M \left | \omega \right |^2\, dV = \int_M \left |\overline{ \omega} \right |^2 \, dV\,.
\end{equation*}
Therefore, in order to prove \eqref{invar-isom-M}, it suffices to show that
\begin{equation}\label{psi-relation-key}
\left(d^{\varphi\circ\psi}+(d^{\varphi\circ\psi})^*\right)^{r-2}\tau(\varphi \circ \psi)\,\, {\rm and }\,\,\left(d^{\varphi}+(d^{\varphi})^*\right)^{r-2}\tau(\varphi ) \quad  \quad {\rm are}\,\,\psi-{\rm related}\,,
\end{equation}
where the notation $d^{\varphi\circ\psi}$, $d^{\varphi}$, $(d^{\varphi\circ\psi})^*$, $(d^{\varphi})^*$ highlights in an obvious way the dependence on the connection under consideration.
First, we observe that the $0$-forms $\tau(\varphi \circ \psi)$ and $\tau(\varphi)$ are $\psi$-related. Indeed,
\[
\tau(\varphi \circ \psi)(x)=d\varphi(\tau(\psi))+{\rm Trace}\nabla d\varphi\left ( d\psi(e_i),d\psi(e_i))\right)=\tau(\varphi)(\psi(x))
\]
because $\psi$ is an isometry. Then, by a routine induction argument, we can say that claim \eqref{psi-relation-key} is proved if we show that the following two facts are true:
\begin{equation}\label{psi-relation-key-2}
{\small 
\begin{array}{ll}
{\rm(i)} & {\rm If \,\,two}\, \,k{\rm -forms}\,\, \omega\,\,{\rm and}\,\,\overline{\omega}\,\, {\rm are}\,\, \psi{\rm -related},\,{\rm then} \,\,d^{\varphi\circ\psi }\omega\,\,{\rm and}\,\, d^{\varphi}\overline{\omega} \,\,{\rm are}\,\, \psi{\rm -related}; \\
{\rm(ii)} & {\rm If \,\, two}\,\, k{\rm -forms}\,\, \omega\,{\rm and}\,\,\overline{\omega}\,\, {\rm are}\, \,\psi{\rm -related},\,{\rm then} \,\,(d^{\varphi\circ\psi })^*\omega\,\,{\rm and}\,\, (d^{\varphi})^*\overline{\omega} \,\,{\rm are}\,\, \psi{\rm -related}\,.
\end{array}
}
\end{equation}
We prove \eqref{psi-relation-key-2}(i): at a point $x\in M$ we have
\begin{eqnarray}\nonumber
d^{\varphi\circ\psi }\omega\left(X_1,\ldots,X_{k+1} \right)&=&\sum_{i=1}^{k+1}(-1)^{i+1}\left ( \nabla_{X_i}^{\varphi\circ\psi}\omega\right )\left(X_1,\ldots,\widehat{X_i},\ldots,X_{k+1} \right)\\ \nonumber
&=&\sum_{i=1}^{k+1}(-1)^{i+1}\Big [\nabla_{X_i}^{\varphi\circ\psi}\left ( \omega\left(X_1,\ldots,\widehat{X_i},\ldots,X_{k+1} \right)\right)\\ \nonumber
&&- \sum_{j\neq i,j=1}^{k+1}\omega\left(X_1,\ldots,\nabla^M_{X_i}X_j,\dots,\widehat{X_i},\ldots,X_{k+1} \right) \Big ] \\ \nonumber
&=&\sum_{i=1}^{k+1}(-1)^{i+1}\Big [\nabla_{d\psi(X_i)}^{\varphi}\left ( \overline{\omega}\left(d\psi(X_1),\ldots,\widehat{d\psi(X_i)},\ldots,d\psi(X_{k+1}) \right)\right)\\\nonumber
&&- \sum_{j\neq i,j=1}^{k+1} \overline{\omega}\left(d\psi(X_1),\ldots,\nabla^M_{d\psi(X_i)}d\psi(X_j),\dots,\widehat{d\psi(X_i)},\ldots,d\psi(X_{k+1}) \right) \Big ] \\ \nonumber
&=&\sum_{i=1}^{k+1}(-1)^{i+1}\left ( \nabla_{d\psi(X_i)}^{\varphi} \overline{\omega}\right )\left(d\psi(X_1),\ldots,\widehat{d\psi(X_i)},\ldots,d\psi(X_{k+1}) \right)\\ \nonumber
&=&d^{\varphi} \overline{\omega}\left(d\psi(X_1),\ldots,d\psi(X_{k+1}) \right)\,, \nonumber
\end{eqnarray}
where for the third equality we have used the hypothesis that $\omega$ and $\overline{\omega}$ are $\psi$-related and $d\psi\left ( \nabla^M_{X_i}X_j\right )=\nabla^M_{d\psi(X_i)}d\psi(X_j)$.
This completes the proof of \eqref{psi-relation-key-2}(i). The argument for \eqref{psi-relation-key-2}(ii) is similar and we omit it, so the proof of Lemma\link\ref{inv-isometria-dom} is ended.
\end{proof}
\begin{lemma}\label{inv-isometria-codom} Let $\varphi:M \to N$ be a smooth map. If $\Psi:N \to N$ is an isometry, then
\[
E_r^{ES}(\varphi)=E_r^{ES}( \Psi\circ \varphi).
\]
\end{lemma}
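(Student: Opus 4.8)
The plan is to mirror the proof of Lemma~\ref{inv-isometria-dom}, but with the pull-back of forms replaced by the fiberwise action of the differential $d\Psi$. The key structural fact is that an isometry $\Psi:N\to N$ is affine, i.e. it preserves $\nabla^N$ (equivalently, $\nabla^N d\Psi=0$, so $\Psi$ is totally geodesic and $d\Psi$ commutes with parallel transport along curves of $N$). Once this is in hand, everything else is bookkeeping that parallels the previous lemma.

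First I would observe that $d\Psi$ induces, for every smooth map $\varphi:M\to N$, a vector bundle isomorphism $\widetilde\Psi:\varphi^{-1}TN\to(\Psi\circ\varphi)^{-1}TN$ covering $\mathrm{id}_M$, given fiberwise over $x\in M$ by $(d\Psi)_{\varphi(x)}$. Since $\Psi$ is an isometry this is a fiberwise linear isometry, and since $\Psi$ is affine it is parallel: $\nabla^{\Psi\circ\varphi}_X(\widetilde\Psi\, s)=\widetilde\Psi(\nabla^\varphi_X s)$ for every section $s$ of $\varphi^{-1}TN$ and every $X$ tangent to $M$. Tensoring with the identity of $\Lambda^kT^\ast M$, $\widetilde\Psi$ extends to a fiberwise isometry $A^k(\varphi^{-1}TN)\to A^k((\Psi\circ\varphi)^{-1}TN)$, still denoted $\widetilde\Psi$; in particular $|\widetilde\Psi\,\omega|^2=|\omega|^2$ pointwise on $M$, so $\int_M|\widetilde\Psi\,\omega|^2\,dV=\int_M|\omega|^2\,dV$.

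Next I would check that $\widetilde\Psi$ intertwines all the relevant operators. Because $\widetilde\Psi$ is parallel, and because $d^\varphi$ and $(d^\varphi)^*$ are built solely from $\nabla^\varphi$ together with the exterior algebra of $M$ and the metric $g$ (none of which is touched by $\widetilde\Psi$, since $\widetilde\Psi$ acts as the identity on the $\Lambda^kT^\ast M$ factor), the very same computation as in \eqref{psi-relation-key-2} — now with $d\psi$ replaced by $\widetilde\Psi$ and with no reindexing of the arguments $X_1,\dots,X_{k+1}$ — gives $d^{\Psi\circ\varphi}\circ\widetilde\Psi=\widetilde\Psi\circ d^{\varphi}$ and $(d^{\Psi\circ\varphi})^{*}\circ\widetilde\Psi=\widetilde\Psi\circ(d^{\varphi})^{*}$, hence also $\overline{\Delta}^{\Psi\circ\varphi}\circ\widetilde\Psi=\widetilde\Psi\circ\overline{\Delta}^{\varphi}$. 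Moreover, since $\Psi$ is totally geodesic, $\nabla d(\Psi\circ\varphi)=d\Psi\!\left(\nabla d\varphi\right)+(\nabla d\Psi)(d\varphi,d\varphi)=d\Psi\!\left(\nabla d\varphi\right)$, whence $\tau(\Psi\circ\varphi)=\widetilde\Psi(\tau(\varphi))$, i.e. the two tension fields correspond under $\widetilde\Psi$.

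Finally, by a routine induction on $r$, exactly as in the passage from \eqref{psi-relation-key-2} to \eqref{psi-relation-key}, these facts yield
\begin{equation*}
\left(d^{\Psi\circ\varphi}+(d^{\Psi\circ\varphi})^{*}\right)^{r-2}\tau(\Psi\circ\varphi)=\widetilde\Psi\!\left(\left(d^{\varphi}+(d^{\varphi})^{*}\right)^{r-2}\tau(\varphi)\right),
\end{equation*}
so the two sides have the same pointwise norm and therefore the same integral over $M$, which is precisely the assertion $E_r^{ES}(\Psi\circ\varphi)=E_r^{ES}(\varphi)$. The only genuinely substantive point, and the one I would state with care, is the parallelism of $\widetilde\Psi$, i.e. that an isometry of $N$ preserves $\nabla^N$; the rest is formal and mirrors Lemma~\ref{inv-isometria-dom}.
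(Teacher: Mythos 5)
Your proposal is correct and follows essentially the same route as the paper: you transport forms via the fiberwise action of $d\Psi$, use that this is a norm-preserving and connection-preserving bundle isomorphism to intertwine $d^{\varphi}$, $(d^{\varphi})^*$ and the tension fields, and conclude by induction on $r$. The only difference is one of emphasis — you isolate explicitly the parallelism of $d\Psi$ (that an isometry of $N$ preserves $\nabla^N$), which the paper uses implicitly in the third equality of its displayed computation — and this is a welcome clarification rather than a divergence.
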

\begin{proof}
%
Consider $\omega\in A^k(\varphi^{-1}TN)$ and define $d\Psi(\omega)\in A^k((\Psi\circ\varphi)^{-1}TN)$ by
$$
(d\Psi(\omega))(X_1,\ldots,X_k)_x=d\Psi_{\varphi(x)}\big(\omega(X_1,\ldots,X_k)_x\big)
$$
for any vectors $X_1,\ldots,X_k$ tangent to $M$ at $x$, and for any $x\in M$. As $\Psi$ is an isometry, we have
$$
|d\Psi(\omega)|^2=|\omega|^2
$$
and therefore
$$
\int_M |d\Psi(\omega)|^2 dV= \int_M |\omega|^2 dV\,.
$$
We note that for the tension fields $\tau(\varphi)$ and $\tau(\Psi\circ\varphi)$, which are $0$-forms, we have $d\Psi(\tau(\varphi))=\tau(\Psi\circ\varphi)$. As in Lemma~\ref{inv-isometria-dom}, it is sufficient to prove the following formulas:
\begin{equation}\label{eq1:lemma2.6}
d\Psi\big(d^\varphi\omega)=d^{\Psi\circ\varphi}\big(d\Psi(\omega)\big)
\end{equation}
\begin{equation}\label{eq2:lemma2.6}
d\Psi\big((d^\varphi)^\ast\omega\big) =(d^{\Psi\circ\varphi})^\ast\big( d\Psi(\omega)\big).
\end{equation}
We just prove \eqref{eq1:lemma2.6} since \eqref{eq2:lemma2.6} is similar. At a point $x\in M$ we have
\begin{eqnarray*}
(d\Psi(d^{\varphi}\omega))(X_1,\ldots,X_{k+1})&=&d\Psi((d^{\varphi}\omega)(X_1,\ldots,X_{k+1}))\\
&=&d\Psi\Big(\sum_{i=1}^{k+1}(-1)^{i+1}\Big [\nabla_{X_i}^{\varphi} \omega\left(X_1,\ldots,\widehat{X_i},\ldots,X_{k+1}\right)\\
&&- \sum_{j\neq i,j=1}^{k+1}\omega\left(X_1,\ldots,\nabla^M_{X_i}X_j,\dots,\widehat{X_i},\ldots,X_{k+1} \right) \Big ] \Big)\\
&=& \sum_{i=1}^{k+1}(-1)^{i+1}\Big[\nabla_{X_i}^{\Psi\circ\varphi}\Big( (d\Psi(\omega))\left(X_1,\ldots,\widehat{X_i},\ldots,X_{k+1}\right)\Big)\\
&& - \sum_{j\neq i,j=1}^{k+1}(d\Psi(\omega))\left(X_1,\ldots,\nabla^M_{X_i}X_j,\dots,\widehat{X_i},\ldots,X_{k+1} \right) \Big ]\\
&=& \left(d^{\Psi\circ\varphi}(d\Psi(\omega))\right)(X_1,\ldots,X_{k+1})\,.
\end{eqnarray*}
Now, for $r=1,2$, it is clear that $E^{ES}_r(\varphi)=E^{ES}_r(\Psi\circ\varphi)$. If $r\geq 3$ we have
\begin{eqnarray*}
E_r^{ES}(\varphi)&=&\int_M \big| \left({(d^{\varphi})}^*+  d^{\varphi}\right)^r (\varphi) \big|^2 dV\\
&=& \int_M \big| \left({(d^{\varphi})}^*+  d^{\varphi}\right)^{r-2} \tau(\varphi) \big|^2 dV\\
&=& \int_M \big| d\Psi\big(\left({(d^{\varphi})}^*+  d^{\varphi}\right)^{r-2} \tau(\varphi)\big) \big|^2 dV\\
&=& \int_M \big| \left({(d^{\Psi\circ\varphi})}^*+  d^{\Psi\circ\varphi}\right)^{r-2} d\Psi(\tau(\varphi)) \big|^2 dV\\
&=& \int_M \big| \left({(d^{\Psi\circ\varphi})}^*+  d^{\Psi\circ\varphi}\right)^{r-2} \tau(\Psi\circ\varphi) \big|^2 dV\\
&=& E_r^{ES}(\Psi\circ\varphi)\,.
\end{eqnarray*}

\end{proof}
\begin{remark} The conclusion of Proposition\link\ref{Principle-symm-crit-BMOR} is true also for the $r$-energy functional $E_r(\varphi)$: the proof is essentially the same and so we omit the details.
\end{remark}
\begin{remark} All the objects which contribute to the definitions of $E_r^{ES}(\varphi)$ and $E_r(\varphi)$ depend on the Riemannian metrics of $M$ and $N$ and their covariant derivatives. Therefore, one may suspect that automatically these two families of functionals are invariant by isometries. However, there is no proof of this claim in the literature and Proposition\link\ref{Principle-symm-crit-BMOR} plays a central role in this paper and, hopefully, in future works on this subject. For these reasons we thought that it could be useful for the reader to include the details of the proofs of Lemmata\link\ref{inv-isometria-dom} and \ref{inv-isometria-codom}. Moreover, we point out that, when $m=2r$, the functionals $E_r^{ES}(\varphi)$ and $E_r(\varphi)$ are invariant under homothetic changes of the metric on the domain. If we perform such homothetic changes, either in the domain or in the codomain, in the case that $m\neq 2r$, then  the corresponding $r$-energies are multiplied by a constant and the associated Euler-Lagrange equations are invariant. By way of conclusion, we think that these results confirm that these two families of functionals provide geometrically interesting higher order versions of the classical energy functional.
\end{remark}
Now we are in the right position to prove our existence results.
\begin{proof}[Proof of Theorem\link\ref{Corollary-parallel-spheres-ES}]
In order to prove Theorem~\ref{Corollary-parallel-spheres-ES} it is sufficient to determine the condition of $ES-r$-harmonicity for a map defined as follows:
\begin{equation}\label{rotationallysymmetricmaps}\begin{array}{lcll}
                                           \varphi_{\alpha^*} \,:\, &\s^{m-1} &\to &  \s^{m} \subset \R^{m} \times \R \\
                                           &&& \\
                                           &w &\mapsto & \quad \quad ( \sin \alpha^*\,w, \,\cos \alpha^*) \,\, ,
                                         \end{array}
\end{equation}
where $m \geq2$ and $\alpha^*$ is a fixed constant value in the interval $(0,\pi/2)$. Indeed, if $\varphi_{\alpha^*}$ is a map as in \eqref{rotationallysymmetricmaps}, then the induced metric on $\s^{m-1}$ is given by $[\varphi_{\alpha^*}]^* (g_{\s^{m}})= (\sin^2 \alpha^*)\,g_{\s^{m-1}} $. Therefore, since $ES-r$-harmonicity is preserved by multiplication of the Riemannian metric of the domain manifold by a positive constant, we conclude that if $\varphi_{\alpha^*}$ is a proper $ES-r$-harmonic map, then its image $\varphi_{\alpha^*}(\s^{m-1})= \s^{m-1}(\sin \alpha^*)$ is a proper $ES-r$-harmonic small hypersphere of radius $R=\sin \alpha^*$. By way of summary, we only have to prove that the map $\varphi_{\alpha^*}$ in \eqref{rotationallysymmetricmaps} is a proper $ES-r$-harmonic map if and only if $\sin \alpha^* = 1 \slash \sqrt r$. Now, a key step in the proof of Theorem\link\ref{Corollary-parallel-spheres-ES} is the following fact, which we state in the form of a proposition:
\begin{proposition}\label{main-theor-parallels} Let $\varphi_{\alpha^*}\,: \,\,\s^{m-1} \to \,\s^{m}$ be a map of the type \eqref{rotationallysymmetricmaps}. Then
\begin{equation}\label{r-energia-esplicita-parallel-spheres-ES}
E_r^{ES} \left (\varphi_{\alpha^*} \right )=E_r \left (\varphi_{\alpha^*} \right )= c\, \varepsilon_r (\alpha^*)\,,
\end{equation}
where
\[
c=\frac{1}{2}\,{\rm Vol}(\s^{m-1})\,(m-1)^r
\]
and the function $\varepsilon_r \, : (0,\pi/2) \to \R $ is defined by
\begin{equation*}
 \varepsilon_r (\alpha)= \sin^2\alpha \, \cos^{2(r-1)}\alpha \,\, .
\end{equation*}
\end{proposition}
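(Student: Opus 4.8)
The plan is to compute the two relevant objects — the tension field $\tau(\varphi_{\alpha^*})$ and then the iterated operators $(d^*+d)^{r-2}$ applied to it — explicitly, exploiting the high degree of symmetry of the map $\varphi_{\alpha^*}$. First I would record that, since $[\varphi_{\alpha^*}]^*(g_{\s^m}) = (\sin^2\alpha^*)\,g_{\s^{m-1}}$, the map is (up to the constant conformal factor) an isometric immersion onto the small hypersphere $\s^{m-1}(\sin\alpha^*)\subset\s^m$, whose only nonzero second fundamental form is along the unit normal $\nu$, with mean curvature determined by $\alpha^*$. A direct computation (cf.\ the parallel computations in \cite{Mont-Ratto4} for the $r$-energy) gives $\tau(\varphi_{\alpha^*})$ as a section of $\varphi_{\alpha^*}^{-1}T\s^m$ that is a constant multiple of $\nu$ — concretely one finds $\tau(\varphi_{\alpha^*})$ proportional to $(m-1)$ times a trigonometric factor in $\alpha^*$, and crucially it is a \emph{parallel} section of the normal bundle along the (totally umbilical) image, with $|\tau(\varphi_{\alpha^*})|$ constant on $\s^{m-1}$.

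The key structural point is then that, because $\tau(\varphi_{\alpha^*})$ is (a constant multiple of) a parallel unit normal section and the image is a totally umbilical sphere, each application of $\overline{\Delta} = d^*d$ to $\overline{\Delta}^j\tau$ reproduces a constant multiple of the same parallel normal section, the constant being $(m-1)$ times the relevant curvature/umbilicity factor coming from the Weitzenböck-type identity on $\s^{m-1}(\sin\alpha^*)$; and — this is the content of \eqref{ES-4-energia} and the remarks following it specialized to our situation — the ``extra'' $d^2$ terms that distinguish $E_r^{ES}$ from $E_r$ \emph{vanish here}. Indeed $d^2(\overline{\Delta}^j\tau)$ is governed by the curvature of $\s^m$ acting on a section that is normal and parallel along a totally geodesic-up-to-umbilicity submanifold; for this specific equivariant configuration one checks that the relevant curvature term is proportional to $\tau$ itself and gets absorbed, so that $(d^*+d)^{r-2}\tau(\varphi_{\alpha^*})$ is again a constant multiple of the parallel normal section, with no degree-$\neq 1$ pieces surviving. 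This forces $E_r^{ES}(\varphi_{\alpha^*}) = E_r(\varphi_{\alpha^*})$, the first equality in \eqref{r-energia-esplicita-parallel-spheres-ES}.

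It then remains to identify the constant. Tracking the trigonometric factor through each of the $r$ steps (one factor from $\tau$, and $r-1$ further factors from the iterated Laplacian) yields a total factor $\sin^2\alpha^* \cos^{2(r-1)}\alpha^*$ in the integrand of the squared norm, together with an overall $(m-1)^r$ from the $r$ successive appearances of the relevant eigenvalue, and a factor $\tfrac12\,\mathrm{Vol}(\s^{m-1})$ from integrating the resulting constant density over $\s^{m-1}$. Assembling these gives exactly $c\,\varepsilon_r(\alpha^*)$ with $c = \tfrac12\,\mathrm{Vol}(\s^{m-1})\,(m-1)^r$ and $\varepsilon_r(\alpha) = \sin^2\alpha\,\cos^{2(r-1)}\alpha$.

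I expect the main obstacle to be the careful verification that the $d^2$-contributions genuinely vanish (equivalently, that $(d^*+d)^{r-2}\tau$ stays a pure normal, parallel section at every stage): this is where the symmetry of the rotationally symmetric map must be used decisively, since for a general map $d^2\tau(\varphi)$ and its iterates are nonzero and $E_r^{ES}\neq E_r$. The bookkeeping of the trigonometric constants through the $r$ iterations is routine but must be done consistently; once the structural vanishing is established, the rest is a computation that can be organized as a short induction on the number of applied operators.
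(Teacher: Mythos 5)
Your strategy coincides with the paper's: both reduce the claim to the facts that (a) the iterated operators $(d^*d)^{k}\tau(\varphi_{\alpha^*})$ are constant multiples of the unit normal section $\partial/\partial\alpha$ (this, like the second equality in \eqref{r-energia-esplicita-parallel-spheres-ES}, is imported from \cite{Mont-Ratto4}), and (b) consequently all the $d^{2}$-contributions that distinguish $E_r^{ES}$ from $E_r$ vanish. However, the one step that is genuinely new in this proposition --- the verification of (b) --- is precisely the step you defer, and the heuristic you offer for it is not correct. You say the relevant curvature term is ``proportional to $\tau$ itself and gets absorbed'': if $d^{2}\bigl(\overline{\Delta}^{\,j}\tau\bigr)$ were a nonzero $2$-form with values proportional to $\tau$, it would \emph{not} be absorbed --- it would contribute a strictly positive extra term to the energy, exactly as $\tfrac{1}{2}\int_M|d^2\tau(\varphi)|^2$ does in \eqref{ES-4-energia} for a general map. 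The first equality in \eqref{r-energia-esplicita-parallel-spheres-ES} holds only because these $2$-forms are identically zero, not because they cancel against something.

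The correct (and very short) argument is the one the paper gives: for a $0$-form $\sigma$ one has $d^{2}\sigma(X,Y)=R^{N}(d\varphi(X),d\varphi(Y))\sigma$, and since $\s^{m}$ has constant curvature,
\[
R^{\s^{m}}\bigl(d\varphi_{\alpha^*}(e_i),d\varphi_{\alpha^*}(e_j)\bigr)\frac{\partial}{\partial\alpha}
=\Bigl\langle d\varphi_{\alpha^*}(e_j),\tfrac{\partial}{\partial\alpha}\Bigr\rangle\, d\varphi_{\alpha^*}(e_i)
-\Bigl\langle d\varphi_{\alpha^*}(e_i),\tfrac{\partial}{\partial\alpha}\Bigr\rangle\, d\varphi_{\alpha^*}(e_j)=0,
\]
because $\partial/\partial\alpha$ is orthogonal to the image; no appeal to parallelism or umbilicity is needed. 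Once this is in place, the expansion of $(d^*+d)^{r-2}\tau(\varphi_{\alpha^*})$ collapses to the alternating string $d^*d\cdots d^*d\,\tau$ appearing in $E_r$, and the trigonometric bookkeeping you describe (which is Lemma~3.8 of \cite{Mont-Ratto4}) finishes the proof. So the architecture of your argument is right, but the pivotal vanishing must be established by this one-line curvature computation rather than by an ``absorption'' of a term proportional to $\tau$.
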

\begin{proof}The second equality in \eqref{r-energia-esplicita-parallel-spheres-ES} was obtained in Lemma 3.8 of \cite{Mont-Ratto4}. Therefore, we just have to prove that the first equality of \eqref{r-energia-esplicita-parallel-spheres-ES} holds.
In \cite{Mont-Ratto4} we computed $(d^*d)^{k}\tau \left (\varphi_{\alpha^*}\right )$ and showed that, for all $k \geq0$, this is of the form $c\,\, \partial / \partial \alpha$, where $c$ is a real constant which depends on $\alpha^*$ and $\partial / \partial \alpha$ is a unit section in the normal bundle of $\s^{m-1}(\sin\alpha^*)$ in $\s^m$. Therefore, it suffices to show that
\begin{equation}\label{d^2alpha}
d^2\, \left (\frac{\partial}{\partial \alpha}\right )\equiv 0
\end{equation}
for any map of the type \eqref{rotationallysymmetricmaps} (note that, with a slight abuse of terminology, $\partial / \partial \alpha$ in \eqref{d^2alpha} represents a section of $\varphi_{\alpha^*}^{-1}T\s^{m}$). Now, in order to compute the left side of \eqref{d^2alpha}, we need to recall some general basic facts (see \cite{EL83}, where a different sign convention for the curvature is used). Let $\varphi:M \to N$ be a smooth map between Riemannian manifolds and $ \sigma \in \Lambda^k \left ( \varphi^{-1}TN \right )$. Then
\[
 d^2 \sigma= R^{\varphi} \wedge \sigma \,,
\]
where $R^{\varphi}$ denotes the curvature tensor field of $\varphi^{-1}TN $. In the special case that $\sigma$ is a $0$-form we have, for all $X,Y \in {C}(TM)$,
\begin{equation}\label{d2-formula}
 d^2 \sigma(X,Y)= R^{\varphi}(X,Y)\sigma=R^N(d\varphi(X),d\varphi(Y)) \sigma \,,
\end{equation}
where $R^N$ denotes the curvature tensor field of $N$. We also recall that, in the special case that $N(\epsilon)$ is a space form of constant sectional curvature $\epsilon$, then
\begin{equation}\label{tensor-curvature-s^n}
R^{N(\epsilon)}(X,Y)Z=\epsilon \big( -\langle X,Z \rangle Y+\langle Y,Z \rangle X \big ) \quad  \forall \,X,Y,Z \in C(TN(\epsilon)) \,.
\end{equation}
Let $\{ e_i \}$, $i=1, \dots, m-1$, be a local orthonormal frame field on $\s^{m-1}$. By using \eqref{d2-formula} and \eqref{tensor-curvature-s^n} we compute:
\begin{equation*}
 d^2\, \left (\frac{\partial}{\partial \alpha}\right )\left ( e_i,e_j \right )=
 R^{\s^{m}}\left(d\varphi_{\alpha^*}(e_i),d\varphi_{\alpha^*}(e_j)\right)\left (\frac{\partial}{\partial \alpha}\right )\,=0 \,, \quad \quad  \,1 \leq i,j \leq m-1 ,
\end{equation*}
and so the proof of Proposition\link\ref{main-theor-parallels} is ended.
\end{proof}
We can now end the proof of Theorem\link\ref{Corollary-parallel-spheres-ES}. We observe that $G={\rm SO}(m)$ acts naturally by isometries on both the domain and the codomain of $\varphi_{\alpha^*}$, and $\varphi_{\alpha^*}$ is $G$-equivariant (the action of $G$ on the codomain is on the first $m$ coordinates of $\R^{m+1}$). Therefore, we can apply the principle of symmetric criticality as stated in Proposition\link\ref{Principle-symm-crit-BMOR} and conclude that $\varphi_{\alpha^*}$ is a critical point of the $ES-r$-energy functional if and only if it is stationary with respect to equivariant variations. Now, since $m \geq2$, this means that we just have to consider variations of the type
\[
\begin{array}{lcll}
                                          \left( \varphi_{\alpha^*}\right)_t \,:\, &\s^{m-1} &\to &  \s^{m} \subset \R^{m} \times \R \\
                                           &&& \\
                                           &w &\mapsto & \quad \quad ( \sin (\alpha^*+h(t))\,w, \,\cos( \alpha^*+h(t)) \,\, ,
                                         \end{array}
\]
where $h(t)$ is an arbitrary smooth function with $h(0)=0$.
Therefore, we conclude that $\varphi_{\alpha^*}$ is $ES-r$-harmonic if and only if $\alpha^*$ is a critical point of $\varepsilon_r({\alpha})$. As shown in \cite{Mont-Ratto4}, this happens if and only if $\sin \alpha^*=1 / \sqrt r$ and so the proof of Theorem\link\ref{Corollary-parallel-spheres-ES} is completed (formally, if $m=2$, within the class of equivariant variations one should also include variations through isometries in the direction tangent to the submanifold. But, since the functional is invariant by isometries, the conclusion is the same).
\end{proof}
\begin{proof}[Proof of Theorem\link\ref{Corollary-parallel-spheres-Clifford-ES}] The proof of this theorem follows the same lines of the proof of Theorem\link\ref{Corollary-parallel-spheres-ES} (here $G={\rm SO}(p+1) \times {\rm SO}(q+1)$) and so we just point out the relevant modifications. In this case we have to study maps of the following type:
\begin{equation}\label{def-Clifford-tori}\begin{array}{lcll}
                                           \varphi_{\alpha^*} \,:\, &\s^p(R_1) \times \s^q(R_2) &\to &  \s^{p+q+1} \subset \R^{p+1}\times \R^{q+1}\\
                                           &&& \\
                                           &(R_1\,w,R_2\,z) &\mapsto & ( \sin \alpha^*\,w,  \cos \alpha^*\,z) \,\, ,
                                         \end{array}
\end{equation}
where $w$ and $z$ denote the generic point of $\s^p$ and $\s^q$ respectively, $\alpha^* $ is an arbitrarily fixed value in the interval $(0,\pi \slash 2)$ and $R_1$, $R_2$ are arbitrary positive constants. First, by the same methods of Proposition \ref{main-theor-parallels}, we conclude that $(d^*d)^{k}\tau \left (\varphi_{\alpha^*}\right )=c \,\eta$, where $c$ is a constant and $\eta$ is a unit section in the normal bundle of $\s^p(\sin\alpha^*)\times\s^q(\cos\alpha^*)$ in $\s^{p+q+1}$. Next, we find that $d^2\eta=0$ and so we obtain:
\begin{proposition}\label{main-theor-Clifford}Let $\varphi_{\alpha^*}\,: \,\,\s^p(R_1) \times \s^q(R_2) \to \,\s^{p+q+1}$ be as in \eqref{def-Clifford-tori}.
\begin{equation*}
E_r^{ES} \left (\varphi_{\alpha^*} \right )=E_r \left (\varphi_{\alpha^*} \right )= c\, \varepsilon^C_r (\alpha^*)\,,
\end{equation*}
where
\[
c=\frac{1}{2}\, {\rm Vol}\left (\s^p(R_1) \times \s^q(R_2)\right )\, \left [\frac{p}{R_1^2} \,-\,\frac{q}{R_2^2}\right ]^2
\]
and the function $\varepsilon^C_r \, : (0,\pi/2) \to \R $ is defined by
\begin{equation*}
 \varepsilon^C_r (\alpha)= \sin^2 \alpha \, \cos^2 \alpha \,\,\left [\frac{p}{R_1^2}\,\cos^2 \alpha\,+\,\frac{q}{R_2^2}\,\sin^2 \alpha\right ]^{r-2} \,\, .
\end{equation*}
\end{proposition}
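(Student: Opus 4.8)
The plan is to replay the proof of Proposition~\ref{main-theor-parallels} essentially verbatim, with the small hypersphere replaced by the product $\s^p(\sin\alpha^*)\times\s^q(\cos\alpha^*)\hookrightarrow\s^{p+q+1}$ and the unit normal $\partial/\partial\alpha$ replaced by the unit normal section $\eta$ of $\varphi_{\alpha^*}^{-1}T\s^{p+q+1}$ introduced just above. As there, the second equality $E_r(\varphi_{\alpha^*})=c\,\varepsilon^C_r(\alpha^*)$ requires no new work: it is precisely the $r$-energy of a generalized Clifford torus as computed in \cite{Mont-Ratto4}, from which one also reads off the value of the constant $c$ and the function $\varepsilon^C_r$ displayed in the statement. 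So the real content to prove here is the first equality $E_r^{ES}(\varphi_{\alpha^*})=E_r(\varphi_{\alpha^*})$, and for $r\le 3$ this is automatic since the two functionals coincide.

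For $r\geq 4$ I would first record, by the symmetry of \eqref{def-Clifford-tori} and exactly as in \cite{Mont-Ratto4}, that $(d^*d)^k\tau(\varphi_{\alpha^*})=c_k\,\eta$ for every $k\geq 0$, with $c_k=c_k(\alpha^*)\in\R$ and $\eta$ the same unit normal section for all $k$. The algebraic heart of the argument is then the assertion that, granted $d^2\eta\equiv 0$, one has $(d^*+d)^{2j}\tau(\varphi_{\alpha^*})=(d^*d)^j\tau(\varphi_{\alpha^*})$ and $(d^*+d)^{2j+1}\tau(\varphi_{\alpha^*})=d\big((d^*d)^j\tau(\varphi_{\alpha^*})\big)$ for all $j\geq 0$. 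This is a quick induction on $j$: since $d^*$ annihilates $0$-forms, $(d^*+d)(c_j\,\eta)=c_j\,d\eta$, and then $(d^*+d)(c_j\,d\eta)=c_j\big(d^*d\eta+d^2\eta\big)=c_j\,d^*d\eta=d^*d(c_j\,\eta)$, the middle step using $d^2\eta\equiv 0$. Taking $2j$ or $2j+1$ equal to $r-2$, the form $(d^*+d)^{r-2}\tau(\varphi_{\alpha^*})$ becomes exactly the form whose squared norm is, by \eqref{2s-energia}--\eqref{2s+1-energia}, the integrand of $E_r(\varphi_{\alpha^*})$; since $(d^*+d)^r\varphi_{\alpha^*}=(d^*+d)^{r-2}\tau(\varphi_{\alpha^*})$ up to sign, we obtain $E_r^{ES}(\varphi_{\alpha^*})=E_r(\varphi_{\alpha^*})$.

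It therefore all comes down to $d^2\eta\equiv 0$, which is the single genuinely new computation and is the exact analogue of the vanishing of $d^2(\partial/\partial\alpha)$ established in Proposition~\ref{main-theor-parallels}. For a local orthonormal frame $\{e_i\}$ on $\s^p(R_1)\times\s^q(R_2)$, formulas \eqref{d2-formula} and \eqref{tensor-curvature-s^n} (with $\epsilon=1$) give
\[
d^2\eta(e_i,e_j)=R^{\s^{p+q+1}}\big(d\varphi_{\alpha^*}(e_i),d\varphi_{\alpha^*}(e_j)\big)\eta=-\langle d\varphi_{\alpha^*}(e_i),\eta\rangle\,d\varphi_{\alpha^*}(e_j)+\langle d\varphi_{\alpha^*}(e_j),\eta\rangle\,d\varphi_{\alpha^*}(e_i)=0,
\]
because each $d\varphi_{\alpha^*}(e_i)$ is tangent to the image torus $\s^p(\sin\alpha^*)\times\s^q(\cos\alpha^*)$ while $\eta$ is normal to it. Combining this with the induction above and the $r$-energy formula imported from \cite{Mont-Ratto4} yields the stated identity. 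I do not expect a real obstacle: the only delicate ingredients — the precise shape of $\varepsilon^C_r$ and of the constant $c$ — are exactly what is borrowed from \cite{Mont-Ratto4}, so within the present paper the decisive (and elementary) point is the curvature identity $d^2\eta\equiv0$.
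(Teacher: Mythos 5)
Your proposal is correct and follows essentially the same route as the paper, which (inside the proof of Theorem\link\ref{Corollary-parallel-spheres-Clifford-ES}) simply invokes the methods of Proposition\link\ref{main-theor-parallels}: namely that $(d^*d)^k\tau(\varphi_{\alpha^*})=c_k\,\eta$ for a unit normal section $\eta$, that $d^2\eta=0$ by \eqref{d2-formula} and \eqref{tensor-curvature-s^n} since each $d\varphi_{\alpha^*}(e_i)$ is tangent to the image while $\eta$ is normal, and that the explicit value of $E_r(\varphi_{\alpha^*})$ is imported from \cite{Mont-Ratto4}. Your explicit induction showing $(d^*+d)^{2j}\tau=(d^*d)^j\tau$ and $(d^*+d)^{2j+1}\tau=d\big((d^*d)^j\tau\big)$ is a correct and welcome elaboration of the step the paper leaves implicit.
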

We observe that, in particular,  a map of the type \eqref{def-Clifford-tori} is harmonic if and only if
\begin{equation}\label{not-harmonic-Clifford}
  \frac{p}{R_1^2} \,-\,\frac{q}{R_2^2} =  0\,.
\end{equation}
Now we apply Proposition\link\ref{Principle-symm-crit-BMOR} and deduce that we can consider equivariant variations only. In a fashion similar to the proof of Theorem\link\ref{Corollary-parallel-spheres-ES} we conclude that $\varphi_{\alpha^*}$ is a proper $ES-r$-harmonic map ($r \geq 2$) if and only if
\eqref{not-harmonic-Clifford} does not hold and $\alpha^*$ is a critical point of the function $\varepsilon^C_r$.

If $r \geq3$, then the explicit form of the condition $(\varepsilon^C_r)' (\alpha^*)=0$ is equivalent to:
\begin{equation}\label{condizione-r-harmonicity-clifford-esplicita}
     \frac{p}{R_1^2}+ \left [ (r-1)\,\left ( \frac{q}{R_2^2}-\frac{p}{R_1^2}\right )-2\,\frac{p}{R_1^2} \right ] \sin^2 \alpha^* + r\,\left [ \frac{p}{R_1^2}\,-\,\frac{q}{R_2^2}\right ]\sin^4 \alpha^* =0 \,\, .
\end{equation}
In the case of maps as in \eqref{def-Clifford-tori} the induced pull-back metric identifies the domain with $\s^p(\sin \alpha^* )\times \s^q(\cos \alpha^*)$. Therefore, in order to ensure that an $ES-r$-harmonic map of the type \eqref{def-Clifford-tori} is an \textit{isometric immersion}, it is enough to determine the solutions of \eqref{condizione-r-harmonicity-clifford-esplicita} with $R_1^2=\sin^2 \alpha^* $ and $R_2^2=\cos^2 \alpha^*$. By setting $R_1^2=\sin^2 \alpha^*=t$, \eqref{condizione-r-harmonicity-clifford-esplicita} becomes equivalent to the fact that $t$ is a root of the polynomial $P(t)$ in \eqref{r-harmonicity-Clifford-ES} ($0<t<1$).

If $r=2$, the condition $(\varepsilon^C_r)' (\alpha^*)=0$ is equivalent to $\alpha^*= \pi /4$. Then, imposing the condition of isometric immersion and since we are looking for proper solutions, we find \eqref{caso:r=2-ES} and the proof is ended.
\end{proof}
\begin{remark}We point out that Theorems\link\ref{Corollary-parallel-spheres} and \ref{Corollary-parallel-spheres-Clifford} were not formulated clearly in \cite{Mont-Ratto4}. More precisely, it was not made clear there that they referred to the $E_r(\varphi)$ functional rather than to $E_r^{ES}(\varphi)$. Similarly, in our recent work \cite{Mont-Ratto5} we proved the existence of several other $G$-equivariant examples of proper $r$-harmonic immersions and maps into rotation hypersurfaces and ellipsoids: again, it was not clearly stated that these examples were obtained by studying $E_r(\varphi)$ and not $E_r^{ES}(\varphi)$. However, with the methods of the present paper, it is not difficult to verify that all the examples of \cite{Mont-Ratto5} are not only $r$-harmonic, but also $ES-r$-harmonic. Therefore, we consider the present section of this work as a natural completion of \cite{Mont-Ratto4} and \cite{Mont-Ratto5}.
\end{remark}
\begin{remark}\label{remark-clifford1} Theorem\link\ref{Corollary-parallel-spheres-Clifford-ES} suggests that the geometric features of proper $ES-r$-harmonic submanifolds differ significantly from the biharmonic case and may depend on $r$. By way of example, assume that $p = q$. Then the polynomial $P(t)$ in \eqref{r-harmonicity-Clifford-ES} takes the following form:
\begin{equation*}
P(t)=p \,(2t-1) \,(rt^2-rt+1) \, .
\end{equation*}
Now, if $2 \leq r \leq 4$, the only root is $t= (1\slash 2)$ and produces a minimal submanifold. But it is important to point out that, if $r \geq 5$, then $P(t)$ has two admissible solutions
$$
t= \frac{1}{2} \pm \frac{1}{2 } \,\sqrt {\frac{r-4}{r}}
$$
which give rise to proper, $r$-harmonic ($ES-r$-harmonic) generalized Clifford tori in $\s^{2p+1}$.
Moreover, let now $p \neq q$ be fixed. It was proved in \cite{Mont-Ratto4}, by studying the discriminant of $P(t)$, that there exist three distinct proper $r$-harmonic ($ES-r$-harmonic) generalized Clifford tori in $\s^{p+q+1}$ provided that $r$ is sufficiently large (see \cite{Mont-Ratto4} for details).

\end{remark}
\begin{remark}It is possible to study biharmonicity and $ES-r$-harmonicity  also when we drop the assumption of isometric immersion. More precisely, let us consider a map $\varphi_{\alpha^*}$ as in \eqref{def-Clifford-tori}: the analysis in the proof of Theorem\link\ref{Corollary-parallel-spheres-Clifford-ES} shows that the map $\varphi_{\alpha^*}$ is proper biharmonic if and only if $\alpha^*=\pi /4$ and $p/R_1^2 \neq q/R_2^2$. If $r \geq3$, the map $\varphi_{\alpha^*}$ is $ES-r$-harmonic if and only if equation \eqref{condizione-r-harmonicity-clifford-esplicita} is satisfied. In particular, a routine analysis shows that, for any $p$, $q$, $R_1$ and $R_2$ such that $p/R_1^2 \neq q/R_2^2$, there always exists a proper  $ES-r$-harmonic map $\varphi_{\alpha^*}$ of type  \eqref{def-Clifford-tori}, but the explicit value of $\alpha^*$ can be obtained only by using numerical methods.
\end{remark}
\begin{remark}
Theorems~\ref{Corollary-parallel-spheres-ES} and \ref{Corollary-parallel-spheres-Clifford-ES} were known when $r=2$ (see \cite{CMO02} and \cite{Jiang}) and $r=3$ (see \cite{Maeta2}). The proofs given in \cite{CMO02, Jiang, Maeta2} do not use a variational approach as we did, but they are based on geometric constraints which the second fundamental form of a biharmonic, or triharmonic, immersion into $\s^m$ must satisfy. In the spirit of the discussion in this remark, we think that it would be interesting to determine the geometric requirements that the second fundamental form of a proper $ES-r$-harmonic immersion must verify.
\end{remark}
\begin{remark}\label{remark-donotknowELequations}
We point out that the use of the principle of symmetric criticality of Proposition\link\ref{Principle-symm-crit-BMOR} enables us to prove the existence of $G$-equivariant critical points even if we do not know the explicit general expression of the $ES-r$-tension field. For this reason, this seems to be a very convenient approach to the study of the Eells-Sampson functionals $E_r^{ES}(\varphi)$. We shall encounter other instances of this type in Section\link\ref{section-Models}.
\end{remark}
\subsection{Curves}\label{Sub-sec-curves} The notions of $r$-harmonicity and $ES-r$-harmonicity can also be defined when
the domain is noncompact, considering compactly supported variations (for
more details see Subsection\link\ref{Sub-Sect-noncompact}). In the special case of curves, it is
easy to check that $\varphi:(M^1,dt^2)\to(N^n,h)$ is $r$-harmonic if and only if
it is $ES-r$-harmonic. Now, we assume
that $\varphi(t)$ is a geodesic and we reparametrise it using a diffeomorphism $\mu(s)$, where $s$ denotes the arc length; i.e., we consider
$\psi(s)=(\varphi\circ\mu )(s)$. Then $\psi$ is proper ES-r-harmonic if and only if $\mu(s)$ is a polynomial of order $r'$, where $2 \leq r' \leq 2r-1$. Indeed, from \cite{Maeta1}, we have:
\[
\tau_r(\psi)=\overline{\Delta}^{r-1}\tau(\psi)=(-1)^{r-1}\,\mu^{(2r)}(s)\varphi'(\mu(s))\,.
\]
The previous observation produces, in the case of noncompact $1$-dimensional domains, proper $ES-r$-harmonic ($r$-harmonic) curves.
\section{The Euler-Lagrange equations for $E_4^{ES}(\varphi)$: the general case and maps into space forms}\label{quadriharmonicity}
The main aim of this section is to compute the Euler-Lagrange equations for the Eells-Sampson functional $E^{ES}_4(\varphi)$. First, we shall obtain the equations in the general case. Next, we shall illustrate some relevant simplifications which occur when the target is a space form. Finally, we will provide some geometric applications in two different contexts: isometric immersions and conformal deformations of the domain metric.

We consider smooth maps between two Riemannian manifolds \(\varphi\colon (M^m,g)\to (N^n,h)\), where \(M\) is compact and both metrics \(g,h\) are fixed. For simplicity, when the context is clear, we shall write $\nabla$ for $\nabla^M$.
For a given arbitrary point \(p\in M\), it is
sometimes easier to consider a geodesic frame field \(\{X_i\}\) around \(p\)
and perform the calculations at the point \(p\).
We recall that, when $r=4$, the Eells-Sampson functional is
\begin{align*}
E^{ES}_4(\varphi)=\frac{1}{2}\int_M|(d^\ast + d)(d\tau(\varphi))|^2\dv
=\frac{1}{2}\int_M|d^\ast d\tau(\varphi)+d^2\tau(\varphi)|^2\dv.
\end{align*}
Note that \(d^\ast d\tau(\varphi)\in C(\varphi^{-1}TN)=A^0(\varphi^{-1}TN)\) and
\(d^2\tau(\varphi)\in C(\Lambda^2T^\ast M\otimes\varphi^{-1}TN)=A^2(\varphi^{-1}TN)\).
In order to simplify the formal sum in \(E_4^{ES}(\varphi)\) we observe that
\begin{align*}
|d^\ast d\tau(\varphi)+d^2\tau(\varphi)|^2=|d^\ast d\tau(\varphi)|^2+|d^2\tau(\varphi)|^2=|\overline{\Delta}\tau(\varphi)|^2+|d^2\tau(\varphi)|^2.
\end{align*}
The curvature term here acquires the form
\begin{align*}
|d^2\tau(\varphi)|^2=|R^\varphi\wedge\tau(\varphi)|^2=\frac{1}{2}\sum_{i,j}|R^N(d\varphi(X_i),d\varphi(X_j))\tau(\varphi)|^2.
\end{align*}
In the sequel, we shall omit to write the symbol $\sum$ when it is clear from the context.
Therefore, we have
\begin{align*}
E^{ES}_4(\varphi)
&=\frac{1}{2}\int_M|\overline{\Delta}\tau(\varphi)|^2\dv
+\frac{1}{4}\int_M|R^N(d\varphi(X_i),d\varphi(X_j))\tau(\varphi)|^2\dv\\
&=E_4(\varphi)+\frac{1}{4}\int_M|R^N(d\varphi(X_i),d\varphi(X_j))\tau(\varphi)|^2\dv.
\end{align*}
\begin{remark}
It was already noted in \cite{Maeta4}, equation (2.8), that the four energy of Eells and Sampson contains a curvature contribution.
\end{remark}
In the following we will determine the Euler-Lagrange equation for \(E^{ES}_4(\varphi)\).
To this end we set
\begin{align*}
\widehat E_4(\varphi)&=\frac{1}{2}\int_M|d^2\tau(\varphi)|^2\dv=\frac{1}{4}\int_M|R^N(d\varphi(X_i),d\varphi(X_j))\tau(\varphi)|^2\dv,
\end{align*}
so that
\[
E^{ES}_4(\varphi)=E_4(\varphi)+\widehat E_4(\varphi).
\]
Let us consider a smooth variation of \(\varphi\), that is we consider a smooth map
\begin{align*}
\Phi\colon\R\times M\to N,\qquad (t,p)\mapsto\Phi(t,p)=\varphi_t(p)
\end{align*}
such that \(\varphi_0(p)=\varphi(p)\) for any \(p\in M\), and denote by $V$ its variation vector field, i.e., \(\frac{d}{dt}\big|_{t=0}\varphi_t=V\). The first variation of \(E_4(\varphi)\) is already known in the literature (see \cite{Maeta1}) and it is given by
\begin{align}
\left .\frac{d}{dt} \, E_{4}(\varphi_t) \, \right |_{t=0}=&-\int_M\langle \overline{\Delta}^3\tau(\varphi)+R^N(\overline{\Delta}\tau(\varphi),\nabla^\varphi_{X_i}\tau(\varphi))d\varphi(X_i)\nonumber \\
\nonumber&\qquad + R^N(d\varphi(X_i),\overline{\Delta}^2\tau(\varphi))d\varphi(X_i)
-R^N(\nabla^\varphi_{X_i}\overline{\Delta}\tau(\varphi),\tau(\varphi))d\varphi(X_i),V\rangle\dv\\
=&-\int_M\langle \tau_4(\varphi), V \rangle\dv\,.\label{first-variation-e4}
\end{align}
We then compute the first variational formula for $\widehat E_4(\varphi)$. For this, we first note that, for \((t,p)\) arbitrary but fixed, we have
\begin{align*}
R^{\varphi_t}_p(X_i,X_j)\tau(\varphi_t)_p=&R^N(d\varphi_{t,p}(X_i),d\varphi_{t,p}(X_j))\tau(\varphi_t)_p \\
=&R^N(d\Phi_{(t,p)}(X_i),d\Phi_{(t,p)}(X_j))\tau(\varphi_t)_p\\
=&R^\Phi_{(t,p)}(X_i,X_j)\tau(\varphi_t)_p,
\end{align*}
where, in the last term, \(\tau(\varphi_t)_p\) is to be understood as a section \(\tilde\tau\) in \(\Phi^{-1}TN\).
Of course, \(\tilde\tau(t,p)=\tau(\varphi_t)_p\) is not equal to \(\tau(\Phi)_{(t,p)}\).
With this setting, the first variation of $\widehat E_4(\varphi_t)$ becomes
\begin{align}
\left .\frac{d}{dt}\widehat E_4(\varphi_t)\right|_{t=0}&=\frac{1}{4}\int_M\frac{\partial}{\partial t}(0,p)(|R^\Phi(X_i,X_j)\tilde\tau|^2)\dv \nonumber\\
&=\frac{1}{2}\int_M\langle\nabla^\Phi_{\frac{\partial}{\partial t}(0,p)}R^\Phi(X_i,X_j)\tilde\tau,R^\varphi(X_i,X_j)\tau(\varphi)\rangle\dv \label{eq-es-4-harmonic1}.
\end{align}

Now, a direct calculation in local coordinates, using
\begin{align*}
\nabla^\Phi_{\frac{\partial}{\partial t}(0,p)}\tilde\tau=-\overline{\Delta} V-\trace R^N(d\varphi(\cdot),V)d\varphi(\cdot),
\end{align*}
gives
\begin{align*}
\nabla^\Phi_{\frac{\partial}{\partial t}(0,p)}R^\Phi(X_i,X_j)\tilde\tau=&(\nabla^N_{V(p)}R^N)(d\varphi_p(X_i),d\varphi_p(X_j),\tau(\varphi)_p)\nonumber \\
&+R^N_{\varphi(p)}(\nabla^\varphi_{X_i}V,d\varphi_p(X_j))\tau(\varphi)_p+R^N_{\varphi(p)}(d\varphi_p(X_i),\nabla^\varphi_{X_j}V)\tau(\varphi)_p \\
&+R^N_{\varphi(p)}(d\varphi_p(X_i),d\varphi_p(X_j))(-\overline{\Delta} V-\trace R^N(d\varphi(\cdot),V)d\varphi(\cdot))\nonumber.
\end{align*}
This fact, together with 
\begin{align*}
\langle R^N(d\varphi(X_i),\nabla^\varphi_{X_j}V)&\tau(\varphi),R^N(d\varphi(X_i),d\varphi(X_j))\tau(\varphi)\rangle \\
&=-\langle R^N(\nabla^\varphi_{X_j}V,d\varphi(X_i))\tau(\varphi),R^N(d\varphi(X_i),d\varphi(X_j))\tau(\varphi)\rangle \\
&=-\langle R^N(\nabla^\varphi_{X_i}V,d\varphi(X_j))\tau(\varphi),R^N(d\varphi(X_j),d\varphi(X_i))\tau(\varphi)\rangle \\
&=\langle R^N(\nabla^\varphi_{X_i}V,d\varphi(X_j))\tau(\varphi),R^N(d\varphi(X_i),d\varphi(X_j))\tau(\varphi)\rangle,
\end{align*}
and taking into account \eqref{eq-es-4-harmonic1}, gives the following formula
\begin{align}
\label{variation-e4es-a}
\left. \frac{d}{dt}\widehat E_4(\varphi_t)\right|_{t=0}=
\frac{1}{2}\int_M \nonumber&
\langle (\nabla^N_{V}R^N)(d\varphi(X_i),d\varphi(X_j),\tau(\varphi))+2R^N(\nabla^\varphi_{X_i}V,d\varphi(X_j))\tau(\varphi) \\
&+R^N(d\varphi(X_i),d\varphi(X_j))(-\overline{\Delta} V-\trace R^N(d\varphi(\cdot),V)d\varphi(\cdot)), \\ \nonumber
& R^N(d\varphi(X_i),d\varphi(X_j))\tau(\varphi)\big\rangle\dv.
\end{align}

We are now in the right position to state the main result of this section:
\begin{theorem}\label{Main-EL-equations-tau4ES}
Let \((M^m,g)\) be a compact Riemannian manifold and \((N^n,h)\) a Riemannian manifold.
Consider a smooth map \(\varphi\colon M\to N\). Then the following formula holds
\begin{align*}
\left . \frac{d}{dt}E_4^{ES}(\varphi_t)\right |_{t=0}=-\int_M\langle\tau_4^{ES}(\varphi),V\rangle\dv,
\end{align*}
where \(\tau_4^{ES}(\varphi)\) is given by the following expression
\begin{align}
\label{tau-4-ES-general-target}
\tau_4^{ES}(\varphi)=
\tau_4(\varphi)+\hat{\tau}_4(\varphi)\,,
\end{align}
where 
\begin{align}
\nonumber\tau_4(\varphi)=&\overline{\Delta}^3\tau(\varphi)+ R^N(d\varphi(X_i),\overline{\Delta}^2\tau(\varphi))d\varphi(X_i)
-R^N(\nabla^\varphi_{X_i}\overline{\Delta}\tau(\varphi),\tau(\varphi))d\varphi(X_i) \\
\nonumber&+R^N(\overline{\Delta}\tau(\varphi),\nabla^\varphi_{X_i}\tau(\varphi))d\varphi(X_i) \,,\\
\label{tau-4-hat}
\nonumber\hat{\tau}_4(\varphi)=& -\frac{1}{2}\big(2\xi_1+2d^\ast\Omega_1+\overline{\Delta}\Omega_0+\trace R^N(d\varphi(\cdot),\Omega_0)d\varphi(\cdot)\big)\,,
\end{align}
and we have used the following abbreviations
\[
\begin{array}{lcll}
\Omega_0&=&R^N(d\varphi(X_i),d\varphi(X_j))(R^N(d\varphi(X_i),d\varphi(X_j))\tau(\varphi)),& \Omega_0\in C(\varphi^{-1}TN), \\
\Omega_1(X)&=&R^N(R^N(d\varphi(X),d\varphi(X_j))\tau(\varphi),\tau(\varphi))d\varphi(X_j),& \Omega_1\in A^1(\varphi^{-1}TN),\\
\xi_1&=&-(\nabla^N R^N)(d\varphi(X_j),R^N(d\varphi(X_i),d\varphi(X_j))\tau(\varphi),\tau(\varphi),d\varphi(X_i)),& \xi_1\in C(\varphi^{-1}TN),
\end{array}
\]
\end{theorem}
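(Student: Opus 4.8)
The plan is to compute the first variation of $\widehat E_4(\varphi)$ directly and then rearrange \eqref{variation-e4es-a} into a form $-\int_M\langle\hat\tau_4(\varphi),V\rangle\,dV$ by integrating by parts each of the three terms in the integrand, using the symmetry and skew-symmetry properties of $R^N$ and the first Bianchi identity to collect terms. Since $\tau_4^{ES}(\varphi)=\tau_4(\varphi)+\hat\tau_4(\varphi)$ and $\tau_4(\varphi)$ already appears in \eqref{first-variation-e4} as the gradient of $E_4$, it suffices to identify $\hat\tau_4(\varphi)$ as the negative of the gradient of $\widehat E_4$. I would start from the expression \eqref{variation-e4es-a}, which contains exactly three types of terms inside the inner product against $R^N(d\varphi(X_i),d\varphi(X_j))\tau(\varphi)$: the covariant-derivative-of-curvature term $(\nabla^N_V R^N)(\ldots)$, the term $2R^N(\nabla^\varphi_{X_i}V,d\varphi(X_j))\tau(\varphi)$ involving a first derivative of $V$, and the term $R^N(d\varphi(X_i),d\varphi(X_j))(-\overline\Delta V-\trace R^N(d\varphi(\cdot),V)d\varphi(\cdot))$ involving a second derivative of $V$.

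First I would handle the term with $\overline\Delta V$: pairing $R^N(d\varphi(X_i),d\varphi(X_j))(-\overline\Delta V)$ against $R^N(d\varphi(X_i),d\varphi(X_j))\tau(\varphi)$ and using the fact that $\overline\Delta$ is formally self-adjoint on $A^0(\varphi^{-1}TN)$, we move $\overline\Delta$ onto the curvature expression, which produces the term $-\tfrac12\overline\Delta\Omega_0$ after recognizing $\Omega_0=R^N(d\varphi(X_i),d\varphi(X_j))(R^N(d\varphi(X_i),d\varphi(X_j))\tau(\varphi))$. Here one uses that $R^N$ is skew-adjoint as an endomorphism: $\langle R^N(A,B)C,D\rangle=-\langle C,R^N(A,B)D\rangle$, so that $\langle R^N(d\varphi(X_i),d\varphi(X_j))W,R^N(d\varphi(X_i),d\varphi(X_j))\tau(\varphi)\rangle=-\langle W,\Omega_0\rangle$ for any section $W$. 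The companion piece $R^N(d\varphi(X_i),d\varphi(X_j))(-\trace R^N(d\varphi(\cdot),V)d\varphi(\cdot))$ paired against the same quantity gives, by the same skew-adjointness, a term of the form $-\tfrac12\trace R^N(d\varphi(\cdot),\cdot)d\varphi(\cdot)$ applied to $\Omega_0$, matching the last term of $\hat\tau_4(\varphi)$ — one must be careful to check that the trace operator in this adjointness computation transposes as required, which follows again from $\langle R^N(X,Y)Z,W\rangle=\langle R^N(Z,W)X,Y\rangle$.

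Next I would treat the first-derivative term $R^N(\nabla^\varphi_{X_i}V,d\varphi(X_j))\tau(\varphi)$: using $\langle R^N(\nabla^\varphi_{X_i}V,d\varphi(X_j))\tau(\varphi),\Theta\rangle=\langle\nabla^\varphi_{X_i}V,R^N(\tau(\varphi),\Theta)d\varphi(X_j)\rangle$ (again the $\langle R^N(X,Y)Z,W\rangle=\langle R^N(Z,W)X,Y\rangle$ identity with $\Theta=R^N(d\varphi(X_i),d\varphi(X_j))\tau(\varphi)$), we recognize the inner argument as $\Omega_1(X_i)$ up to sign and then integrate by parts in $X_i$, i.e. use that $d^\ast$ is the adjoint of $\nabla^\varphi$ acting on $1$-forms valued in $\varphi^{-1}TN$; this yields the $2d^\ast\Omega_1$ term. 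For the $(\nabla^N_V R^N)$ term I would rewrite $\langle(\nabla^N_V R^N)(d\varphi(X_i),d\varphi(X_j),\tau(\varphi)),\Theta\rangle$ as $\langle V,\xi_1\rangle$ by moving the derivative onto $V$ via the metric compatibility of $\nabla^N$ and rearranging the arguments of $\nabla^N R^N$ using its symmetries; this is a pointwise (algebraic/first-order) manipulation and directly produces the $\xi_1$ contribution with the stated sign. Collecting the coefficient $\tfrac12$ from \eqref{variation-e4es-a} and the signs from the skew-adjointness of $R^N$ gives precisely $\hat\tau_4(\varphi)=-\tfrac12\big(2\xi_1+2d^\ast\Omega_1+\overline\Delta\Omega_0+\trace R^N(d\varphi(\cdot),\Omega_0)d\varphi(\cdot)\big)$.

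The main obstacle I expect is bookkeeping: keeping track of all the signs and index symmetries correctly when repeatedly invoking $\langle R^N(X,Y)Z,W\rangle=-\langle R^N(Y,X)Z,W\rangle=-\langle R^N(X,Y)W,Z\rangle=\langle R^N(Z,W)X,Y\rangle$ together with the first Bianchi identity, and in particular ensuring that the integrations by parts (self-adjointness of $\overline\Delta$, adjointness of $d^\ast$ and $\nabla^\varphi$) are applied to the right slot. The use of a geodesic frame $\{X_i\}$ at the point $p$, already set up in the text, removes the $\nabla^M_{X_i}X_j$ correction terms and makes these manipulations manageable, so the computation is routine in principle but delicate in practice; no genuinely new idea beyond the first variation formula \eqref{variation-e4es-a} and the standard identities is required.
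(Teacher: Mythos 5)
Your overall strategy is exactly the one the paper follows: starting from \eqref{variation-e4es-a}, you treat the three blocks of terms separately, using the skew-adjointness of $R^N(\cdot,\cdot)$ and the pair symmetry $\langle R^N(X,Y)Z,W\rangle=\langle R^N(Z,W)X,Y\rangle$ to recognize $\Omega_0$ and $\Omega_1$, and then the self-adjointness of $\overline{\Delta}$ and the adjointness of $\nabla^\varphi$ and $d^\ast$ (in a geodesic frame, via a divergence term) to pass the derivatives off $V$; this reproduces the contributions $\overline{\Delta}\Omega_0$, $\trace R^N(d\varphi(\cdot),\Omega_0)d\varphi(\cdot)$ and $2d^\ast\Omega_1$ of $\hat{\tau}_4(\varphi)$ just as in the text. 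The sign you leave unresolved in the $\Omega_1$ step is settled by writing $\langle R^N(\nabla^\varphi_{X_i}V,d\varphi(X_j))\tau(\varphi),\Theta\rangle=\langle R^N(\Theta,\tau(\varphi))d\varphi(X_j),\nabla^\varphi_{X_i}V\rangle$ with $\Theta=R^N(d\varphi(X_i),d\varphi(X_j))\tau(\varphi)$, which is $\langle\Omega_1(X_i),\nabla^\varphi_{X_i}V\rangle$ on the nose.

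The one step that does not go through as you describe it is the $(\nabla^N_{V}R^N)$ term. You propose to turn $\langle(\nabla^N_{V}R^N)(d\varphi(X_i),d\varphi(X_j),\tau(\varphi)),\Theta\rangle$ into $2\langle\xi_1,V\rangle$ by ``metric compatibility'' and the symmetries of $\nabla^N R^N$, and the only Bianchi identity you invoke is the \emph{first}. But the algebraic symmetries \eqref{curvature-identity-a}--\eqref{curvature-identity-c} all keep the differentiation slot fixed, so they can never move $V$ out of the direction of differentiation; and metric compatibility plays no role here, since $(\nabla^N_V R^N)$ is already tensorial in $V$. The tool actually needed is the \emph{second} (differential) Bianchi identity: it trades the derivative in the direction $V$ for derivatives in the directions $d\varphi(X_j)$ and $d\varphi(X_i)$, placing $V$ in an argument slot from which \eqref{curvature-identity-a} can then extract it. It is also what produces the factor $2$ in $2\xi_1$, since the two resulting terms coincide after applying \eqref{curvature-identity-c} and relabelling $i\leftrightarrow j$. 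Without this step the specific form of $\xi_1$ --- in which $R^N$ is differentiated in the direction $d\varphi(X_j)$ rather than $V$ --- cannot be reached; replace the first Bianchi identity by the second at this point and the rest of your argument is the paper's.
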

\begin{proof}
The proof consists of a manipulation of the terms on the right hand side of \eqref{variation-e4es-a}. First of all we rewrite the second addend as
\begin{align*}
\langle R^N(\nabla^\varphi_{X_i}V&,d\varphi(X_j))\tau(\varphi),R^N(d\varphi(X_i),d\varphi(X_j))\tau(\varphi)\rangle\\
=&\langle R^N(R^N(d\varphi(X_i),d\varphi(X_j))\tau(\varphi),\tau(\varphi))d\varphi(X_j),\nabla^\varphi_{X_i}V\rangle\\
=&\langle\Omega_1(X_i),\nabla^\varphi_{X_i}V\rangle.
\end{align*}

Using that \(\{X_i\}\) is a geodesic frame field around a point \(p\) we obtain, at \(p\),
\begin{align}
\langle \Omega_1(X_i),\nabla^\varphi_{X_i}V\rangle&=X_i\langle\Omega_1(X_i),V\rangle-\langle\nabla^\varphi_{X_i}\Omega_1(X_i),V\rangle \nonumber \\
&=\operatorname{div}Y+\langle d^\ast\Omega_1,V\rangle, \label{proof-main-4-ES-1}
\end{align}
where \(Y=\langle\Omega_1(X_k),V\rangle X_k\) is a well-defined, global tangent vector field on \(M\).

Next, for the third addend on the right hand side of \eqref{variation-e4es-a}, we find
\begin{align*}
-\langle R^N(d\varphi(X_i),d\varphi(X_j))\overline{\Delta} V,R^N(d\varphi(X_i),d\varphi(X_j))\tau(\varphi)\rangle
=\langle\Omega_0,\overline{\Delta} V\rangle.
\end{align*}
It follows that
\begin{align}
-\int_M\langle R^N(d\varphi(X_i),d\varphi(X_j))&\overline{\Delta} V,R^N(d\varphi(X_i),d\varphi(X_j))\tau(\varphi)\rangle\dv\nonumber \\
&=\int_M\langle \overline{\Delta}\Omega_0,V \rangle\dv.\label{proof-main-4-ES-2}
\end{align}

As for the last term on the right hand side of \eqref{variation-e4es-a}, we obtain
\begin{align}
-\langle R^N(d\varphi(X_i)&,d\varphi(X_j))(\trace R^N(d\varphi(\cdot),V)d\varphi(\cdot)),R^N(d\varphi(X_i),d\varphi(X_j))\tau(\varphi)\rangle\nonumber \\
&=\langle R^N(d\varphi(X_i),d\varphi(X_j))(R^N(d\varphi(X_i),d\varphi(X_j))\tau(\varphi)),\trace R^N(d\varphi(\cdot),V)d\varphi(\cdot)\rangle\nonumber \\
&=\langle \Omega_0,R^N(d\varphi(X_k),V)d\varphi(X_k)\rangle \nonumber\\
&=\langle\trace R^N(d\varphi(\cdot),\Omega_0)d\varphi(\cdot),V\rangle.\label{proof-main-4-ES-3}
\end{align}

The term in \eqref{variation-e4es-a} that involves the derivative of the curvature on the target is the most
complicated one. In order to manipulate it we need the following symmetries of the derivative of the curvature tensor field:
 \begin{equation}
   \label{curvature-identity-a}
  \langle (\nabla R)(X,Y,Z,W),T\rangle =\langle (\nabla R)(X,T,W,Z),Y\rangle
 \end{equation}
 \begin{equation*}
  \langle (\nabla R)(X,Y,Z,W),T\rangle =\langle (\nabla R)(X,W,T,Y),Z\rangle
 \end{equation*}
 \begin{equation}
    \label{curvature-identity-c}
  (\nabla R)(X,Y,Z,W) =-(\nabla R)(X,Z,Y,W),
 \end{equation}
where we use the notation
\begin{align*}
\langle (\nabla R)(X,Y,Z,W),T\rangle&=\langle (\nabla_XR)(Y,Z,W),T\rangle \\
&=\langle\nabla_X R(Y,Z)W-R(\nabla_XY,Z)W-R(Y,\nabla_XZ)W-R(Y,Z)\nabla_XW,T\rangle.
\end{align*}
Using the second Bianchi-identity, we have
\begin{align*}
(\nabla^N_{V(p)}R^N)(d\varphi_p(X_i),d\varphi_p(X_j),\tau(\varphi)_p)
=&-(\nabla^N_{d\varphi_p(X_j)}R^N)(V(p),d\varphi_p(X_i),\tau(\varphi)_p) \\
&-(\nabla^N_{d\varphi_p(X_i)}R^N)(d\varphi_p(X_j),V(p),\tau(\varphi)_p).
\end{align*}
This leads us to
\begin{align*}
\langle(\nabla^N_{V(p)}R^N)(d\varphi_p(X_i),&d\varphi_p(X_j),\tau(\varphi)_p),R^N(d\varphi(X_i),d\varphi(X_j))\tau(\varphi)\rangle \\
=&-\langle(\nabla^N_{d\varphi_p(X_j)}R^N)(V(p),d\varphi_p(X_i),\tau(\varphi)_p),R^N(d\varphi(X_i),d\varphi(X_j))\tau(\varphi)\rangle \\
&-\langle(\nabla^N_{d\varphi_p(X_i)}R^N)(d\varphi_p(X_j),V(p),\tau(\varphi)_p),R^N(d\varphi(X_i),d\varphi(X_j))\tau(\varphi)\rangle \\
=&-2\langle(\nabla^N_{d\varphi_p(X_j)}R^N)(V(p),d\varphi_p(X_i),\tau(\varphi)_p),R^N(d\varphi(X_i),d\varphi(X_j))\tau(\varphi)\rangle,
\end{align*}
where we have applied \eqref{curvature-identity-c} in the second step.
Then, applying \eqref{curvature-identity-a}, we obtain
\begin{align}
\langle(\nabla^N_{V(p)}R^N)&(d\varphi_p(X_i),d\varphi_p(X_j),\tau(\varphi)_p),R^N(d\varphi(X_i),d\varphi(X_j))\tau(\varphi)\rangle\nonumber \\
&=-2\langle(\nabla^N R^N)(d\varphi(X_j),R^N(d\varphi(X_i),d\varphi(X_j))\tau(\varphi),\tau(\varphi),d\varphi(X_i)),V\rangle\nonumber\\
&=2\langle \xi_1,V \rangle. \label{proof-main-4-ES-4}
\end{align}
Finally, replacing \eqref{proof-main-4-ES-1}, \eqref{proof-main-4-ES-2}, \eqref{proof-main-4-ES-3}, \eqref{proof-main-4-ES-4} into \eqref{variation-e4es-a}, we obtain
\begin{eqnarray}
\left .\frac{d}{dt}\right|_{t=0}\widehat E_4(\varphi_t)&=&\frac{1}{2}\int_M\langle 2\xi_1+2d^\ast\Omega_1+\overline{\Delta}\Omega_0+\trace R^N(d\varphi(\cdot),\Omega_0)d\varphi(\cdot),V\rangle\dv\nonumber\\
&=&-\,\int_M\langle \hat{\tau}_4(\varphi),V\rangle\dv\,,\nonumber
\end{eqnarray}
from which the proof follows immediately taking into account \eqref{first-variation-e4}.

\end{proof}
\begin{remark}\label{remark-leading-terms}
We point out that the Euler-Lagrange equation $\tau_4^{ES}(\varphi)=0$ is a semi-linear elliptic system of order $8$. The leading terms are given by $\tau_4(\varphi)$, while $\hat{\tau}_4(\varphi)$ provides a differential operator of order $4$.
\end{remark}
\subsection{The case of space form target}
In the case that the target manifold \((N^n,h)\) is a real space form \(N^n(\epsilon)\) with constant curvature \(\epsilon\)
we can expect that the first variational formula of \(\widehat E_4(\varphi)\) simplifies. Indeed, since the curvature is constant, \eqref{variation-e4es-a} becomes:
\begin{align}
\label{spaceform-a}
\left .\frac{d}{dt}\widehat E_4(\varphi_t)\right |_{t=0}=\frac{1}{2}\int_M &
\big\langle 2R^N(\nabla^\varphi_{X_i}V,d\varphi(X_j))\tau(\varphi) \nonumber\\
\nonumber&+R^N(d\varphi(X_i),d\varphi(X_j))(-\overline{\Delta} V-\trace R^N(d\varphi(\cdot),V)d\varphi(\cdot)), \nonumber\\
&R^N(d\varphi(X_i),d\varphi(X_j))\tau(\varphi)\big\rangle\dv.
\end{align}
In the following we will compute all the terms on the right hand side of \eqref{spaceform-a}.
Recall that
\begin{align*}
\langle R^N(\nabla^\varphi_{X_i}V&,d\varphi(X_j))\tau(\varphi),R^N(d\varphi(X_i),d\varphi(X_j))\tau(\varphi)\rangle
=\operatorname{div}Y+\langle d^\ast\Omega_1,V\rangle,
\end{align*}
where \(\Omega_1\in A^1(\varphi^{-1}TN)\) is defined as
\begin{align*}
\Omega_1(X)=R^N\big(R^N(d\varphi(X),d\varphi(X_j))\tau(\varphi),\tau(\varphi)\big)d\varphi(X_j)
\end{align*}
and \(Y=\langle\Omega_1(X_k),V\rangle X_k\) is a well-defined, global vector field on \(M\).
Next, for our purposes, it turns out to be useful to define the following vector field:
\begin{align*}
Z=\langle\tau(\varphi),d\varphi(X_k)\rangle X_k=-(\operatorname{div} S)^\sharp,
\end{align*}
where \(S\) is the stress-energy tensor field associated to \(\varphi\).
Clearly, we have
\begin{align}\label{divZ}
\operatorname{div}Z=|\tau(\varphi)|^2+\langle d\varphi,\nabla^\varphi\tau(\varphi)\rangle.
\end{align}
We can now state our main result in the context of maps into a space form:
\begin{theorem}\label{Prop-space-forms}
In the case that \((N^n,h)=N^n(\epsilon)\)
the terms in the expression of \(\tau_4^{ES}(\varphi)\) given by \eqref{tau-4-ES-general-target}
simplify as follows:
\begin{align*}
\xi_1=&0, \\
\Omega_0=&2\epsilon^2(\trace\langle d\varphi(\cdot),d\varphi(Z)\rangle d\varphi(\cdot)-|d\varphi|^2d\varphi(Z)),\\
\Omega_1=&\epsilon^2\big(|Z|^2d\varphi(\cdot)-Z^\flat\otimes d\varphi(Z)
-\langle d\varphi(Z),d\varphi(\cdot)\rangle\tau(\varphi)+|d\varphi|^2Z^\flat\otimes\tau(\varphi)\big).
\end{align*}
\end{theorem}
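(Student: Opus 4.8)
The plan is to exploit the two features of a space form that drastically simplify the three tensors in \eqref{tau-4-ES-general-target}: its curvature tensor has the explicit algebraic form \eqref{tensor-curvature-s^n}, and it is parallel, $\nabla^N R^N\equiv 0$. The latter fact disposes of $\xi_1$ immediately: since $\xi_1$ is built entirely from $(\nabla^N R^N)$, it vanishes identically when $N=N^n(\epsilon)$. For the remaining two tensors the strategy is purely computational — substitute \eqref{tensor-curvature-s^n} into the defining expressions, carry out the inner curvature evaluation first, then the outer one, and finally sum over the geodesic frame $\{X_i\}$, recognising the contracted sums in terms of the vector field $Z=\langle\tau(\varphi),d\varphi(X_k)\rangle X_k$ and the scalars $|d\varphi|^2$ and $|Z|^2$.

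Concretely, for $\Omega_0$ I would write $v_i=d\varphi(X_i)$, $u=\tau(\varphi)$, and first use \eqref{tensor-curvature-s^n} to get $R^N(v_i,v_j)u=\epsilon(\langle v_j,u\rangle v_i-\langle v_i,u\rangle v_j)$; applying \eqref{tensor-curvature-s^n} a second time to $R^N(v_i,v_j)\big(R^N(v_i,v_j)u\big)$ produces four scalar-weighted terms of the form $\langle v_i,u\rangle\langle v_i,v_j\rangle v_j$, $\langle v_j,u\rangle|v_i|^2v_j$, and so on. Summing over $i,j$ and using the two elementary identities $\sum_i\langle v_i,u\rangle v_i=d\varphi(Z)$ and $\sum_i|v_i|^2=|d\varphi|^2$, two of the four sums collapse to $\trace\langle d\varphi(\cdot),d\varphi(Z)\rangle d\varphi(\cdot)$ and the other two to $|d\varphi|^2d\varphi(Z)$, which yields the stated expression for $\Omega_0$ after collecting the factor $2\epsilon^2$. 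The computation for $\Omega_1(X)$ follows the same pattern: evaluate the inner curvature $R^N(d\varphi(X),d\varphi(X_j))\tau(\varphi)$, feed it into $R^N(\,\cdot\,,\tau(\varphi))d\varphi(X_j)$, expand using \eqref{tensor-curvature-s^n}, and sum over $j$; here one additionally uses $\langle Z,X\rangle=\langle\tau(\varphi),d\varphi(X)\rangle$ to recognise $Z^\flat$ and the identity $\sum_j\langle\tau(\varphi),d\varphi(X_j)\rangle^2=|Z|^2$, and the four resulting summands match exactly the four terms in the claimed formula for $\Omega_1$.

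The content is entirely mechanical, so the only real obstacle is bookkeeping: each of $\Omega_0,\Omega_1$ involves $R^N$ applied twice, so every expansion produces products of four $d\varphi(X_i)$-type factors carrying two summation indices, and one must keep the antisymmetry signs of \eqref{tensor-curvature-s^n} straight and correctly identify which index survives each contraction. Working at a point with a geodesic frame around it removes all connection terms and makes these contractions transparent; with that in place the verification is a short and routine manipulation, and no further geometric input is needed.
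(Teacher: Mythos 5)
Your proposal is correct and follows essentially the same route as the paper: $\xi_1$ vanishes because the curvature of a space form is parallel, and $\Omega_0$, $\Omega_1$ are obtained by substituting \eqref{tensor-curvature-s^n} twice, summing over the frame, and recognising the contractions $\sum_i\langle\tau(\varphi),d\varphi(X_i)\rangle d\varphi(X_i)=d\varphi(Z)$, $\sum_i|d\varphi(X_i)|^2=|d\varphi|^2$ and $\sum_i\langle\tau(\varphi),d\varphi(X_i)\rangle^2=|Z|^2$. The four-term expansions you describe match the paper's computation term by term.
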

\begin{proof}
By assumption \(N\) has constant sectional curvature and that implies \(\xi_1=0\).
By a direct calculation we find:
\begin{align*}
\Omega_0=&\epsilon \langle R^N(d\varphi(X_i),d\varphi(X_j))\tau(\varphi),d\varphi(X_j)\rangle d\varphi(X_i)  \\
&-\epsilon \langle R^N(d\varphi(X_i),d\varphi(X_j))\tau(\varphi),d\varphi(X_i)\rangle d\varphi(X_j) \\
=&\epsilon^2\big(
\langle\tau(\varphi),d\varphi(X_j)\rangle\langle d\varphi(X_i),d\varphi(X_j)\rangle d\varphi(X_i) \\
&-\langle\tau(\varphi),d\varphi(X_i)\rangle\langle d\varphi(X_j),d\varphi(X_j)\rangle d\varphi(X_i) \\
&-\langle\tau(\varphi),d\varphi(X_j)\rangle\langle d\varphi(X_i),d\varphi(X_i)\rangle d\varphi(X_j) \\
&+\langle\tau(\varphi),d\varphi(X_i)\rangle\langle d\varphi(X_j),d\varphi(X_i)\rangle d\varphi(X_j)
\big)\\
=&2\epsilon^2\big(
Z^j\langle d\varphi(X_i),d\varphi(X_j)\rangle d\varphi(X_i)-Z^i\langle d\varphi(X_j),d\varphi(X_j)\rangle d\varphi(X_i)
\big)\\
=&2\epsilon^2\big(
\trace\langle d\varphi(\cdot),d\varphi(Z)\rangle d\varphi(\cdot)-|d\varphi|^2d\varphi(Z)\big).
\end{align*}

In addition, we obtain
\begin{align*}
\Omega_1(X_i)=&\epsilon\big(\langle d\varphi(X_j),\tau(\varphi)\rangle R^N(d\varphi(X_i),d\varphi(X_j))\tau(\varphi)
-\langle d\varphi(X_j),R^N(d\varphi(X_i),d\varphi(X_j))\tau(\varphi)\rangle\tau(\varphi)
\big) \\
=&\epsilon^2\big(
\langle d\varphi(X_j),\tau(\varphi)\rangle\langle\tau(\varphi),d\varphi(X_j)\rangle d\varphi(X_i)
-\langle d\varphi(X_j),\tau(\varphi)\rangle\langle\tau(\varphi),d\varphi(X_i)\rangle d\varphi(X_j) \\
&
-\langle\tau(\varphi),d\varphi(X_j)\rangle\langle d\varphi(X_j),d\varphi(X_i)\rangle\tau(\varphi)
+\langle\tau(\varphi),d\varphi(X_i)\rangle\langle d\varphi(X_j),d\varphi(X_j)\rangle\tau(\varphi)
\big) \\
=&\epsilon^2(|Z|^2d\varphi(X_i)-\langle Z,X_i\rangle d\varphi(Z)-\langle d\varphi(Z),d\varphi(X_i)\rangle\tau(\varphi)
+\langle Z,X_i\rangle|d\varphi|^2\tau(\varphi)),
\end{align*}
where we used the expression for \(Z\) in the last step.

\end{proof}

\subsection{Some geometric applications}\label{Sub-Sect-noncompact}
So far we have studied the Eells-Sampson \(4\)-energy and its critical points
in the case of a compact domain \(M\). However, we can extend our results to the case of
a noncompact domain \(M\). To this purpose, we consider all compact subsets \(D\subset M\)
with smooth boundary and define
\begin{align*}
E_4^{ES}(\varphi;D)=\frac{1}{2}\int_D|(d^\ast+d)^4(\varphi)|^2\dv.
\end{align*}
For each such subset \(D\) we consider all smooth variations \(\Phi=\{\varphi_t\}_t\) of \(\varphi\)
such that \(\varphi_t=\varphi\) on \(M\setminus D\) for any \(t\), that is we consider
all variations which have their support in \(D\).
All terms of the form \(\operatorname{div} Y\) which appear in the derivation
of the Euler-Lagrange equation for \(E^{ES}_4\) have the property that \(Y\)
contains the variation vector field \(V\) or its covariant derivatives of first or second order. For this reason \(Y\) vanishes on \(M\setminus D\)
and, by continuity, on its closure \(\overline{M\setminus D}\).
Consequently, \(Y\) vanishes on the boundary of \(D\).
Finally, using the divergence theorem, we conclude that all the results which we have proved in the compact case also hold in the case of a noncompact
domain.
Now, we recall that
\begin{align}\label{***}
\left . \frac{d}{dt}\widehat E_4(\varphi_t)\right |_{t=0}=\frac{1}{2}\int_M
\langle\nabla^\Phi_{\frac{\partial}{\partial t}(0,p)}R^\Phi(X_i,X_j)\tilde\tau,R^\varphi(X_i,X_j)\tau(\varphi)\rangle\dv.
\end{align}
In particular, we observe that if \(R^\varphi(X,Y)\tau(\varphi)=0\) for any \(X,Y\in C(TM)\), then $\varphi$ is an absolute minimum for $\widehat{E}_4(\varphi)$ and so, from \eqref{***}, we recover that it is a critical point for $\widehat{E}_4(\varphi)$.
By way of summary, we have proved that the following proposition is true for arbitrary \(M\) (compact or noncompact):
\begin{proposition}\label{pro:d2t0}
Let \(\varphi\colon (M^m,g)\to (N^n,h)\) be a smooth map.
Assume that \(R^\varphi(X,Y)\tau(\varphi)=0\) for any \(X,Y\in C(TM)\).
Then \(\varphi\) is a critical point of \(E^{ES}_4\) if and only if it is a critical point of \(E_4\).
\end{proposition}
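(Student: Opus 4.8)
The plan is to exploit the splitting $E_4^{ES}(\varphi)=E_4(\varphi)+\widehat E_4(\varphi)$ recorded in \eqref{ES-4-energia}, where $\widehat E_4(\varphi)=\frac{1}{2}\int_M|d^2\tau(\varphi)|^2\dv$, and to show that under the hypothesis $R^\varphi(X,Y)\tau(\varphi)=0$ the term $\widehat E_4$ contributes nothing at $\varphi$, neither to the value of the functional nor to its first variation. Once this is established, the first variation of $E_4^{ES}$ at $\varphi$ coincides with that of $E_4$ for every variation vector field $V$, and the equivalence of the two criticality conditions follows at once.

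First I would recall from Section~\ref{firstresults} that $d^2\tau(\varphi)=R^\varphi\wedge\tau(\varphi)$, so the hypothesis is equivalent to $d^2\tau(\varphi)\equiv0$ and in particular $\widehat E_4(\varphi)=0$. Since $\widehat E_4\geq0$ on all of $C^\infty(M,N)$, the map $\varphi$ is an absolute minimum of $\widehat E_4$, hence a critical point of it, so $\left.\frac{d}{dt}\widehat E_4(\varphi_t)\right|_{t=0}=0$ for every smooth variation $\{\varphi_t\}$. One can also read this directly off the first variation formula \eqref{***}: its integrand carries the factor $R^\varphi(X_i,X_j)\tau(\varphi)$, evaluated at the base map $\varphi$ and hence zero, while the remaining factor $\nabla^\Phi_{\frac{\partial}{\partial t}(0,p)}R^\Phi(X_i,X_j)\tilde\tau$ is a well-defined section of $\varphi^{-1}TN$. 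The point worth stressing is that only the value of $R^\varphi(\cdot,\cdot)\tau(\varphi)$ at $t=0$ enters \eqref{***}; no information on the nearby maps $\varphi_t$ is needed.

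Next I would combine this with the Euler-Lagrange computation of Theorem~\ref{Main-EL-equations-tau4ES}: for an arbitrary variation with variation field $V$,
\[
\left.\frac{d}{dt}E_4^{ES}(\varphi_t)\right|_{t=0}=\left.\frac{d}{dt}E_4(\varphi_t)\right|_{t=0}+\left.\frac{d}{dt}\widehat E_4(\varphi_t)\right|_{t=0}=-\int_M\langle\tau_4(\varphi),V\rangle\dv,
\]
since the last summand vanishes identically in $V$ by the previous step. Equivalently, $\hat{\tau}_4(\varphi)=0$, and hence $\tau_4^{ES}(\varphi)=\tau_4(\varphi)$ by \eqref{tau-4-ES-general-target}. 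Therefore $\varphi$ is a critical point of $E_4^{ES}$ if and only if $\tau_4(\varphi)=0$, that is, if and only if $\varphi$ is a critical point of $E_4$. Finally, the discussion preceding the statement already records that every divergence term arising in the derivation of the Euler-Lagrange equation has a potential vanishing on $\overline{M\setminus D}$, so the argument applies verbatim whether $M$ is compact or not.

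There is essentially no serious obstacle in this argument; the only point requiring a little care is the logical step that upgrades ``$\varphi$ minimizes $\widehat E_4$'' to ``the first variation of $\widehat E_4$ at $\varphi$ vanishes for every $V$'' and thence to ``$\hat{\tau}_4(\varphi)=0$'', which is precisely what the explicit shape of \eqref{***} makes transparent.
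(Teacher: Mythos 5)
Your argument is correct and is essentially the paper's own proof: both use the splitting $E_4^{ES}=E_4+\widehat E_4$, observe that the hypothesis makes $\varphi$ an absolute minimum of $\widehat E_4$, and read off the vanishing of the first variation of $\widehat E_4$ directly from \eqref{***} because its integrand carries the factor $R^\varphi(X_i,X_j)\tau(\varphi)$ evaluated at the base map. The extra observations (that consequently $\hat\tau_4(\varphi)=0$, and that the divergence terms vanish on $\overline{M\setminus D}$ in the noncompact case) match the surrounding discussion in the paper.
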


\begin{corollary}
Let \(\varphi\colon (M^m,g)\to N^n(\epsilon)\) be a smooth map.
Assume that \(\tau(\varphi)\) is orthogonal to the image of the map.
Then \(\varphi\) is ES-4-harmonic if and only if it is \(4\)-harmonic.
In particular, if \(\varphi\colon M^m\to N^n(\epsilon)\) is an isometric immersion,
then it is ES-4-harmonic if and only if it is \(4\)-harmonic.
\end{corollary}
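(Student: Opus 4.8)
The plan is to deduce the corollary as a direct consequence of Proposition~\ref{pro:d2t0}, so the only real work is to verify that the stated hypothesis forces \(R^\varphi(X,Y)\tau(\varphi)=0\) for all \(X,Y\in C(TM)\). First I would recall that, by \eqref{d2-formula}, for any \(X,Y\in C(TM)\) one has \(R^\varphi(X,Y)\tau(\varphi)=R^N(d\varphi(X),d\varphi(Y))\tau(\varphi)\); since the target is the space form \(N^n(\epsilon)\), plugging \eqref{tensor-curvature-s^n} into this gives
\begin{align*}
R^\varphi(X,Y)\tau(\varphi)=\epsilon\big(\langle d\varphi(Y),\tau(\varphi)\rangle\, d\varphi(X)-\langle d\varphi(X),\tau(\varphi)\rangle\, d\varphi(Y)\big).
\end{align*}
The assumption that \(\tau(\varphi)\) is orthogonal to the image of \(\varphi\) means precisely that, at every point, \(\langle \tau(\varphi),d\varphi(X)\rangle=0\) for every \(X\in C(TM)\); hence the right-hand side above vanishes identically, i.e.\ \(R^\varphi(X,Y)\tau(\varphi)=0\). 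Proposition~\ref{pro:d2t0} then applies (and it holds for arbitrary \(M\), compact or not), giving that \(\varphi\) is a critical point of \(E_4^{ES}\) if and only if it is a critical point of \(E_4\); that is exactly the first assertion.

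For the ``in particular'' statement I would argue that an isometric immersion automatically satisfies the orthogonality hypothesis: indeed \(\tau(\varphi)=\trace\nabla d\varphi\), and \(\nabla d\varphi\) is the second fundamental form of the submanifold \(\varphi(M)\subset N^n(\epsilon)\), which takes values in the normal bundle. Hence \(\tau(\varphi)\) is everywhere normal to \(\varphi(M)\), in particular orthogonal to the image of \(\varphi\), and the conclusion follows from the first part applied to this \(\varphi\).

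I do not expect any serious obstacle here; the one point deserving (minor) care is that orthogonality of \(\tau(\varphi)\) to the image does \emph{not} in general imply \(R^\varphi(\cdot,\cdot)\tau(\varphi)=0\) — it is the explicit space-form expression \eqref{tensor-curvature-s^n} for the curvature of the target that makes this implication work. Once that is observed, everything reduces to a one-line invocation of Proposition~\ref{pro:d2t0}.
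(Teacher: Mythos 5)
Your proof is correct and follows essentially the same route as the paper: the orthogonality hypothesis combined with the space-form curvature formula \eqref{tensor-curvature-s^n} forces $R^\varphi(X,Y)\tau(\varphi)=0$, after which Proposition~\ref{pro:d2t0} applies (the paper phrases this as a ``direct consequence of \eqref{tensor-curvature-s^n}'' and also offers an equivalent computation showing $\widehat E_4(\varphi)=0$ via the vector field $Z$). The ``in particular'' step for isometric immersions is handled identically.
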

\begin{proof}
Essentially, this corollary is a direct consequence of \eqref{tensor-curvature-s^n}. Alternatively, we can prove it by using our vector field $Z$ as follows:
\begin{align*}
\frac{1}{2}|R^N(d\varphi(X_i),d\varphi(X_j))\tau(\varphi)|^2=&\epsilon^2(|Z|^2|d\varphi|^2-Z^iZ^j\langle d\varphi(X_i),d\varphi(X_j)\rangle) \\
=&\epsilon^2(|Z|^2|d\varphi|^2-|d\varphi(Z)|^2\rangle).
\end{align*}
Now, if \(\tau(\varphi)\) is orthogonal to the image of the map, then \(Z=\langle d\phi(X_k),\tau(\varphi)\rangle X_k=0\) and consequently \(\widehat E_4(\varphi)=0\).
\end{proof}
\subsection{Conformal deformations and $ES-4$-harmonic metrics}\label{Sub-conf-metrics}
In their paper \cite{Baird-K} the authors introduced the notion of a biharmonic metric as follows. Let us consider the identity map ${\rm Id}: (M, g) \to (M, g)$: they say that a conformally equivalent metric $\tilde{g} = e^{2\gamma} g$, where $\gamma$ denotes a smooth function on $M$, is
a\textit{ biharmonic metric} (with respect to $g$) if the identity map
\begin{equation}\label{Id-tilda}
\tilde{{\rm Id}}: (M, \tilde{g}) \to (M, g)
\end{equation}
is biharmonic. There turns out to be an interesting connection between the construction of biharmonic metrics and isoparametric functions. In particular, Baird and Kamissoko proved that, \textit{if $ (M^m, g)$ ($m\neq2$) is an Einstein manifold and
$\tilde{g} = e^{2\gamma} g$ is biharmonic, then $\gamma$ is an isoparametric function. Conversely, given an
isoparametric function $f$ on $M$, there exists a local reparametrization $\gamma=\gamma(f)$ which defines a biharmonic metric.}
In a similar spirit, the aim of this subsection is to introduce the notion of an $ES-4$-harmonic metric and compute the relevant Euler-Lagrange equation using our general results for $ES-4$-harmonic maps. More precisely, let us assume that the manifold $(M,g)$ in \eqref{Id-tilda} is a space form $N(\epsilon)$ of constant sectional curvature $\epsilon$. We shall consider several different differential operators: the symbol $\,\tilde{}\,$ over an operator indicates that it must be computed with respect to the metrics $\tilde{g}$ in the domain and $g$ in the codomain. If the $\,\tilde{}\,$ is omitted, it means that we are considering an operator defined by means of $g$ both in the domain and the target. Now, in order to describe our program, it is convenient to recall a few basic general facts (see \cite{Baird-K}). Let $\varphi:(M^m,g) \to (N^n,h)$ be a smooth map and $\tilde{g}=e^{2\gamma}g$ a metric conformally equivalent to $g$. Then, setting $\tilde{\varphi}:(M^m,\tilde{g}) \to (N^n,h)$, we have:
\begin{equation*}
\label{tau-tilde-general-case}
\tau(\tilde{\varphi})=e^{-2\gamma}\, \left [\tau(\varphi)+(m-2) d\varphi(\grad \gamma )\right ]\,.
\end{equation*}
Moreover, for any $V\in \Gamma(\varphi^{-1}TN)$, we have:
\begin{equation}\label{rough-laplacian-tilde-general-case}
\tilde{\Delta}=\tilde{\overline{\Delta}}=e^{-2\gamma}\, \left [\overline{\Delta}V-(m-2) \nabla^{\varphi}_{\grad \gamma} V\right ]\,.
\end{equation}
We observe that, in the special case of maps as in \eqref{Id-tilda}, the tension field, which we shall denote by $\tilde{\tau}$, assumes the following simple expression:
\begin{equation}\label{tau-tilde-our-case}
\tilde{\tau}=\tau(\tilde{{\rm Id}})=(m-2)\,e^{-2\gamma}\, \grad \gamma \,.
\end{equation}
Now, it is natural to give the following
\begin{definition} Let $(M^m,g)$, $m \neq2$, be a Riemannian manifold. We say that a conformally equivalent metric $\tilde{g} = e^{2\gamma} g$ is
an \textit{$ES-4$-harmonic metric} (with respect to $g$) if the identity map \eqref{Id-tilda} is $ES-4$-harmonic. 
\end{definition}
As an application of Theorems\link\ref{Main-EL-equations-tau4ES} and \ref{Prop-space-forms}, we obtain the following description of $ES-4$-harmonic metrics on space forms:
\begin{proposition}\label{theorem-ES-4-harmonic-metrics} Let $(M^m,g)$, $m \neq2$, be a space form $N(\epsilon)$. Then a conformally equivalent metric $\tilde{g} = e^{2\gamma} g$ is
an $ES-4$-harmonic metric if and only if  
\begin{equation}\label{tau-4-ES-explicit-conformal-def}
\tau_4^{ES}(\tilde{{\rm Id}})=\tau_4(\tilde{{\rm Id}})+\hat{\tau}_4(\tilde{{\rm Id}})=0 \,,
\end{equation}
where, in terms of the tension field $\tilde{\tau}$ in \eqref{tau-tilde-our-case}, we have:
\begin{eqnarray}\label{tau-4-explicit-conformal-def}\nonumber
\tau_4(\tilde{{\rm Id}})&=&\tilde{\Delta}^3 \tilde{\tau}+ \epsilon e^{-2\gamma}\Big [(1-m)\tilde{\Delta}^2 \tilde{\tau}-\nabla_{\tilde{\tau}}\tilde{\Delta} \tilde{\tau}-\nabla_{\tilde{\Delta} \tilde{\tau}}\tilde{\tau}\\
&&+ \left ({\rm div}\tilde{\Delta} \tilde{\tau} \right )  \tilde{\tau}+\left ({\rm div}\tilde{\tau} \right)\tilde{\Delta} \tilde{\tau}\Big ]
\end{eqnarray}
and
\begin{eqnarray}\label{tau-hat-explicit-conformal-def}\nonumber
\hat{\tau}_4(\tilde{{\rm Id}})&=&(m-1)\epsilon^2\,\tilde{\Delta}\left( e^{-4\gamma}\,\tilde{\tau}\right )\\
&&+\epsilon^2 e^{-4\gamma}\Big [ (m-2)({\rm div}\,\tilde{\tau})\tilde{\tau}+(m-2)\nabla_{\tilde{\tau}}\tilde{\tau}+(m-4)|\tilde{\tau}|^2 \grad \gamma \\ \nonumber
&&+(m-2)(m-4) \langle \tilde{\tau}, \grad \gamma \rangle\tilde{\tau} + \grad \big ( |\tilde{\tau}|^2\big)-(m-1)^2\epsilon e^{-2\gamma}\,\tilde{\tau} \Big ],
\end{eqnarray}
where $| (\cdot)|$ and $\langle  (\cdot), (\cdot)\rangle$ are computed with respect to $g$.
\end{proposition}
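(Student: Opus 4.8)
The plan is to specialize the general Euler--Lagrange formula of Theorem~\ref{Main-EL-equations-tau4ES}, together with the space-form simplifications of Theorem~\ref{Prop-space-forms}, to the identity map $\tilde{{\rm Id}}:(M,\tilde g)\to(M,g)$ where $(M,g)=N(\epsilon)$. The key point is that for this map the differential $d\,\tilde{{\rm Id}}$ is essentially the identity, so every expression of the form $d\varphi(X)$ becomes simply $X$ (viewed in the appropriate bundle), $|d\varphi|^2=m$, and the stress-energy vector field $Z=\langle \tau(\varphi),d\varphi(X_k)\rangle X_k$ becomes $Z=\tilde\tau$ (using \eqref{tau-tilde-our-case} to identify $\tilde\tau$ with $(m-2)e^{-2\gamma}\grad\gamma$, a genuine vector field on $M$). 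Thus I would first rewrite $\tau_4(\varphi)$ from \eqref{tau-4-ES-general-target}: the ambient curvature is constant $=\epsilon$, so each term $R^N(d\varphi(X_i),W)d\varphi(X_i)$ collapses via \eqref{tensor-curvature-s^n} to $\epsilon(-\langle X_i,W\rangle X_i + \langle d\varphi(X_i),d\varphi(X_i)\rangle\,\cdot)$-type contractions; doing this for the four summands of $\tau_4(\varphi)$ and replacing $\overline\Delta$ by $\tilde{\overline\Delta}=\tilde\Delta$ yields \eqref{tau-4-explicit-conformal-def}. One must be careful that here $\overline\Delta$, div and $\grad$ inside $\tau_4(\tilde{{\rm Id}})$ are the operators associated to the \emph{domain} metric $\tilde g$ acting on $\tilde\tau$; the relation \eqref{rough-laplacian-tilde-general-case} is the bookkeeping device that keeps this consistent.

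For the lower-order part $\hat\tau_4(\varphi)$ I would start from the already-simplified Theorem~\ref{Prop-space-forms}. Since $N(\epsilon)$ has constant curvature, $\xi_1=0$, and $\Omega_0$, $\Omega_1$ are the explicit expressions in $Z$ and $d\varphi$ listed there. Substituting $d\varphi(X)\rightsquigarrow X$, $|d\varphi|^2=m$, $Z=\tilde\tau$ gives
\[
\Omega_0=2\epsilon^2\big(\tilde\tau-m\,\tilde\tau\big)\cdot(\text{appropriate contraction})=2(1-m)\epsilon^2\,\tilde\tau
\]
after the trace $\trace\langle d\varphi(\cdot),d\varphi(Z)\rangle d\varphi(\cdot)=Z=\tilde\tau$, and similarly $\Omega_1$ reduces to a $1$-form built from $\tilde\tau$, $\tilde\tau^\flat$ and $\grad\gamma$. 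Then I would feed these into $\hat\tau_4(\varphi)=-\tfrac12\big(2\xi_1+2d^\ast\Omega_1+\overline\Delta\Omega_0+\trace R^N(d\varphi(\cdot),\Omega_0)d\varphi(\cdot)\big)$: the term $\overline\Delta\Omega_0$ produces the leading piece $(m-1)\epsilon^2\tilde\Delta(e^{-4\gamma}\tilde\tau)$ once one writes $\tilde\tau=(m-2)e^{-2\gamma}\grad\gamma$ and tracks the $e^{-4\gamma}$ factors coming from two curvature contractions; the term $\trace R^N(d\varphi(\cdot),\Omega_0)d\varphi(\cdot)$ contributes $(m-1)^2\epsilon^3 e^{-2\gamma}\tilde\tau$-type terms via \eqref{tensor-curvature-s^n}; and $d^\ast\Omega_1$ is the source of the divergence terms $({\rm div}\,\tilde\tau)\tilde\tau$, $\nabla_{\tilde\tau}\tilde\tau$, the $|\tilde\tau|^2\grad\gamma$ and $\grad(|\tilde\tau|^2)$ contributions, using the formula $d^\ast\omega=-(\nabla_{X_k}\omega)(X_k)$ for a $1$-form $\omega$, the compatibility of $\nabla$ with the metric, and the identity $\langle\tilde\tau,\grad\gamma\rangle$ appearing whenever $\grad\gamma$ pairs against $\tilde\tau$. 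Collecting everything and multiplying out the $\epsilon^2 e^{-4\gamma}$ prefactor gives \eqref{tau-hat-explicit-conformal-def}. Finally, by Theorem~\ref{Main-EL-equations-tau4ES} the identity map is $ES$-$4$-harmonic iff $\tau_4^{ES}(\tilde{{\rm Id}})=\tau_4(\tilde{{\rm Id}})+\hat\tau_4(\tilde{{\rm Id}})=0$, which is exactly \eqref{tau-4-ES-explicit-conformal-def}.

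The main obstacle I anticipate is purely computational bookkeeping rather than conceptual: there are two distinct metrics in play ($\tilde g$ on the domain, $g$ on the target), and one must consistently decide which metric governs each trace, each $|\cdot|$, each $\grad$, $\nabla$ and ${\rm div}$ — the statement specifies that $|\cdot|$ and $\langle\cdot,\cdot\rangle$ in \eqref{tau-hat-explicit-conformal-def} are taken with respect to $g$, so conversions via $\tilde g=e^{2\gamma}g$ generate the various powers $e^{-2\gamma}$, $e^{-4\gamma}$ and the $(m-2)$, $(m-4)$ coefficients, and it is easy to drop or misplace such factors. A secondary subtlety is that several terms (e.g. the curvature contractions in $\tau_4$ and in $\trace R^N(d\varphi(\cdot),\Omega_0)d\varphi(\cdot)$) produce \emph{both} a $\grad\gamma$-proportional piece and a $\tilde\tau$-proportional piece, and since $\tilde\tau$ is itself proportional to $e^{-2\gamma}\grad\gamma$, one has genuine freedom in how to present the answer; matching the specific grouping in \eqref{tau-4-explicit-conformal-def}--\eqref{tau-hat-explicit-conformal-def} requires using \eqref{tau-tilde-our-case} in the right direction at the right moment. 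Once the substitutions $d\varphi\rightsquigarrow{\rm Id}$, $Z=\tilde\tau$, $|d\varphi|^2=m$ are made systematically and the divergence-theorem terms $\operatorname{div}Y$ are discarded (justified for arbitrary $M$ by the discussion preceding Proposition~\ref{pro:d2t0}), the computation is a finite, if lengthy, manipulation.
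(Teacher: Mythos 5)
Your proposal is correct and follows essentially the same route as the paper, which likewise proves Proposition~\ref{theorem-ES-4-harmonic-metrics} by specializing Theorems~\ref{Main-EL-equations-tau4ES} and \ref{Prop-space-forms} to $\tilde{{\rm Id}}$ and simply listing the resulting intermediate expressions for the curvature contractions, $\Omega_0$, $\Omega_1$, $\tilde{\Delta}\Omega_0$, $R^{N(\epsilon)}(X_i,\Omega_0)X_i$ and $d^\ast\Omega_1$. One bookkeeping correction: since the frame $\{X_i\}$ is orthonormal for $\tilde g$ while the target carries $g$, one has $|d\,\tilde{{\rm Id}}|^2=m\,e^{-2\gamma}$ and $Z=e^{-2\gamma}\tilde{\tau}$ rather than $m$ and $\tilde{\tau}$; these are precisely the factors that produce the $e^{-4\gamma}$ in $\Omega_0=2\epsilon^2e^{-4\gamma}(1-m)\tilde{\tau}$ and the $e^{-6\gamma}$ in $\trace R^{N(\epsilon)}(\cdot,\Omega_0)(\cdot)=2(m-1)^2\epsilon^3e^{-6\gamma}\tilde{\tau}$, so taking $|d\varphi|^2=m$ and $Z=\tilde{\tau}$ at face value would misplace exactly the conformal weights you warn about.
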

\begin{proof} The proof amounts to the explicit computation of $\tau_4^{ES}(\tilde{{\rm Id}})$ according to the general formulas obtained in Theorems\link\ref{Main-EL-equations-tau4ES} and \ref{Prop-space-forms}. Since no new ideas are involved in this type of calculations, we limit ourselves to summarize the intermediate steps which can then be added up together to yield \eqref{tau-4-explicit-conformal-def} and \eqref{tau-hat-explicit-conformal-def}. In particular, using the explicit form \eqref{tensor-curvature-s^n} of the sectional curvature tensor field we obtain:
\begin{eqnarray*}
R^{N(\epsilon)}\big(X_i,\tilde{\Delta}^2\tilde{\tau}\big )X_i&=&\epsilon e^{-2\gamma}\,(1-m)\tilde{\Delta}^2 \tilde{\tau} \\
-R^{N(\epsilon)}\big(\nabla_{X_i}\tilde{\Delta}\tilde{\tau},\tilde{\tau}\big)X_i&=&\epsilon e^{-2\gamma}\,\big [-\nabla_{\tilde{\tau}}\tilde{\Delta} \tilde{\tau}-\nabla_{\tilde{\Delta} \tilde{\tau}}\tilde{\tau} \big ] \\
R^{N(\epsilon)}\big(\tilde{\Delta}\tilde{\tau},\nabla_{X_i}\tilde{\tau}\big )X_i&=&\epsilon e^{-2\gamma}\,\big [\left ({\rm div}\tilde{\Delta} \tilde{\tau} \right )  \tilde{\tau}+\left ({\rm div}\tilde{\tau} \right)\tilde{\Delta} \tilde{\tau} \big ]
\end{eqnarray*}
As for the part which concerns $\hat{\tau}_4(\tilde{{\rm Id}})$, we have:
\begin{eqnarray*}
\Omega_0&=&2 \epsilon^2 \, e^{-4\gamma}\,(1-m)\, \tilde{\tau}\\
\Omega_1(X)&=&\epsilon^2 \, e^{-2\gamma}\,\big [ | \tilde{\tau}|^2 X+(m-2)\langle  \tilde{\tau},X\rangle  \tilde{\tau}\big ] \\
\tilde{\Delta}\Omega_0&=&2(1-m)\epsilon^2\,\tilde{\Delta}\left( e^{-4\gamma}\,\tilde{\tau}\right )\\
R^{N(\epsilon)}\big(X_i,\Omega_0\big)X_i&=&2(m-1)^2 \epsilon^3 \,e^{-6\gamma}\,\tilde{\tau}
\end{eqnarray*}
and
\begin{eqnarray*}
d^*\Omega_1&=&- \epsilon^2 \, e^{-4\gamma}\,\Big [ (m-2) ({\rm div}\tilde{\tau})\tilde{\tau}+(m-2)\nabla_{\tilde{\tau}}\tilde{\tau}+(m-4)|\tilde{\tau}|^2 \grad \gamma \\
&& +(m-2)(m-4) \langle \tilde{\tau},\grad \gamma \rangle \tilde{\tau}+ \grad \big ( |\tilde{\tau}|^2\big )\big ]\,.
\end{eqnarray*}
\end{proof}
\begin{remark}Using \eqref{rough-laplacian-tilde-general-case} it is possible to express $\hat{\tau}_4(\tilde{{\rm Id}})$ in terms of $\gamma$. In particular, a straightforward computation shows that
\begin{eqnarray}\label{tau-hat-gamma-conformal-def}\nonumber
\hat{\tau}_4(\tilde{{\rm Id}})&=&\epsilon^2 e^{-8\gamma}(m-2)\Big \{(m-1)\overline{\Delta}\grad \gamma \\
&&+\frac{1}{2}(13m-14) \grad \left(|\grad \gamma|^2 \right )-\Big[(m^2+2m-2)\Delta \gamma \\ \nonumber
&&-(m-1)(m^2-4m-32) |\grad \gamma|^2 +\epsilon (m-1)^2\Big ] \grad \gamma \Big\}\,. 
\end{eqnarray}
Inspection of \eqref{tau-hat-gamma-conformal-def} suggests that the equation $\hat{\tau}_4(\tilde{{\rm Id}})=0$ displays some common features with the condition for biharmonic metrics which was obtained and studied in \cite{Baird-K}. In order to illustrate this claim in more detail, let us assume that $\gamma=\gamma(\rho)$, where $\rho$ denotes the distance from a fixed point $p$. Then a routine computation shows that, in the two significant cases, the equation 
$\hat{\tau}_4(\tilde{{\rm Id}})=0$ takes the following form:

Case $\epsilon=1$, $m \geq3$:
\begin{eqnarray}\label{tau-hat-eq-sfera}\nonumber
\gamma ^{(3)}(\rho )&=&\left(m^2-4 m-32\right)  \dot{\gamma }(\rho
   )^3+\left(m^2+2 m-2\right) \cot (\rho )  \dot{\gamma }(\rho )^2\\
   &&-(m-1) \cot
   (\rho )  \ddot{\gamma }(\rho )+ \dot{\gamma }(\rho ) \left[(m+16)  \ddot{\gamma }(\rho
   )+(m-1) \left(\cot ^2(\rho )-1\right)\right]\,,
\end{eqnarray}
where $0<\rho < \pi$ and the metric $\tilde{g}$ admits a smooth extension through the poles if and only if the function $\gamma$ is smooth on $[0,\pi]$ and
\begin{equation}\label{boundary-gamma-sfera}
\gamma^{(2k+1)}(0)=\gamma^{(2k+1)}(\pi)=0 \quad {\rm for \,\, all\,\, }k \geq 0\,.
\end{equation}
Case $\epsilon=-1$, $m \geq3$:
\begin{eqnarray}\label{tau-hat-eq-iperbolic}\nonumber
\gamma ^{(3)}(\rho )&=&\left(m^2-4 m-32\right) \dot{\gamma }(\rho
   )^3+\left(m^2+2 m-2\right) \coth (\rho )  \dot{\gamma }(\rho )^2\\
   &&-(m-1) \coth
   (\rho )  \ddot{\gamma }(\rho )+ \dot{\gamma }(\rho ) \left[(m+16)  \ddot{\gamma }(\rho
   )+(m-1) \left(\coth ^2(\rho )+1\right)\right]\,,
\end{eqnarray}
where $\rho >0$ and the metric $\tilde{g}$ admits a smooth extension through the pole if and only if the function $\gamma$ is smooth on $[0,+\infty)$ and
\[
\gamma^{(2k+1)}(0)=0 \quad {\rm for \,\, all\,\,} k \geq 0\,.
\]
Now, as in \cite{Baird-K}, let us assume that, in the case $\epsilon=1$, $\gamma$ is a function of the isoparametric function $\cos \rho$, i.e., set
\begin{equation}\label{change-variables}
t=\cos \rho \,, \quad \gamma(\rho)=\xi(t)\,,\quad \beta (t)=\xi'(t) \,,
\end{equation}
where $\xi(t)$ is a smooth function on the closed interval $[-1,1]$ (note that this implies that the boundary conditions \eqref{boundary-gamma-sfera} hold). Then equation \eqref{tau-hat-eq-sfera} can be rewritten in terms of the function $\beta(t)$ as follows:
\begin{eqnarray}\label{tau-hat-beta}\nonumber
 \beta''(t)&=& \left(m^2-4 m-32\right)  \beta (t)^3+\left(m^2+3
   m+14\right) \,\frac{t}{t^2-1} \,\beta (t)^2\\
   &&+\beta (t) \left[(m+16) 
   \beta '(t)+\frac{m-2}{t^2-1} \right]-(m+2)\, \frac{t}{t^2-1} \, \beta '(t)\,,
\end{eqnarray}
where $-1 < t < 1$. We observe that \eqref{tau-hat-beta} has the same analytical structure as equation $(10)$ of \cite{Baird-K}. In particular, away from the singular locus $t=\pm 1$, which corresponds to the two focal varieties of the isoparametric function $\cos \rho$, the standard existence theorem for ordinary differential equations guarantees the existence of local solutions of \eqref{tau-hat-beta}. In general, these solutions may not be globally defined: by way of example, a numerical analysis carried out with the software Mathematica suggests that the solution of \eqref{tau-hat-beta} with $m=8$ and initial conditions $\beta(0)=0,\,\beta'(0)=1$ blows up at $\pm t^*$, where $t^* \approx 0.44$. Similar arguments apply to the case $\epsilon=-1$: here \eqref{change-variables} must be replaced by 
\[
t=\cosh \rho \,, \quad \gamma(\rho)=\xi(t)\,,\quad \beta (t)=\xi'(t) \,,
\]
where now $\xi$ is a smooth function on $[1,+\infty)$ and, in terms of $\beta$, \eqref{tau-hat-eq-iperbolic} becomes again \eqref{tau-hat-beta}, but with $t>1$.

To end this subsection, we point out that the derivation of an expression of the type \eqref{tau-hat-gamma-conformal-def} for $\tau_4(\tilde{{\rm Id}})$ requires very long computations and so we omit details in this direction. We just remark that, again in the special case that we assume $\gamma=\gamma(\rho)$, where $\rho$ denotes the distance from a fixed point $p$, we find that the condition $\tau_4^{ES}(\tilde{{\rm Id}})=0$ in \eqref{tau-4-ES-explicit-conformal-def} becomes an ordinary differential equation of order $7$ for the function $\gamma(\rho)$. In particular, this ordinary differential equation turns out to be of the form
\[
\gamma^{(7)}(\rho)=F(\rho,\dot{\gamma},\ldots,\gamma^{(6)})
\]
for a suitable function $F$, not depending on $\gamma$, which is smooth away from $\rho=0$ and the cut locus. Therefore, the standard local existence and uniqueness theorem for ordinary differential equations guarantees the local existence of $ES-4$-harmonic metrics. We refer to Section\link\ref{section-Models} and, in particular, to Remark\link\ref{Remark-ODE}, for a more detailed discussion of problems of this type.
\end{remark}
\subsection{Second variation}
Let us consider a smooth map \(\varphi\colon (M^m,g)\to (N^n,h)\) and, for simplicity, assume that \(M\) is compact.
We consider a two-parameters smooth variation of \(\varphi\),
that is a smooth map
\begin{align*}
\Phi\colon\R\times\R\times M\to N,\qquad (t,s,p)\mapsto \Phi(t,s,p)=\varphi_{t,s}(p)
\end{align*}
such that \(\varphi_{0,0}(p)=\varphi(p)\) for any \(p\in M\).
To a given two-parameters variation of \(\varphi\) we associate the corresponding variation vector fields, i.e., the sections \(V,W\in C(\varphi^{-1}TN)\)
which are defined by
\begin{align*}
 V(p)=&\left .\frac{d}{dt}\varphi_{t,0}(p)\right |_{t=0}\,\in T_{\varphi(p)}N, \\
W(p)=&\left .\frac{d}{ds}\varphi_{0,s}(p)\right |_{s=0}\,\in T_{\varphi(p)}N.
\end{align*}
We will now compute
\begin{align*}
\left.\frac{\partial^2}{\partial t\partial s}\widehat E_4(\varphi_{t,s})\right|_{(t,s)=(0,0)}\,
\end{align*}
starting with
\begin{align*}
\left.\frac{\partial}{\partial s}\widehat E_4(\varphi_{t,s})\right|_{(t,s)=(t,0)}\,
=&\frac{1}{2}\int_M\langle\nabla^\Phi_{\frac{\partial}{\partial s}(t,0,p)}R^\Phi(X_i,X_j)\tilde\tau,R^{\varphi_{t,0}}(X_i,X_j)\tau(\varphi_{t,0})\rangle\dv,
\end{align*}
where \(\tilde\tau\in C(\Phi^{-1}TN)\) is defined by
\begin{align*}
\tilde\tau(t,s,p)=\tau(\varphi_{t,s})_p\in T_{\varphi_{t,s}(p)}N.
\end{align*}
Then we find
\begin{align*}
\left.\frac{\partial^2}{\partial t\partial s}\widehat E_4(\varphi_{t,s})
\right|_{(t,s)=(0,0)}\,=&\frac{1}{2}\int_M
\big(\langle\nabla^\Phi_{\frac{\partial}{\partial t}(0,0,p)}\nabla^\Phi_{\frac{\partial}{\partial s}}R^\Phi(X_i,X_j)\tilde\tau,R^{\varphi}(X_i,X_j)\tau(\varphi)\rangle \\
&+
\langle\nabla^\Phi_{\frac{\partial}{\partial s}(0,0,p)}R^\Phi(X_i,X_j)\tilde\tau,\nabla^\Phi_{\frac{\partial}{\partial t}(0,0,p)} R^\Phi(X_i,X_j)\tilde\tau\rangle\big)\dv.
\end{align*}
Even if  \(R^\varphi(X,Y)\tau(\varphi)=0\) for any \(X,Y\in C(TM)\),
so that \(\varphi\) is a critical point of \(\widehat E_4\), the Hessian of \(\widehat E_4\)
can be different from zero.
Indeed, in this case we have
\begin{align*}
\left.\frac{\partial^2}{\partial t\partial s} \widehat E_4(\varphi_{t,s})
\right|_{(t,s)=(0,0)}\,=&\frac{1}{2}\int_M
\langle\nabla^\Phi_{\frac{\partial}{\partial s}(0,0,p)}R^\Phi(X_i,X_j)\tilde\tau,\nabla^\Phi_{\frac{\partial}{\partial t}(0,0,p)} R^\Phi(X_i,X_j)\tilde\tau\rangle\dv
\end{align*}
and this term will not vanish in general.
We can conclude that, if \(R^\varphi(X,Y)\tau(\varphi)=0\) and \(\varphi\) is a critical
point for both \(E_4^{ES}\) and \(E_4\), then the stability of \(\varphi\) may depend on which of the two functionals we are actually considering. Since, in this case, $\varphi$ is an absolute minimum point for $\widehat{E}_4$, its index computed with respect to $E_4^{ES}$ could be smaller than the one computed using $E_4$.
However, in the case of a one-dimensional domain, there is no difference.
For this reason, in the final part of
this article, we shall focus on the study of the second variation for curves.
\section{Rotationally symmetric maps and conformal diffeomorphisms}\label{section-Models}
In this section we study the functionals $E_r(\varphi)$ and $E_r^{ES}(\varphi)$ in the context of rotationally symmetric maps. The basic difference with respect to Section\link\ref{firstresults} is the fact that for this family of maps $d^2 \tau(\varphi)$ does not necessarily vanish, as we shall see in Proposition\link\ref{prop-d2taunotzero}, where we shall obtain the relevant ordinary differential equation for $ES-4$-harmonicity. We shall also compute the $r$-harmonicity equation for all $r \geq 2$. Then we shall apply these results to the study of conformal diffeomorphisms.

First, let us introduce a family of warped product manifolds which will be suitable for our purposes. We set
\begin{equation}\label{rotationallysymmetricmanifolds}
\left( M,g_M \right )= \left (\s^{m-1} \times I, f^2(\rho)g_{\s^{m-1}}+d\rho^2 \right ),
\end{equation}
where $I\subset \R$ is an open interval and $f(\rho)$ is a smooth function which is positive on $I$.
\begin{remark}\label{remark-interior-I}
In some instances, it may be of interest to extend the analysis through the closure $\overline{I}$ of $I$. By way of example, if $\overline{I}=[0,+\infty)$ and
\begin{equation*}
\label{condizioni-su-f}
\left \{
  \begin{array}{l}
    f(0)=0 \,, \quad f'(0)=1  ; \\
    \,\\
    f^{(2\ell)}(0)=0 \quad {\rm for} \,\, {\rm all } \,\, \ell \geq 1 \,\, ,\\
  \end{array}
\right .
\end{equation*}
then the manifold \eqref{rotationallysymmetricmanifolds} becomes a \textit{model} in the sense of Greene and Wu (see \cite{GW}). In particular, if $f(\rho)=\rho$ (respectively, $f(\rho)=\sinh \rho$) it is isometric to the Euclidean space $\R^{m}$ (respectively, the hyperbolic space $\H^{m}$). In a similar spirit, if $\overline{I}=[0,\pi]$ and $f(\rho)=\sin \rho$, then we have the Euclidean unit sphere $\s^{m}$.

We point out that all the calculations and results of this section are valid on $I$. In particular, the study of regularity across the loci associated to $\partial I$ (poles or boundary of $M_f$) needs a case by case analysis.
\end{remark}
By way of summary, we shall refer to a manifold as in \eqref{rotationallysymmetricmanifolds} as to a \textit{rotationally symmetric manifold} and, to shorten notation, we shall write $M_{f}$ to denote it.
We work with coordinates $w^j, \rho$ on $M_{f}$, where $w^1,\, \ldots ,\,w^{m-1}$ is a set of local coordinates  on $\s^{m-1}$.
A straightforward computation, based on the well-known formula
\begin{equation*}
\Gamma^k_{ij}= \frac{1}{2}\, g^{k \ell} \left(\frac{\partial g_{j \ell}}{\partial x^i}+ \frac{\partial g_{ \ell i}}{\partial x^j}-\frac{\partial g_{ij}}{\partial x^{\ell}} \right)\,\, ,
\end{equation*}
leads us to establish the following lemma:

\begin{lemma}\label{lemma-christoffels} Let $w^1, \ldots,w^{m-1},\rho$ be local coordinates as above on $M_{f}^m$. Then their associated Christoffel symbols $\Gamma^k_{ij}$ are described by the following table:
\begin{equation*}
\begin{array}{lll}
{\rm (i)}&{\rm If}\, 1 \leq i,j,k \leq m-1: & \Gamma^k_{ij}={}^{\s}\Gamma^k_{ij} \\
{\rm (ii)}&{\rm If}\, 1 \leq i, j \leq m-1: & \Gamma^m_{ij}=\, -\,f(\rho)\,f'(\rho)\,\,(g_{\s})_{ij} \\
{\rm (iii)}&{\rm If}\, 1 \leq i,j \leq m-1: & \Gamma^j_{i m}=\frac{f'(\rho)}{f(\rho)} \,\,\delta_i^j \\
{\rm (iv)}&{\rm If}\, 1 \leq j \leq m: & \Gamma^j_{mm}=0=\Gamma^m_{jm} \,\,,\\
\end{array}
 \end{equation*}
where ${}^{\s}\Gamma^k_{ij}$ and $g_{\s}$ denote respectively the Christoffel symbols and the metric tensor of $\s^{m-1}$ with respect to the coordinates $w^1, \ldots,w^{m-1}$.
\end{lemma}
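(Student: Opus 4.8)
The plan is to compute the Christoffel symbols directly from the standard formula, exploiting the block structure of the warped product metric. First I would record the components of $g_M$ in the coordinates $w^1,\ldots,w^{m-1},\rho$, using indices $i,j,k,\ell$ for the range $1,\ldots,m-1$ and the value $m$ for the $\rho$-direction:
\[
g_{ij}=f^2(\rho)\,(g_{\s})_{ij},\qquad g_{im}=g_{mi}=0,\qquad g_{mm}=1,
\]
together with the inverse
\[
g^{ij}=f^{-2}(\rho)\,(g_{\s})^{ij},\qquad g^{im}=g^{mi}=0,\qquad g^{mm}=1.
\]

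Before running the computation I would isolate three elementary facts that do all the work: (a) because $g$ and $g^{-1}$ are block diagonal, a contraction $g^{k\ell}(\cdots)$ runs only over $\ell\leq m-1$ when $k\leq m-1$, and only over $\ell=m$ when $k=m$; (b) the vector fields $\partial/\partial w^i$ annihilate any function of $\rho$ alone, so acting on $g_{jk}=f^2(g_\s)_{jk}$ they reproduce the intrinsic $w$-derivatives of $(g_\s)_{jk}$ up to the factor $f^2$, whereas $\partial/\partial\rho$ applied to $g_{jk}$ gives $2f(\rho)f'(\rho)(g_\s)_{jk}$; and (c) the components $g_{mm}$, $g_{im}$, $g_{mi}$ are constant, hence killed by every coordinate derivative.

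With these in hand I would substitute into $\Gamma^k_{ij}=\tfrac12 g^{k\ell}(\partial_i g_{j\ell}+\partial_j g_{\ell i}-\partial_\ell g_{ij})$ and treat the four cases of the table. In case (i) only $\ell\leq m-1$ survives by (a), and by (b) the factor $f^{-2}$ from $g^{k\ell}$ cancels the $f^2$ coming from the $w$-derivatives of $g_{j\ell}$, leaving exactly the defining expression for ${}^{\s}\Gamma^k_{ij}$. In case (ii) only $\ell=m$ survives, the $\partial_i$ and $\partial_j$ terms drop by (c), and $-\tfrac12\,\partial_\rho g_{ij}=-f(\rho)f'(\rho)(g_\s)_{ij}$. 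In case (iii) only $\ell\leq m-1$ survives, the $\partial_i g_{m\ell}$ and $\partial_\ell g_{im}$ terms drop by (c), and the remaining $\tfrac12\,g^{j\ell}\,\partial_\rho g_{\ell i}=\tfrac12 f^{-2}(g_\s)^{j\ell}\cdot 2ff'(g_\s)_{\ell i}=(f'/f)\,\delta_i^j$. In case (iv) every term inside the bracket is a coordinate derivative either of a constant component or of a component that does not depend on the variable being differentiated, so it vanishes. Since this is an entirely routine coordinate computation there is no genuine obstacle; the only point requiring a little attention is the bookkeeping of which index ranges each contraction actually runs over, which is precisely what observation (a) settles once and for all.
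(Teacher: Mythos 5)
Your proof is correct and takes exactly the route the paper intends: the paper offers no written proof beyond the remark that the lemma follows from ``a straightforward computation, based on the well-known formula'' for $\Gamma^k_{ij}$, and your case-by-case substitution into that formula, using the block-diagonal structure of the warped-product metric and the fact that only $g_{ij}=f^2(\rho)(g_{\s})_{ij}$ has nonconstant components, is precisely that computation carried out correctly.
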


Now we are in the right position to start our process of computing the quantities and equations which are relevant to the study of our high order energy functionals in the context of maps between two rotationally symmetric manifolds as in \eqref{rotationallysymmetricmanifolds}. More specifically, our first goal is to derive the condition of $r$-harmonicity and $ES-r$-harmonicity for rotationally symmetric maps of the following type:
\begin{equation}\label{rotationallysymmetricmaps-models}\begin{array}{llll}
                                           \varphi_{\alpha} \,:\, &\left (\s^{m-1} \times I, f^2(\rho)g_{\s}+d\rho^2 \right ) &\to &  \left (\s^{m-1} \times I', h^2(\alpha)g_{\s}+d\alpha^2 \right ) \\
                                           &&& \\
                                           &(w,\rho) &\mapsto & (w,\alpha(\rho)) \,\, ,
                                         \end{array}
\end{equation}
where $\alpha(\rho)$ is a smooth function on $I$ with values in $I'$. To denote a rotationally symmetric map as in \eqref{rotationallysymmetricmaps-models} we shall write $\varphi_{\alpha}:M_f \to M_h$ or, if the context is clear, simply $\varphi_\alpha$. Now we begin our work to determine the conditions under which $\varphi_{\alpha}$ is $r$-harmonic ($r \geq 2$). The biharmonicity of rotationally symmetric maps $\varphi_{\alpha}:M_f \to M_h$ was extensively studied in \cite{MOR2}. In particular (see \cite{MOR2}), the tension field of $\varphi_{\alpha}$ is given by:
\begin{equation}\label{tension-field-fi-alfa}
\tau \left (\varphi_{\alpha}\right )= \tau_\alpha (\rho)\, \frac{\partial}{\partial \alpha},
\end{equation}
where
\begin{equation}\label{tension-field-fi-alfa-funzione}
\tau_\alpha =\ddot{\alpha} +(m-1)\, \frac{\dot{f}}{f}\,\dot{\alpha}-\frac{(m-1)}{f^2}\, h(\alpha)\,h'(\alpha)
\end{equation}
and $\cdot$ denotes the derivative with respect to $\rho$. This is the starting point to proceed to the explicit computation of the $r$-energy functional for rotationally symmetric maps $\varphi_\alpha$. We begin with some lemmata whose proofs are based on the calculation of several covariant derivatives by means of Lemma\link\ref{lemma-christoffels}.
Let
$\left\{\partial \slash \partial w^1,\dots,\partial \slash \partial w^{m-1} \right\}
$ be a local coordinate frame field on $\s^{m-1}$ and denote by $dw^1, \dots,dw^{m-1}$ the set of dual 1-forms. We observe that $d\varphi_\alpha( \partial \slash \partial w^j)= \partial \slash \partial w^j$, so that with a slight abuse of notation we shall use $\partial \slash \partial w^j$ both for the domain and the codomain of $\varphi_\alpha$.
We shall write $\tau$ to denote $\tau(\varphi_\alpha)$. Now, $d\tau$, precisely as $d \varphi_\alpha$, is a $1$-form with values in the vector bundle $\varphi_\alpha ^{-1}TM_h$ or, equivalently, a section of $T^*M_f\otimes \varphi_\alpha ^{-1}TM_h$. Our first relevant lemma is:
\begin{lemma}\label{lemma-dtau-explicit}
\begin{equation}\label{d-tau-inlocalcharts}
d \tau= \sum_{j=1}^{m-1}\,\left( \tau_{\alpha}\,\frac{h'( \alpha)}{h( \alpha)}\right)\,dw^j\otimes \frac{\partial}{\partial w^j}\,+\, \dot{\tau}_{\alpha}\, d\rho \otimes \frac{\partial}{\partial \alpha},
\end{equation}
where $\tau_{\alpha}(\rho)$ is the function given in \eqref{tension-field-fi-alfa-funzione} and again $\cdot{}$ denotes the derivative with respect to\link$\rho$.
\end{lemma}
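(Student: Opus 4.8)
The plan is to evaluate the bundle-valued $1$-form $d\tau=d^{\varphi_\alpha}\tau$ on the coordinate frame $\{\partial/\partial w^1,\dots,\partial/\partial w^{m-1},\partial/\partial\rho\}$ of $M_f$, using that for a $0$-form (i.e.\ a section) one simply has $d\tau(X)=\nabla^{\varphi_\alpha}_X\tau$. Since $\tau=\tau_\alpha(\rho)\,\partial/\partial\alpha$ with $\tau_\alpha$ depending only on $\rho$, the Leibniz rule reduces everything to the two covariant derivatives $\nabla^{\varphi_\alpha}_{\partial/\partial w^j}(\partial/\partial\alpha)$ and $\nabla^{\varphi_\alpha}_{\partial/\partial\rho}(\partial/\partial\alpha)$, together with the term $\dot\tau_\alpha\,\partial/\partial\alpha$ coming from differentiating the coefficient in the $\rho$-direction (in the $w^j$-directions the coefficient $\tau_\alpha(\rho)$ is annihilated).

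To compute these two covariant derivatives I would invoke the defining property of the pull-back connection, $\nabla^{\varphi_\alpha}_X(Y\circ\varphi_\alpha)=\nabla^{M_h}_{d\varphi_\alpha(X)}Y$, together with the elementary identities $d\varphi_\alpha(\partial/\partial w^j)=\partial/\partial w^j$ and $d\varphi_\alpha(\partial/\partial\rho)=\dot\alpha\,\partial/\partial\alpha$, valid for any map of the form \eqref{rotationallysymmetricmaps-models}. This turns the problem into a computation of Christoffel symbols of the \emph{target} warped product $M_h$, which are given by Lemma~\ref{lemma-christoffels} with $f$ replaced by $h$ and $\rho$ replaced by $\alpha$. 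In particular, part~(iii) of that lemma yields $\nabla^{M_h}_{\partial/\partial w^j}(\partial/\partial\alpha)=(h'(\alpha)/h(\alpha))\,\partial/\partial w^j$, while part~(iv) gives $\nabla^{M_h}_{\partial/\partial\alpha}(\partial/\partial\alpha)=0$, whence $\nabla^{\varphi_\alpha}_{\partial/\partial\rho}(\partial/\partial\alpha)=\dot\alpha\,\nabla^{M_h}_{\partial/\partial\alpha}(\partial/\partial\alpha)=0$.

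Assembling these facts gives $d\tau(\partial/\partial w^j)=\tau_\alpha\,(h'(\alpha)/h(\alpha))\,\partial/\partial w^j$ and $d\tau(\partial/\partial\rho)=\dot\tau_\alpha\,\partial/\partial\alpha$, which, rewritten in terms of the dual coframe $\{dw^1,\dots,dw^{m-1},d\rho\}$, is precisely \eqref{d-tau-inlocalcharts}. There is no genuine obstacle here: the argument is short and the only point requiring attention is the bookkeeping, namely keeping the domain and target warping functions ($f$ versus $h$) and their Christoffel symbols strictly separate, and carrying along the factor $\dot\alpha$ produced by $d\varphi_\alpha(\partial/\partial\rho)$. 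That factor is harmless in the present lemma because it multiplies a vanishing Christoffel symbol, but the same mechanism will reappear — and contribute nontrivially — in the higher covariant derivatives computed in the subsequent lemmata.
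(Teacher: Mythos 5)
Your proposal is correct and follows essentially the same route as the paper: evaluate $d\tau$ on the coordinate frame via the pull-back connection, use $d\varphi_\alpha(\partial/\partial w^j)=\partial/\partial w^j$ and $d\varphi_\alpha(\partial/\partial\rho)=\dot\alpha\,\partial/\partial\alpha$, and read off the target Christoffel symbols from Lemma~\ref{lemma-christoffels} (with $f$ replaced by $h$), the only surviving ones being $\Gamma^j_{im}=h'(\alpha)/h(\alpha)\,\delta^j_i$. Your remark that the $\dot\alpha$ factor is harmless here only because it multiplies a vanishing Christoffel symbol matches the structure of the paper's computation exactly.
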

\begin{proof}
The expression \eqref{d-tau-inlocalcharts} for $d\tau$ is an immediate consequence of the following calculations of covariant derivatives for which we use the expression of the Christoffel symbols of $M_h$ as it is given in Lemma\link\ref{lemma-christoffels}:
\begin{align}
\nonumber
\nabla_{\partial \slash \partial w^i}^{\varphi_\alpha} \tau &= \nabla_{\partial \slash \partial w^i}^{\varphi_\alpha} \tau_\alpha \, \frac{\partial}{\partial \alpha} = \tau_\alpha\,\nabla_{\partial \slash \partial w^i}^{M_h} \, \frac{\partial}{\partial \alpha}\\ \nonumber
&= \tau_\alpha\,\left [\sum_{j=1}^{m-1}\,\Gamma_{im}^j\,
\frac{\partial}{\partial w^j}+ \Gamma_{im}^m\,
\frac{\partial}{\partial \alpha}\right ]  \\ \nonumber
&= \tau_\alpha\,\left [\sum_{j=1}^{m-1}\,\frac{h'(\alpha)}{h(\alpha)}\delta_{i}^j\,
\frac{\partial}{\partial w^j}+ 0\,
\frac{\partial}{\partial \alpha}\right ]  \\ 
&= \tau_\alpha\,\frac{h'(\alpha)}{h(\alpha)}\,
\frac{\partial}{\partial w^i}  
\nonumber
\end{align}
and
\begin{align}
\nonumber
\nabla_{\partial \slash \partial \rho}^{\varphi_\alpha} \tau &= \nabla_{\partial \slash \partial \rho}^{\varphi_\alpha} \tau_\alpha \, \frac{\partial}{\partial \alpha} = \dot{\tau}_\alpha\,\frac{\partial}{\partial \alpha}+\dot{\alpha}\,\nabla_{\partial \slash \partial \alpha}^{M_h} \, \frac{\partial}{\partial \alpha}\\ 
&= \dot{\tau}_\alpha\,\frac{\partial}{\partial \alpha}\,.
\nonumber
\end{align}
\end{proof}
It follows easily from Lemma~\ref{lemma-dtau-explicit} that
\begin{align}
\nonumber
|d\tau|^2&=\sum_{i,j=1}^{m-1}\frac{(g_{\s})^{ij}}{f^2}\langle d\tau \left( \frac{\partial}{\partial w^i} \right),d\tau \left(\frac{\partial}{\partial w^j} \right) \rangle_{M_h}+\langle d\tau \left( \frac{\partial}{\partial \rho} \right),d\tau \left( \frac{\partial}{\partial \rho} \right)\rangle_{M_h} \\ 
&=(m-1) \,\frac{h'^2( \alpha)}{f^2}\, \tau_{\alpha}^2\,+\,\dot{\tau}_{\alpha}^2  
\nonumber
\end{align}
and so we can conclude by writing:
\begin{equation*}
E_3(\varphi_\alpha)= \frac{1}{2}\,{\rm Vol}(\s^{m-1})\,\int_I\, \left [\dot{\tau}_{\alpha}^2+(m-1) \,\frac{h'^2( \alpha)}{f^2}\, \tau_{\alpha}^2\right ]\, f^{m-1}\,d\rho \,.
\end{equation*}
Next, we obtain the relevant information concerning the $4$-energy. More precisely, we compute (see \cite{EL83} for calculations of this type):
\begin{lemma}\label{lemma-d*dtau-explicit}
\begin{equation}\label{d*d-tau-inlocalcharts}
-\,d^*d \tau= \left [\ddot{\tau}_{\alpha}+\,(m-1) \,\frac{\dot{f}}{f}\,\dot{\tau}_\alpha\, -\,(m-1)\,\frac{h'^2( \alpha)}{f^2}\,\tau_\alpha \right] \,\frac{\partial}{\partial \alpha}.
\end{equation}
\end{lemma}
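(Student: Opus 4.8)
The plan is to compute $-d^\ast d\,\tau$ directly from the expression for the rough Laplacian recalled at the beginning of the paper, in exactly the same spirit as the proof of Lemma~\ref{lemma-dtau-explicit} and using the Christoffel symbols of Lemma~\ref{lemma-christoffels}. Since $\tau=\tau(\varphi_\alpha)=\tau_\alpha\,\partial/\partial\alpha$ is a $0$-form, at any point $p$ we have
\[
-d^\ast d\,\tau=\sum_{i=1}^m\left(\nabla^{\varphi_\alpha}_{e_i}\nabla^{\varphi_\alpha}_{e_i}\tau-\nabla^{\varphi_\alpha}_{\nabla^{M_f}_{e_i}e_i}\tau\right),
\]
and both sides are tensorial, so it suffices to evaluate the right-hand side in a conveniently chosen frame. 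I would fix a point of $\s^{m-1}$, take a local orthonormal frame $\{\tilde e_a\}_{a=1}^{m-1}$ of $(\s^{m-1},g_{\s})$ which is geodesic at that point (so that $\nabla^{\s}_{\tilde e_a}\tilde e_a=0$ there), and work with the associated orthonormal frame $e_a=f^{-1}\tilde e_a$ $(1\le a\le m-1)$, $e_m=\partial/\partial\rho$ of $M_f$, evaluating everything at the chosen point $p$; since $p$ is arbitrary this proves \eqref{d*d-tau-inlocalcharts} on all of $I$.

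For the radial index $i=m$: by entry (iv) of Lemma~\ref{lemma-christoffels} one has $\nabla^{M_f}_{\partial/\partial\rho}\partial/\partial\rho=0$, so the second term drops, while $\nabla^{M_h}_{\partial/\partial\alpha}\partial/\partial\alpha=0$ (entry (iv) on $M_h$) gives $\nabla^{\varphi_\alpha}_{\partial/\partial\rho}\tau=\dot\tau_\alpha\,\partial/\partial\alpha$ and then $\nabla^{\varphi_\alpha}_{\partial/\partial\rho}\nabla^{\varphi_\alpha}_{\partial/\partial\rho}\tau=\ddot\tau_\alpha\,\partial/\partial\alpha$, which is the first term of \eqref{d*d-tau-inlocalcharts}. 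For the spherical indices I would start from the computation already made in the proof of Lemma~\ref{lemma-dtau-explicit}, namely $\nabla^{\varphi_\alpha}_{\partial/\partial w^i}\tau=\tau_\alpha\,\frac{h'(\alpha)}{h(\alpha)}\,\partial/\partial w^i$, whence $\nabla^{\varphi_\alpha}_{e_a}\tau=\tau_\alpha\,\frac{h'(\alpha)}{f\,h(\alpha)}\,\tilde e_a$; differentiating once more along $e_a$, the scalar coefficient $\tau_\alpha h'(\alpha)/(f\,h(\alpha))$ depends only on $\rho$, so only $\nabla^{\varphi_\alpha}_{e_a}\tilde e_a$ contributes, and entries (i)--(ii) of Lemma~\ref{lemma-christoffels} on $M_h$ (using $d\varphi_\alpha(\partial/\partial w^j)=\partial/\partial w^j$) together with $\nabla^{\s}_{\tilde e_a}\tilde e_a=0$ at $p$ give
\[
\sum_{a=1}^{m-1}\nabla^{\varphi_\alpha}_{e_a}\nabla^{\varphi_\alpha}_{e_a}\tau=-(m-1)\,\frac{h'^2(\alpha)}{f^2}\,\tau_\alpha\,\frac{\partial}{\partial\alpha}\qquad\text{at }p,
\]
the third term of \eqref{d*d-tau-inlocalcharts}. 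Likewise, entries (i)--(ii) of Lemma~\ref{lemma-christoffels} on $M_f$ and $\nabla^{\s}_{\tilde e_a}\tilde e_a=0$ at $p$ yield $\sum_{a}\nabla^{M_f}_{e_a}e_a=-(m-1)\,\frac{\dot f}{f}\,\partial/\partial\rho$ at $p$, hence $\sum_{a}\nabla^{\varphi_\alpha}_{\nabla^{M_f}_{e_a}e_a}\tau=-(m-1)\,\frac{\dot f}{f}\,\dot\tau_\alpha\,\partial/\partial\alpha$, and subtracting this contribution produces the middle term of \eqref{d*d-tau-inlocalcharts}.

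Adding the four contributions gives precisely the right-hand side of \eqref{d*d-tau-inlocalcharts}. There is no genuine obstacle here: the whole argument is the kind of bookkeeping with Christoffel symbols already carried out in Lemma~\ref{lemma-dtau-explicit}, and the only point deserving a little care is the treatment of the tangential ($\s$-direction) pieces — both in $\nabla^{\varphi_\alpha}_{e_a}\nabla^{\varphi_\alpha}_{e_a}\tau$ and in $\nabla^{\varphi_\alpha}_{\nabla^{M_f}_{e_a}e_a}\tau$ terms proportional to $\sum_{a}\nabla^{\s}_{\tilde e_a}\tilde e_a$ arise, and choosing $\{\tilde e_a\}$ geodesic at $p$ makes them vanish individually (without this choice they still cancel against each other, at the cost of a messier computation). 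As a consistency check I note that the resulting second order operator acting on $\tau_\alpha$ has the same shape as the expression \eqref{tension-field-fi-alfa-funzione} for $\tau_\alpha$ itself, except that the zeroth-order coefficient is $h'^2(\alpha)$ rather than $h(\alpha)h'(\alpha)$ — as it should be, since here $\tau$ points along $\partial/\partial\alpha$ and the warped geometry of $M_h$ enters only through the factor $h'(\alpha)/h(\alpha)$ of entry (iii) of Lemma~\ref{lemma-christoffels}.
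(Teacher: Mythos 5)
Your computation is correct and is essentially the paper's own argument: the paper also obtains \eqref{d*d-tau-inlocalcharts} by a direct Christoffel-symbol computation of the trace of $\nabla d\tau$ using Lemma~\ref{lemma-christoffels} and the entries of $d\tau$ from Lemma~\ref{lemma-dtau-explicit}, only organized in coordinate frames (tracing $(\nabla d\tau)^\gamma_{ij}$ with $g_{M_f}^{ij}$) rather than in your adapted orthonormal frame $\{f^{-1}\tilde e_a,\partial/\partial\rho\}$. The two presentations are interchangeable and all your individual contributions (radial, spherical, and connection terms) check out.
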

\begin{proof}
We compute in local coordinates:
\begin{equation}\label{seconda-forma-coordinate-per-dtau}
\left ( \nabla d \tau \right )^\gamma_{ij} =\left ((d\tau)_j^\gamma \right ) _{i} - {}^{M_f} \Gamma^k_{ij} (d\tau)_k^\gamma + {}^{M_h} \Gamma^\gamma_{\beta \delta}  (d\tau)_i^\beta \, \varphi^\delta_{j}
\end{equation}
and we have to calculate
$$
\left [-\,d^*d \tau \right ]^\gamma=\left ( g_{M_f}\right )^{ij}\, \left ( \nabla d \tau \right )^\gamma_{ij} \,\,.
$$
From Lemma\link\ref{lemma-dtau-explicit} we know that the only nonzero entries of $d\tau$ are
\begin{equation}\label{nonzero-entries-dtau}
(d\tau)_i^i = \tau_\alpha \, \frac{h'(\alpha)}{h(\alpha)} \quad (1 \leq i \leq m-1) \quad {\rm and} \quad (d\tau)_m^m = \dot{\tau}_\alpha \,\,.
\end{equation}
Using \eqref{nonzero-entries-dtau} and the expression of the Christoffel symbols given in Lemma\link\ref{lemma-christoffels} into \eqref{seconda-forma-coordinate-per-dtau} we compute and find
\begin{align}\label{ultimo-step-d*dtau}\nonumber
\left [-\,d^*d \tau \right ]^\gamma&= 0 \quad {\rm if} \quad 1 \leq \gamma \leq m-1 \,\, ; \\ \nonumber
\left [-\,d^*d \tau \right ]^m& =\left ( g_{M_f}\right )^{ij}\,\left [\dot{ (d\tau)_m^m}-{}^{M_f} \Gamma^m_{ij} (d\tau)_m^m  +{}^{M_h} \Gamma^m_{\beta j}  (d\tau)_i^\beta \right ] \\ \nonumber
& =\ddot{ \tau}_\alpha+ (m-1)\,\frac{\dot{f}}{f}\,\dot{ \tau}_\alpha -(m-1)\,\frac{h'^2( \alpha)}{f^2}\,\tau_\alpha  \nonumber
\end{align}
from which \eqref{d*d-tau-inlocalcharts} follows immediately.
\end{proof}
It follows readily from Lemma~\ref{lemma-d*dtau-explicit} that
\begin{equation}\label{4-energy-rot-symm-maps}
E_4(\varphi_\alpha)= \frac{1}{2}\,{\rm Vol}(\s^{m-1})\,\int_I\,\left [\ddot{\tau}_{\alpha}+\,(m-1) \,\frac{\dot{f}}{f}\,\dot{\tau}_\alpha\, -\,(m-1)\,\frac{h'^2( \alpha)}{f^2}\,\tau_\alpha \right]^2 \, f^{m-1}\,d\rho \,.
\end{equation}
Now we are in the right position to prove a result which summarises the present discussion:
\begin{theorem}\label{theorem-r-energy}
Set $V=f^{m-1}$ and denote
\begin{align}\label{assumptions-recursivity}
\mathcal{T}_2&= \tau_{\alpha}  \nonumber\\
\mathcal{T}_{2k}&= \ddot{\mathcal{T}}_{2(k-1)}+\,(m-1) \,\frac{\dot{f}}{f}\,\dot{\mathcal{T}}_{2(k-1)}\, -\,(m-1)\,\frac{h'^2( \alpha)}{f^2}\,\mathcal{T}_{2(k-1)}  \quad ( k \geq 2)\\ \nonumber
\mathcal{T}_{2k+1}&=\left [\dot{\mathcal{T}}_{2k}^2+(m-1) \,\frac{h'^2( \alpha)}{f^2}\, \mathcal{T}_{2k}^2\right ]^{1/2} \quad (k \geq 1)\,\,,
\end{align}
where $\tau_{\alpha}(\rho)$ is the function introduced in \eqref{tension-field-fi-alfa-funzione} and again $\cdot$ indicates the derivative with respect to $\rho$. Then the $r$-energy of a rotationally symmetric map $\varphi_{\alpha}:M_f \to M_h$ as in \eqref{rotationallysymmetricmaps-models} is
\begin{equation}\label{r-energy-rot-symm-maps-Lagrangian}
E_r(\varphi_\alpha)={\rm Vol}(\s^{m-1})\,\int_I\,L_r\left(\rho, \alpha(\rho), \dot{\alpha}(\rho),\ldots, \alpha^{(r)}(\rho)\right) \, d\rho \quad (r \geq 2) \,,
\end{equation}
where the explicit expression for the Lagrangians $L_r$ is:
\begin{align}\label{r-energy-explicit-rot-maps}
L_r &=  \frac{1}{2}\,\,\mathcal{T}_{r}^2\, \,V\,  \,\, \quad \quad (r \geq 2) \,.
\end{align}
Moreover, $\varphi_\alpha$ is an $r$-harmonic map if and only if the function $\alpha$ satisfies the Euler-Lagrange equation
\begin{equation}\label{Euler-lagrange-generale}
\sum_{i=1}^r \, (-1)^i \, \frac{d^i}{d\rho^i}\,\left( \frac{\partial L_r}{\partial \alpha^{(i)}}\right)+\frac{\partial L_r}{\partial \alpha}=0 \,\,.
\end{equation}
\end{theorem}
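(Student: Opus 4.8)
The plan is to reduce $E_r(\varphi_\alpha)$ to a one-dimensional variational integral in the function $\alpha$, and then to obtain the $r$-harmonicity equation by combining this reduction with the principle of symmetric criticality (Proposition~\ref{Principle-symm-crit-BMOR}, together with the remark following it, which records that it holds for $E_r$ as well). The computational heart of the matter is the inductive claim that
\[
\overline{\Delta}^{\,k-1}\tau(\varphi_\alpha)=(-1)^{k-1}\,\mathcal{T}_{2k}\,\frac{\partial}{\partial\alpha}\qquad(k\ge 1),
\]
where $\mathcal{T}_{2k}$ is defined by the recursion in \eqref{assumptions-recursivity}.

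The observation that makes the induction work is that the covariant-derivative computations of Lemma~\ref{lemma-dtau-explicit} and Lemma~\ref{lemma-d*dtau-explicit} never use the specific expression \eqref{tension-field-fi-alfa-funzione} of $\tau_\alpha$: they use only that the section is of the form $\psi(\rho)\,\partial/\partial\alpha$, with $\psi$ a smooth function of $\rho$. Hence, verbatim, every such section satisfies $-d^*d(\psi\,\partial/\partial\alpha)=\bigl[\ddot\psi+(m-1)\tfrac{\dot f}{f}\dot\psi-(m-1)\tfrac{h'^2(\alpha)}{f^2}\psi\bigr]\,\partial/\partial\alpha$ and $|d(\psi\,\partial/\partial\alpha)|^2=\dot\psi^2+(m-1)\tfrac{h'^2(\alpha)}{f^2}\psi^2$. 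The base case $k=1$ of the induction is \eqref{tension-field-fi-alfa}, since $\mathcal{T}_2=\tau_\alpha$; the inductive step follows by applying $\overline{\Delta}=d^*d$ to $(-1)^{k-2}\mathcal{T}_{2(k-1)}\,\partial/\partial\alpha$, the resulting bracket being exactly $\mathcal{T}_{2k}$ by \eqref{assumptions-recursivity} and an extra factor $-1$ being produced. Then, for $r=2s$ one uses \eqref{2s-energia} and $|\partial/\partial\alpha|_{M_h}=1$ to get $|\overline{\Delta}^{\,s-1}\tau(\varphi_\alpha)|^2=\mathcal{T}_{2s}^2=\mathcal{T}_r^2$, while for $r=2s+1$ one uses \eqref{2s+1-energia} together with the identity for $|d(\psi\,\partial/\partial\alpha)|^2$ with $\psi=\mathcal{T}_{2s}$ to get $|d\,\overline{\Delta}^{\,s-1}\tau(\varphi_\alpha)|^2=\dot{\mathcal{T}}_{2s}^2+(m-1)\tfrac{h'^2(\alpha)}{f^2}\mathcal{T}_{2s}^2=\mathcal{T}_{2s+1}^2=\mathcal{T}_r^2$. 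Since $\det g_M=f^{2(m-1)}\det g_{\s^{m-1}}$, the Riemannian volume element of $M_f$ factors, in the coordinates $(w,\rho)$, as $V\,d\rho$ times the volume element of $\s^{m-1}$, with $V=f^{m-1}$; as the integrand depends on $\rho$ only, integration over $\s^{m-1}$ produces the constant ${\rm Vol}(\s^{m-1})$ and gives \eqref{r-energy-rot-symm-maps-Lagrangian} with $L_r=\tfrac12\mathcal{T}_r^2\,V$, i.e. \eqref{r-energy-explicit-rot-maps}. Note that, although $\mathcal{T}_{2s+1}$ is defined via a square root, the Lagrangian $L_r$ depends only on $\mathcal{T}_r^2$ and is therefore a smooth function of $\rho,\alpha,\dot\alpha,\dots,\alpha^{(r)}$ (using $f>0$ on $I$ and the smoothness of $f,h$).

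Finally, the group $G={\rm SO}(m)$ acts by isometries on $M_f$ and on $M_h$ (rotating the $\s^{m-1}$ factor, fixing the interval coordinate) and $\varphi_\alpha$ is $G$-equivariant, so by Proposition~\ref{Principle-symm-crit-BMOR} the map $\varphi_\alpha$ is a critical point of $E_r$ if and only if it is stationary for $G$-equivariant variations. For $m\ge3$ every such variation is of the form $(w,\rho)\mapsto(w,\alpha_t(\rho))$ with $\alpha_0=\alpha$, that is, a variation through maps of the type \eqref{rotationallysymmetricmaps-models}, so by the reduction above stationarity is equivalent to $\alpha$ being a critical point of $\alpha\mapsto\int_I L_r\,d\rho$, which, by the classical higher-order calculus of variations, is exactly equation \eqref{Euler-lagrange-generale}. (For $m=2$ one must in addition allow the equivariant variations given by constant rotations of the target circle; but $E_r$ is invariant under such isometries by Proposition~\ref{Principle-symm-crit-BMOR}, so the conclusion is unchanged, as in the proof of Theorem~\ref{Corollary-parallel-spheres-ES}.) The only genuinely delicate point is the first one: verifying that $\overline{\Delta}$ preserves the class of sections $\psi(\rho)\,\partial/\partial\alpha$, so that Lemmata~\ref{lemma-dtau-explicit} and \ref{lemma-d*dtau-explicit} propagate along the iteration. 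After that, all that remains is bookkeeping and an appeal to the already-established principle of symmetric criticality.
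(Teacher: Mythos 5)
Your proposal is correct and follows essentially the same route as the paper: iterate Lemmata~\ref{lemma-dtau-explicit} and \ref{lemma-d*dtau-explicit} on sections of the form $\psi(\rho)\,\partial/\partial\alpha$ to obtain the recursion \eqref{assumptions-recursivity} and the reduced Lagrangian, then invoke Proposition~\ref{Principle-symm-crit-BMOR} with $G={\rm SO}(m)$ and one-dimensional calculus of variations. Your write-up is in fact more explicit than the paper's about the inductive claim $\overline{\Delta}^{\,k-1}\tau(\varphi_\alpha)=(-1)^{k-1}\mathcal{T}_{2k}\,\partial/\partial\alpha$ and the sign bookkeeping, which the paper leaves implicit.
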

\begin{proof}
First, we observe that $-\,d^*d \tau$ is of the form \eqref{tension-field-fi-alfa}, i.e., a smooth function depending on $\rho$ times $\partial / \partial \alpha$. Therefore, the computations performed in Lemmata~\ref{lemma-dtau-explicit} and \ref{lemma-d*dtau-explicit} can be iterated and we can obtain a recursive expression for the $r$-energy of $\varphi_\alpha$ for all $r \geq2$. That leads us to the definitions \eqref{assumptions-recursivity} and to the conclusion in \eqref{r-energy-rot-symm-maps-Lagrangian}--\eqref{r-energy-explicit-rot-maps}.
Now, since $G=SO(m)$ acts naturally by isometries on both $M_f$ and $M_h$, and the $G$-equivariant maps between $M_f$ and $M_h$ are of the type \eqref{rotationallysymmetricmaps-models} (except in the case $m=2$, where the family of $SO(2)$-equivariant maps also includes irrelevant isometries of $\s^1$),
we can apply the principle of symmetric criticality as in Proposition\link\ref{Principle-symm-crit-BMOR}.
It follows that  $\varphi_\alpha$ is $r$-harmonic if and only if it is a critical point with respect to equivariant variations, i.e., $\varphi_\alpha$ is $r$-harmonic if and only if $\alpha$ is a critical point of the reduced $r$-energy functional
\[
E_{r,{\rm red}}:C^{\infty}(I)\to \R,\quad E_{r,{\rm red}}(\alpha)=E_r(\varphi_\alpha),
\]
where $E_r(\varphi_\alpha)$ is defined in \eqref{r-energy-rot-symm-maps-Lagrangian}. Now, by general principles in the theory of $1$-dimensional calculus of variations, the function $\alpha$ must satisfy the Euler-Lagrange equation associated to $L_r$, i.e., \eqref{Euler-lagrange-generale}, which is an ordinary differential equation of order $2r$ (see also \cite{MOR4}).
\end{proof}
\vspace{2mm}
Now we begin the study of the Eells-Sampson functional $E^{ES}_r(\varphi)$ in this context of rotationally symmetric maps. Our first result is:
\begin{proposition}\label{prop-d2taunotzero} Let $\varphi_\alpha:M_f \to M_h$ be a rotationally symmetric map as in \eqref{rotationallysymmetricmaps-models}. Then
\begin{equation*}
E^{ES}_4(\varphi_\alpha)= \frac{1}{2}\,{\rm Vol}(\s^{m-1})\,\int_I\,\left [(m-1)\,\dot{\alpha}^2 \,\tau^2_\alpha\, \,\frac{h''^2( \alpha)}{f^2} \right] \, f^{m-1}\,d\rho +E_4(\varphi_\alpha)\,,
\end{equation*}
where $\tau_\alpha$ is defined in \eqref{tension-field-fi-alfa-funzione} and the explicit expression of $E_4(\varphi_\alpha)$ is given in \eqref{4-energy-rot-symm-maps}.
\end{proposition}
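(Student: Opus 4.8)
The plan is to start from the splitting already recorded in \eqref{ES-4-energia}, which, applied to $\varphi_\alpha$, gives
\[
E^{ES}_4(\varphi_\alpha)=\tfrac12\int_{M_f}|d^2\tau(\varphi_\alpha)|^2\dv+E_4(\varphi_\alpha),
\]
with $E_4(\varphi_\alpha)$ the explicit quantity \eqref{4-energy-rot-symm-maps}. Hence everything reduces to computing $\tfrac12\int_{M_f}|d^2\tau(\varphi_\alpha)|^2\dv$ and recognising it as the first integral in the statement.

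First I would feed the $0$-form $\tau(\varphi_\alpha)=\tau_\alpha(\rho)\,\partial/\partial\alpha$ from \eqref{tension-field-fi-alfa} into \eqref{d2-formula}, obtaining $d^2\tau(\varphi_\alpha)(X,Y)=R^{M_h}\bigl(d\varphi_\alpha(X),d\varphi_\alpha(Y)\bigr)\tau(\varphi_\alpha)$ for $X,Y$ tangent to $M_f$. Recalling that $d\varphi_\alpha(\partial/\partial w^j)=\partial/\partial w^j$ and $d\varphi_\alpha(\partial/\partial\rho)=\dot\alpha\,\partial/\partial\alpha$, only two curvature expressions of the target warped product survive: $R^{M_h}(\partial/\partial w^i,\partial/\partial w^j)(\partial/\partial\alpha)$ and $R^{M_h}(\partial/\partial w^i,\partial/\partial\alpha)(\partial/\partial\alpha)$. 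Both can be computed directly from the Christoffel symbols of $M_h$ — which are exactly those in Lemma~\ref{lemma-christoffels} with $h,\alpha$ replacing $f,\rho$ — or from the standard warped-product curvature formulas with $\partial/\partial\alpha$ as base direction. The outcome is that $R^{M_h}(\partial/\partial w^i,\partial/\partial w^j)(\partial/\partial\alpha)=0$ (the $\partial/\partial w$'s are fibre directions and $\partial/\partial\alpha$ the base direction), whereas $R^{M_h}(\partial/\partial w^i,\partial/\partial\alpha)(\partial/\partial\alpha)=-\,\tfrac{h''(\alpha)}{h(\alpha)}\,\partial/\partial w^i$, the sign being fixed by the curvature convention of \eqref{tensor-curvature-s^n}.

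Next I would work in the orthonormal frame $\{e_i=\tfrac1f\hat e_i\}_{i=1}^{m-1}\cup\{e_m=\partial/\partial\rho\}$ of $M_f$, where $\{\hat e_i\}$ is a local $g_{\s}$-orthonormal frame on $\s^{m-1}$. By the vanishing just noted, $d^2\tau(\varphi_\alpha)(e_i,e_j)=0$ whenever $i,j\le m-1$, while
\[
d^2\tau(\varphi_\alpha)(e_i,e_m)=R^{M_h}\Bigl(\tfrac1f\hat e_i,\,\dot\alpha\,\tfrac{\partial}{\partial\alpha}\Bigr)\Bigl(\tau_\alpha\,\tfrac{\partial}{\partial\alpha}\Bigr)=-\,\frac{\dot\alpha\,\tau_\alpha\,h''(\alpha)}{f\,h(\alpha)}\,\hat e_i .
\]
Since $|\hat e_i|^2_{M_h}=h^2(\alpha)$, each of these $m-1$ components has squared norm $\dot\alpha^2\tau_\alpha^2h''^2(\alpha)/f^2$, so $|d^2\tau(\varphi_\alpha)|^2=(m-1)\,\dot\alpha^2\tau_\alpha^2\,h''^2(\alpha)/f^2$. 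Integrating against the warped volume element $\dv=f^{m-1}\,d\rho\,dV_{\s^{m-1}}$ and carrying out the $\s^{m-1}$-integration yields $\tfrac12\,{\rm Vol}(\s^{m-1})\int_I(m-1)\,\dot\alpha^2\,\tau_\alpha^2\,\tfrac{h''^2(\alpha)}{f^2}\,f^{m-1}\,d\rho$, which added to $E_4(\varphi_\alpha)$ is precisely the asserted identity.

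The only delicate point — where I expect the risk of error to concentrate — is the curvature-and-metric bookkeeping: isolating the unique surviving curvature component of $M_h$, pinning down its sign under the convention of \eqref{d2-formula}--\eqref{tensor-curvature-s^n}, and keeping the two metrics distinct, namely the domain metric $f^2g_{\s}+d\rho^2$ (responsible for the factor $1/f$ in $d\varphi_\alpha(e_i)$, hence the $1/f^2$) and the target metric $h^2g_{\s}+d\alpha^2$ (giving $|\hat e_i|^2_{M_h}=h^2$, which cancels one of the two $h$'s in $h''/h$ and leaves $h''^2/f^2$). This is the same flavour of computation as in Lemmata~\ref{lemma-dtau-explicit} and \ref{lemma-d*dtau-explicit}; conceptually it is routine, but it is the step that actually manufactures the $h''$ appearing in the final formula. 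I would also note, in passing, that the identity is established on the interior $I$, regularity questions across $\partial I$ being outside its scope.
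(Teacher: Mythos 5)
Your proposal is correct and follows essentially the same route as the paper: reduce via the splitting \eqref{ES-4-energia} to computing $|d^2\tau(\varphi_\alpha)|^2$, evaluate $d^2\tau$ through \eqref{d2-formula} and the warped-product curvature of $M_h$ (the paper cites O'Neill's Proposition 42), find that only the mixed $(\rho,w^i)$ components survive and equal $\dot\alpha\,\tfrac{h''(\alpha)}{h(\alpha)}\,\tau_\alpha\,\partial/\partial w^j$ up to normalization, and then integrate over the fibre to produce ${\rm Vol}(\s^{m-1})$. Your sign and metric bookkeeping (the $1/f^2$ from the domain frame, the cancellation of one $h$ by $|\hat e_i|^2_{M_h}=h^2$) reproduces exactly the paper's formula \eqref{d2tau-rot-maps-explicit-without-integral}.
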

\begin{proof} According to \eqref{ES-4-energia}, we just have to show that
\begin{equation*}
\frac{1}{2} \int_{M_f}\,\left | d^2 \tau(\varphi_\alpha)\right |^2 \,dV= \frac{1}{2}\,{\rm Vol}(\s^{m-1})\,\int_I\,\left [(m-1)\,\dot{\alpha}^2 \,\tau^2_\alpha\, \,\frac{h''^2( \alpha)}{f^2} \right] \, f^{m-1}\,d\rho \,.
\end{equation*}
To this purpose, it is enough to verify that
\begin{equation}\label{d2tau-rot-maps-explicit-without-integral}
\left | d^2  \tau(\varphi_\alpha)\right |^2= (m-1)\,\dot{\alpha}^2 \,\tau^2_\alpha\, \,\frac{h''^2( \alpha)}{f^2}  \,.
\end{equation}
We use local coordinate frames and compute by means of \eqref{d2-formula}. Writing $d^2 \tau$ instead of $d^2 \tau(\varphi_\alpha)$, we use the expression for the sectional curvature tensor of a warped product (see \cite[Chapter~7, Proposition~42]{Oneill}) and find:
\begin{eqnarray}\label{formula1-d2tau-prop}
d^2 \tau\left (\frac{\partial}{\partial w^i},\frac{\partial}{\partial w^k}\right )&=&0 \,, \qquad 1 \leq i,k \leq m-1 \,; \nonumber \\
d^2 \tau\left (\frac{\partial}{\partial \rho},\frac{\partial}{\partial \rho}\right )&=&0 \,;  \\
d^2 \tau\left (\frac{\partial}{\partial \rho},\frac{\partial}{\partial w^i}\right )&=&\dot{\alpha}\,\frac{h''(\alpha)}{h(\alpha)}\,\tau_\alpha\,\frac{\partial}{\partial w^i} \,, \qquad 1 \leq i \leq m-1 \,.  \nonumber
\end{eqnarray}
Now \eqref{d2tau-rot-maps-explicit-without-integral} follows easily from
\eqref{formula1-d2tau-prop}.
\end{proof}
As an application of Proposition\link\ref{Principle-symm-crit-BMOR}, putting together the results of Proposition\link\ref{prop-d2taunotzero} and \eqref{4-energy-rot-symm-maps} we have:
\begin{proposition}\label{Propos-ES-4-energy-models}
Let
\[
L_4^{ES}= \frac{1}{2}\,\left \{ (m-1)\,\dot{\alpha}^2 \,\tau^2_\alpha\, \,\frac{h''^2( \alpha)}{f^2} +\left [\ddot{\tau}_{\alpha}+\,(m-1) \,\frac{\dot{f}}{f}\,\dot{\tau}_\alpha\, -\,(m-1)\,\frac{h'^2( \alpha)}{f^2}\,\tau_\alpha \right]^2 \right \} \, f^{m-1}\,,
\]
where $\tau_{\alpha}(\rho)$ is the function introduced in \eqref{tension-field-fi-alfa-funzione} and again $\cdot$ denotes the derivative with respect to $\rho$. Then a rotationally symmetric map $\varphi_{\alpha}:M_f \to M_h$ as in \eqref{rotationallysymmetricmaps-models} is $ES-4$-harmonic if and only if it satisfies the Euler-Lagrange equation:
\begin{equation*}
\sum_{i=1}^4 \, (-1)^i \, \frac{d^i}{d\rho^i}\,\left( \frac{\partial L^{ES}_4}{\partial \alpha^{(i)}}\right)+\frac{\partial L^{ES}_4}{\partial \alpha}=0 \,\,.
\end{equation*}
\end{proposition}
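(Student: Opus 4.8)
The argument is the one already used for Theorem~\ref{theorem-r-energy}, now carried out for the Eells--Sampson functional $E_4^{ES}$ in place of $E_r$; the two ingredients needed are both at hand. First I would assemble the reduced functional: combining Proposition~\ref{prop-d2taunotzero} with the expression \eqref{4-energy-rot-symm-maps} for $E_4(\varphi_\alpha)$ and factoring out the common constant ${\rm Vol}(\s^{m-1})$, one obtains at once
\[
E_4^{ES}(\varphi_\alpha)={\rm Vol}(\s^{m-1})\int_I L_4^{ES}\bigl(\rho,\alpha,\dot\alpha,\ddot\alpha,\alpha^{(3)},\alpha^{(4)}\bigr)\,d\rho,
\]
where $L_4^{ES}$ is exactly the Lagrangian in the statement, the factor $1/2$ in front of it being the common $1/2$ that appears both in Proposition~\ref{prop-d2taunotzero} and in \eqref{4-energy-rot-symm-maps}. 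I would record here that $L_4^{ES}$ is genuinely a fourth-order Lagrangian: by \eqref{tension-field-fi-alfa-funzione}, $\tau_\alpha$ depends on $\alpha,\dot\alpha,\ddot\alpha$, so $\dot\tau_\alpha$ involves $\alpha^{(3)}$ and $\ddot\tau_\alpha$ involves $\alpha^{(4)}$, whence the square $[\ddot\tau_\alpha+(m-1)(\dot f/f)\dot\tau_\alpha-(m-1)(h'^2/f^2)\tau_\alpha]^2$ depends on $\alpha$ up to order four, while the curvature term $(m-1)\dot\alpha^2\tau_\alpha^2\,h''^2/f^2$ depends on $\alpha$ only up to order two.

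Next I would invoke the principle of symmetric criticality. Since $G=SO(m)$ acts by isometries on both $M_f$ and $M_h$ and $\varphi_\alpha$ is $G$-equivariant, Proposition~\ref{Principle-symm-crit-BMOR} gives that $\varphi_\alpha$ is a critical point of $E_4^{ES}$ if and only if it is stationary under $G$-equivariant variations. As already noted in the proof of Theorem~\ref{theorem-r-energy}, for $m\ge 3$ the $G$-equivariant maps $M_f\to M_h$ are precisely the maps of the form \eqref{rotationallysymmetricmaps-models}, so an equivariant variation amounts to a variation $\alpha_s$ of the profile function $\alpha$; in the exceptional case $m=2$ the only additional equivariant variations are through isometries of $\s^1$, under which $E_4^{ES}$ is invariant, so they contribute nothing. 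Hence $\varphi_\alpha$ is $ES$-$4$-harmonic if and only if $\alpha$ is a critical point of the reduced one-dimensional functional $\alpha\mapsto{\rm Vol}(\s^{m-1})\int_I L_4^{ES}\,d\rho$ with respect to compactly supported variations of $\alpha$.

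Finally I would appeal to the standard Euler--Lagrange theory for one-dimensional variational problems with a higher-order Lagrangian (the same tool as in \cite{MOR4} and in Theorem~\ref{theorem-r-energy}): such an $\alpha$ is a critical point if and only if it solves
\[
\sum_{i=1}^4(-1)^i\frac{d^i}{d\rho^i}\!\left(\frac{\partial L_4^{ES}}{\partial\alpha^{(i)}}\right)+\frac{\partial L_4^{ES}}{\partial\alpha}=0,
\]
which is the asserted equation (and, once the differentiations are carried out, an ordinary differential equation of order eight, in agreement with Remark~\ref{remark-leading-terms}). Every step here is a direct citation of an already established fact, so no serious obstacle arises; the single point deserving the explicit word of caution given above is the identification of equivariant variations with variations of $\alpha$ in the exceptional dimension $m=2$, which is handled exactly as in Theorem~\ref{theorem-r-energy}.
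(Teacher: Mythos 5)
Your argument is correct and coincides with the paper's own (very terse) justification: the paper simply notes that the statement follows by "putting together" Proposition~\ref{prop-d2taunotzero} with \eqref{4-energy-rot-symm-maps} and applying the principle of symmetric criticality of Proposition~\ref{Principle-symm-crit-BMOR}, exactly the three ingredients you assemble. Your additional remarks on the order of the Lagrangian and the exceptional case $m=2$ are accurate and consistent with the treatment in Theorem~\ref{theorem-r-energy}.
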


\begin{remark}\label{rm-ABC} For equivariant maps as in \eqref{rotationallysymmetricmaps-models}, we describe $\tau^{ES}_4(\varphi_{\alpha})$ as a pair $(\tau^1,\tau^2)$, where $\tau^1(w,\rho)$ is tangent to $\mathbb{S}^{m-1}$ at $w$. Since $\tau^{ES}_4(\varphi_{\alpha})$ is an equivariant section, it follows that $\tau^1=0$ and $\tau^2(w,\rho)$ does not depend on $w$, i.e., the $ES-4$-tension field can be written as a smooth function depending on $\rho$ times $\partial / \partial \alpha$. Then, arguing as in \cite{MOR4}, we get
\begin{equation}\label{A+B+C}
\tau^{ES}_4\left(\varphi_\alpha \right )= -\frac{1}{f^{m-1}} \,\left [\sum_{i=1}^4 \, (-1)^i \, \frac{d^i}{d\rho^i}\,\left( \frac{\partial L_4^{ES}}{\partial \alpha^{(i)}}\right)+\frac{\partial L_4^{ES}}{\partial \alpha} \right ]\, \frac{\partial}{\partial \alpha}\,\,.
\end{equation}
In the case that the target is a space form equation \eqref{A+B+C} can be verified using the results of Section\link\ref{quadriharmonicity}. More precisely, since the interesting situation corresponds to the case that the target is not flat, we assume now that either $h(\alpha)=\sin \alpha$ or $h(\alpha)=\sinh \alpha$. Then we set $\varepsilon_h=1$ if $h(\alpha)=\sin \alpha$ and $\varepsilon_h=-1$ if $h(\alpha)=\sinh \alpha$. Our aim here is to compute  $\tau_4^{ES}(\varphi_\alpha)$ as in \eqref{tau-4-ES-general-target}, taking into account the simplifications obtained in Theorem\link\ref{Prop-space-forms} which apply to the case that the target is a space form. To this purpose, let $W\in C\left ( \varphi_\alpha^{-1}TM_h\right )$ be of the type
\[
W=F(\rho) \frac{\partial}{\partial \alpha}\,,
\]
where $F(\rho)$ is any smooth function on $I$. Then a routine computation shows that
\begin{equation*}
\overline{\Delta} W= \mathcal{L}_\Delta (F)\, \frac{\partial}{\partial \alpha}\,,
\end{equation*}
where the differential operator $\mathcal{L}_\Delta$ is defined by:
\begin{equation*}
\mathcal{L}_\Delta (F)=-\,\left [ \ddot{F}+(m-1)\frac{\dot{f}}{f}\dot{F}-(m-1)\frac{Fh'^2}{f^2}\right ]\,.
\end{equation*}
Now, a long but straightforward computation shows that
\begin{equation}\label{A}
\tau_4(\varphi_\alpha)=\left [ \mathcal{L}_\Delta^3 (\tau_\alpha)-\varepsilon_h(m-1)\frac{\mathcal{L}_\Delta^2 (\tau_\alpha)\,h^2}{f^2}+2 \varepsilon_h(m-1)\frac{\mathcal{L}_\Delta (\tau_\alpha)\,\tau_\alpha \, h\,h'}{f^2}\right ]\, \frac{\partial}{\partial \alpha}\,,
\end{equation}
where $\tau_\alpha$ is given in \eqref{tension-field-fi-alfa-funzione}. Next, we compute the terms coming from the contribution of $\widehat{E}_(\varphi_\alpha)$. We find:
\begin{equation*}
\Omega_0= G\,\frac{\partial}{\partial \alpha}\,,
\end{equation*}
where
\[
G=-2 (m-1)\,\, \frac{\tau_\alpha \,h^2\,\dot{\alpha}^2}{f^2} \,.
\]
Then we compute:
\begin{equation}\label{C}
\frac{1}{2}\left [\overline{\Delta}\Omega_0+\trace R^{M_h}\left (d\varphi_\alpha(\cdot), \Omega_0\right )d\varphi_\alpha(\cdot) \right ]=\frac{1}{2}\left [\mathcal{L}_\Delta(G)-\varepsilon_h (m-1)\frac{h^2\,G}{f^2} \right ]\frac{\partial}{\partial \alpha}\,.
\end{equation}
The next step is to derive the expression for $d^*\Omega_1$. After a long computation we find:
\begin{align}\label{B}
d^*\Omega_1&=- (m-1)\left[ -\,
\frac{\tau_\alpha^2\,\dot{\alpha}^2\,h\,h'}{f^2}+(m-1)\frac{\dot{f}\,h^2\,\dot{\alpha}\,\tau_\alpha^2}{f^3}+ \frac{d}{d\rho}\left(\frac{h^2\,\dot{\alpha}\,\tau_\alpha^2}{f^2}\right )\right ]\,\frac{\partial}{\partial \alpha}\nonumber\\
&=\left[ \frac{f^2 \,h'\,G^2}{4(m-1)h^3\dot{\alpha}^2}
+(m-1)\frac{\dot{f}\,G\,\tau_\alpha}{2f\dot{\alpha}}+ \frac{1}{2}\frac{d}{d\rho}\left(\frac{G\,\tau_\alpha}{\dot{\alpha}}\right )\right ]\,\frac{\partial}{\partial \alpha} \,.
\end{align}
Finally, adding up, as prescribed in \eqref{tau-4-ES-general-target}, the terms provided in \eqref{A}, \eqref{C} and \eqref{B}, we can verify, after a long computation, that the expression for $\tau_4^{ES}(\varphi_\alpha)$ so obtained coincides with the one given in \eqref{A+B+C}.
\end{remark}
\begin{remark}\label{Remark-ODE} The Euler-Lagrange equation for $4$-harmonicity is of the type
\begin{equation}\label{ODE-4-har}
\alpha^{(8)}(\rho)=F(\rho, \alpha,\dot{\alpha},\ldots,\alpha^{(7)})\,.
\end{equation}
The Euler-Lagrange equation for $ES-4$-harmonicity is of the type
\begin{equation}\label{ODE-ES-4-har}
\alpha^{(8)}(\rho)=F(\rho, \alpha,\dot{\alpha},\ldots,\alpha^{(7)})+H(\rho, \alpha,\dot{\alpha},\ldots,\alpha^{(4)})\,,
\end{equation}
where the function $F$ which provides the leading terms is the same in \eqref{ODE-4-har} and \eqref{ODE-ES-4-har}. The function $H$ in \eqref{ODE-ES-4-har} is of the form
\[
H(\rho, \alpha,\dot{\alpha},\ldots,\alpha^{(4)})=-\,\frac{(m-1)}{f^2}\,
\dot{\alpha}^2\,h''^2(\alpha)\,\alpha^{(4)}+ \overline{H}(\rho, \alpha,\dot{\alpha},\ddot{\alpha},\alpha^{(3)})\,.
\]
We note, for future application in the study of the existence of constant solutions, that the function $H$ vanishes when $\alpha(\rho)$ is a
constant function. Also, $H$ vanishes when $h''=0$. The standard existence and uniqueness theorem for ordinary differential equations applies to both \eqref{ODE-4-har}, \eqref{ODE-ES-4-har} and it guarantees the existence of local solutions. Moreover, in the case that $h'' \neq0$, choosing appropriate initial conditions we deduce that locally there are solutions which provide $ES-4$-harmonic maps which are not $4$-harmonic, and conversely as well. To illustrate this point in more detail we assume that $h'' \neq0$, so that the function $H$ in \eqref{ODE-ES-4-har} is not identically zero. Then we choose initial conditions
\begin{equation}\label{cond-iniziali-bis}
\alpha(\rho_0)=\alpha_0 , \,\dot{\alpha}(\rho_0)=\alpha_1 , \dots,\,\alpha^{(7)}(\rho_0)=\alpha_7
\end{equation}
such that $H(\rho_0, \alpha_0,\alpha_1,\ldots,\alpha_4)\neq 0$. Now, let us denote by $\overline{\alpha}$ the unique solution of \eqref{ODE-4-har} which satisfies the initial conditions \eqref{cond-iniziali-bis}, and by $\overline{\alpha}^{ES}$ the unique solution of \eqref{ODE-ES-4-har} which satisfies the initial conditions \eqref{cond-iniziali-bis}. Then $\overline{\alpha}$ gives rise to a map $\varphi_{\overline{\alpha}}$ which is $4$-harmonic, but not $ES-4$-harmonic. Similarly, $\overline{\alpha}^{ES}$ produces a map $\varphi_{\overline{\alpha}^{ES}}$ which is $ES-4$-harmonic, but not $4$-harmonic.
\end{remark}
\subsection{Solutions with $\alpha(\rho)$ equal to a constant}\label{subsect-constant-solutions}
A rather natural question is to investigate the existence of constant solutions $\alpha(\rho)=\alpha^*$. The most interesting case occurs when we study maps from the punctured Euclidean unit ball to the Euclidean sphere. More precisely, let us consider:
\begin{eqnarray}\label{rotationallysymmetricmaps-ball-to-sphere}
\varphi_{\alpha^*}:B^m\backslash  \{ O\} &\to& \s^m \subset \R^m \times \R\\\nonumber
w &\mapsto&\left ( \sin \alpha^*\,\,\frac{w}{|w|}, \cos \alpha^*\right )\,,
\end{eqnarray}
where $\alpha^* \in (0, \pi /2)$ is a constant. We observe that the maps \eqref{rotationallysymmetricmaps-ball-to-sphere} are of the type \eqref{rotationallysymmetricmaps-models} with $f(\rho)=\rho, \,h(\alpha)= \sin \alpha$ and $\alpha(\rho)=\alpha^*$. It is easy to show, using Proposition~\ref{pro:d2t0} and \eqref{formula1-d2tau-prop}, that any such map is  $ES-4$-harmonic if and only if it is $4$-harmonic.  We prove the following result:
\begin{theorem}\label{th-constant-soltz} There exists a map $\varphi_{\alpha^*}:B^m \backslash  \{ O\} \to \s^m$ of the type \eqref{rotationallysymmetricmaps-ball-to-sphere} which is both $ES-4$-harmonic and $4$-harmonic if and only if $m =8,9$.
\end{theorem}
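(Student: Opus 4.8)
The plan is to use the principle of symmetric criticality to reduce the whole question to the solvability of one algebraic equation. Since $G={\rm SO}(m)$ acts by isometries on both $B^m\setminus\{O\}$ and $\s^m$ and $\varphi_{\alpha^*}$ is $G$-equivariant, Proposition~\ref{Principle-symm-crit-BMOR} tells us that $\varphi_{\alpha^*}$ is $ES-4$-harmonic, respectively $4$-harmonic, if and only if it is stationary among $G$-equivariant variations, that is, among maps of the form \eqref{rotationallysymmetricmaps-models} with $f(\rho)=\rho$, $h(\alpha)=\sin\alpha$ and $\alpha=\alpha(\rho)$. Moreover, for a \emph{constant} $\alpha^*$ one has $\dot\alpha\equiv0$, so by \eqref{formula1-d2tau-prop} the section $d^2\tau(\varphi_{\alpha^*})$, equivalently $R^{\varphi_{\alpha^*}}(X,Y)\tau(\varphi_{\alpha^*})$, vanishes identically; hence Proposition~\ref{pro:d2t0} applies and $\varphi_{\alpha^*}$ is $ES-4$-harmonic if and only if it is $4$-harmonic. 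Finally, by \eqref{tension-field-fi-alfa-funzione} we have $\tau(\varphi_{\alpha^*})=\tau_{\alpha^*}\,\partial/\partial\alpha$ with $\tau_{\alpha^*}=-(m-1)\rho^{-2}\sin\alpha^*\cos\alpha^*\neq0$ for $\alpha^*\in(0,\pi/2)$, so every such map is automatically non-harmonic. Thus it remains only to determine the values of $m$ for which some constant $\alpha^*\in(0,\pi/2)$ makes $\varphi_{\alpha^*}$ $4$-harmonic.

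To do this I would compute $\tau_4(\varphi_{\alpha^*})$ via formula \eqref{A} of Remark~\ref{rm-ABC}, which is available here because the target $\s^m$ is a space form ($\varepsilon_h=1$). The key point is that the operator $\mathcal{L}_\Delta$ of that remark, with $f(\rho)=\rho$ and $h'^2=\cos^2\alpha^*$, maps each monomial $\rho^{-2j}$ to $\lambda_{-2j}\,\rho^{-2j-2}$, where, setting $t=\sin^2\alpha^*$, one gets the scalars $\lambda_{-2}=3(m-3)-(m-1)t$, $\lambda_{-4}=5(m-5)-(m-1)t$ and $\lambda_{-6}=7(m-7)-(m-1)t$. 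Substituting $\tau_{\alpha^*}=-(m-1)\rho^{-2}\sin\alpha^*\cos\alpha^*$ and these relations into \eqref{A} and collecting terms (using $h^2=\sin^2\alpha^*$, $hh'=\sin\alpha^*\cos\alpha^*$), after factoring out the everywhere nonzero scalar $-(m-1)\rho^{-8}\sin\alpha^*\cos\alpha^*$ and the common factor $\lambda_{-2}$ one finds
\[
\tau_4(\varphi_{\alpha^*})=-(m-1)\rho^{-8}\sin\alpha^*\cos\alpha^*\,\lambda_{-2}\,\big(\lambda_{-4}\lambda_{-6}-(m-1)t\,\lambda_{-4}-2(m-1)^2 t(1-t)\big)\,\frac{\partial}{\partial\alpha}.
\]
Hence $\varphi_{\alpha^*}$ is $4$-harmonic precisely when $\lambda_{-2}=0$ or the last factor vanishes. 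The first alternative gives $t=3(m-3)/(m-1)$, which is never admissible ($t\geq1$ for integers $m\geq4$ and $t\leq0$ for $m\leq3$); expanding the second alternative leaves the quadratic
\[
4(m-1)^2 t^2-(m-1)(19m-101)t+35(m-5)(m-7)=0 .
\]
Therefore a map as in the statement exists if and only if this quadratic has a root $t$ in the open interval $(0,1)$.

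The last step is the elementary discussion of this quadratic. Its discriminant is $(m-1)^2\big[(19m-101)^2-560(m-5)(m-7)\big]=(m-1)^2\big(-199m^2+2882m-9399\big)$, which is nonnegative exactly for the integers $m\in\{5,6,7,8,9\}$ (the roots of $199m^2-2882m+9399$ being $\approx 4.96$ and $\approx 9.52$). For these five values I would check, via Vieta's formulas, whether a root lies in $(0,1)$: when $m=5$ or $m=7$ the constant term $35(m-5)(m-7)$ is zero, so the roots are $0,-3/8$, respectively $0,4/3$, none admissible; when $m=6$ the equation is $20t^2-13t-7=0$ with roots $1$ and $-7/20$, still none in the open interval; when $m=8$ it is $28t^2-51t+15=0$, with roots $(51\pm\sqrt{921})/56$, exactly one of which lies in $(0,1)$; when $m=9$ it is $32t^2-70t+35=0$, with roots $(70\pm\sqrt{420})/64$, again exactly one in $(0,1)$. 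This yields the asserted equivalence $m\in\{8,9\}$.

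The main obstacle I expect is the bookkeeping in the middle step: substituting the monomial tension field into \eqref{A}, keeping track of the three eigenvalue factors $\lambda_{-2},\lambda_{-4},\lambda_{-6}$ and of the two nonlinear curvature cross-terms, and collecting everything into the displayed quadratic with the correct coefficients $4(m-1)^2$, $(m-1)(19m-101)$ and $35(m-5)(m-7)$. Everything else is either a direct appeal to results already proved (symmetric criticality and Proposition~\ref{pro:d2t0}) or a finite algebraic check; a secondary point that still requires a clean argument is that the spurious branch $\lambda_{-2}=0$ and the excluded endpoints $t=0,1$ do not contribute, so that the existence problem is exactly the solvability of the quadratic inside $(0,1)$.
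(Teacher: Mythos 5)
Your proposal is correct and follows essentially the same route as the paper: reduce to equivariant variations, use Proposition~\ref{pro:d2t0} together with \eqref{formula1-d2tau-prop} (with $\dot\alpha\equiv 0$) to identify $ES$-$4$-harmonicity with $4$-harmonicity, plug the constant solution into the reduced tension field of Remark~\ref{rm-ABC}, and analyze the resulting degree-three algebraic condition. Your factorization into $\lambda_{-2}$ times the quadratic $4(m-1)^2t^2-(m-1)(19m-101)t+35(m-5)(m-7)$ is exactly the paper's cubic $P_m(x)$ in the variable $t=(1-x)/2$ — the root $t=3(m-3)/(m-1)$ corresponds to the paper's root $x=(17-5m)/(m-1)$ and your discriminant reproduces $-199m^2+2882m-9399$ — so the two computations agree in every detail, including the explicit values of $\alpha^*$ for $m=8,9$.
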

\begin{proof}
Using Proposition\link\ref{Propos-ES-4-energy-models}, or Remark~\ref{rm-ABC}, and computing we find that a map $\varphi_{\alpha^*}$ as in \eqref{rotationallysymmetricmaps-ball-to-sphere} is $ES-4$-harmonic if and only if it is $4$-harmonic if and only if
\begin{eqnarray*}\label{4-harmonicity-constant-soltz}
&&(m-1) \Big[2 \left(258 m^2-2932 m+8002\right) \cos (2
   \alpha^*)+(m-1) \Big(4 (m-1) \cos (2 \alpha^*) \cos (4 \alpha^*)\\\nonumber
   &&+(25 m-131) (2
   \cos (4 \alpha^*)+1)\Big)\Big]+m \big(5 m (217 m-3653)+96539\big)-159999=0\,.
\end{eqnarray*}
Now, setting $x=\cos (2\alpha^*)$, this equation becomes
\begin{eqnarray*}\label{Polinomio-in-x}
&&P_m(x)=(8 m^3-24 m^2+24 m-8)x^3+(100 m^3-724 m^2+1148 m-524)x^2\\
&&+(512 m^3-6368 m^2+21856 m-16000)x+1060 m^3-18084 m^2+96252 m-159868=0\,.\nonumber
\end{eqnarray*}
The roots of the polynomial $P_m(x)$ are
$$
\frac{17-5 m}{m-1}\,;\quad \quad
\frac{1}{4}\left(\frac{-15 m^2+112 m-97}{(m-1)^2}\pm\frac{\sqrt{-199 m^2+2882 m-9399}}{m-1}\right)\,.
$$
Now, a straightforward analysis shows that there exists a (unique) root $x\in(-1,1)$ of $P_m(x)$ if and only if $m=8,9$. The corresponding solutions are
\[
\begin{array}{ll}
m=8 :\quad& \alpha^*=\dfrac{1}{2} \arccos\left(\dfrac{1}{28}\left(\sqrt{921}-23\right)\right)\\
&\\
m=9 :\quad&  \alpha^*= \dfrac{1}{2} \arccos\left(\dfrac{1}{16} \left(\sqrt{105}-19\right)\right)\,.
\end{array}
\]
\end{proof}
\begin{remark} In the case that $m=9$, the solution obtained in Theorem\link\ref{th-constant-soltz} belongs to the Sobolev space $W^{4,2}\left ( B^m,\s^m\right )$ and it provides an example of a \textit{weak critical point} $\varphi_{\alpha^*}:B^m \to \s^m$ for both the $ES-4$-energy and the $4$-energy. Since in this paper we focus on smooth critical points, we do not provide further details in this direction.
\end{remark}
\subsection{Conformal diffeomorphisms}\label{conformal-diffeo} Proper biharmonic conformal diffeomorphisms of $4$-dimensional Riemannian manifolds play an interesting role in the study of the bienergy functional. A basic example (see \cite{Baird}) is the inverse stereographic projection $\varphi:\R^4 \to \s^4\backslash\{{\rm South \,Pole}\}$. We proved in \cite{MOR2} that its restriction to the open unit ball $B^4$ is strictly stable with respect to compactly supported equivariant variations.
Here we investigate the existence of conformal solutions $\varphi_\alpha: M_f \to M_h$. Note that a rotationally symmetric map $\varphi_\alpha$ is conformal if and only if
$$
\dot{\alpha}(\rho)= \frac{h(\alpha(\rho))}{f(\rho)}\,.
$$
The Euler-Lagrange equations given in Theorem\link\ref{theorem-r-energy} and Proposition\link\ref{prop-d2taunotzero} are very long and difficult to deal with, but a computer aided, case by case verification allowed us to check the validity of the following non-existence result:
\begin{proposition}\label{non-existence-conformal-r=3-4} Assume that the models $M_f$ and $M_h$ are chosen among $\R^m, \H^m$ and $\s^m\backslash\{{\rm South \,Pole}\}$ ($m \geq 2$). Let $\varphi_\alpha: M_f \to M_h$ be a rotationally symmetric conformal diffeomorphism. Then $\varphi_\alpha$ is neither proper $r$-harmonic nor proper $ES-r$-harmonic, $r=3,4$.
\end{proposition}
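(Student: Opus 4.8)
The strategy is to turn the statement into a short, finite list of explicit ordinary differential identities and to certify each of them. First, recall that a rotationally symmetric map $\varphi_\alpha\colon M_f\to M_h$ of the type \eqref{rotationallysymmetricmaps-models} is conformal precisely when $\dot\alpha(\rho)=h(\alpha(\rho))/f(\rho)$, a first order ODE which can be integrated in closed form for each admissible pair with $f\in\{\rho,\sinh\rho,\sin\rho\}$ and $h\in\{\alpha,\sinh\alpha,\sin\alpha\}$. Imposing in addition that $\varphi_\alpha$ be a diffeomorphism of the full models $\R^m$, $\H^m$, $\s^m\setminus\{{\rm SP}\}$ cuts the list down to those pairs $(M_f,M_h)$ that actually admit such a global map, and for each of them one obtains an explicit one-parameter family $\alpha=\alpha_c(\rho)$, $c>0$: for instance the homotheties $\alpha=c\rho$ of $\R^m$, the scaled inverse stereographic projection $\tan(\alpha/2)=c\rho$ when $M_f=\R^m$ and $M_h=\s^m\setminus\{{\rm SP}\}$, and the conformal transformations of $\s^m\setminus\{{\rm SP}\}$ fixing both poles. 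For $m=2$ every such map is harmonic, hence trivially not proper, so the substantive content of the proposition concerns $m\geq3$.

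Next, by the principle of symmetric criticality (Proposition~\ref{Principle-symm-crit-BMOR}), the map $\varphi_\alpha$ is $r$-harmonic (respectively $ES-r$-harmonic) if and only if $\alpha$ solves the Euler-Lagrange equation \eqref{Euler-lagrange-generale} of order $2r$ associated with the reduced Lagrangian $L_r$ of Theorem~\ref{theorem-r-energy} (respectively with $L_4^{ES}$ of Proposition~\ref{Propos-ES-4-energy-models} when $r=4$). Since the two functionals coincide for $r=3$, a single verification handles that case; for $r=4$ one has, by Proposition~\ref{prop-d2taunotzero}, that $L_4^{ES}$ differs from $L_4$ only by a curvature term proportional to $h''(\alpha)^2$, so that term is present only when the target is spherical, and in all other cases the $ES-4$ check reduces to the $4$-harmonic one.

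The simplification that makes the iteration feasible is that, along a conformal solution, the tension field \eqref{tension-field-fi-alfa-funzione} collapses to
\[
\tau_\alpha=\frac{(m-2)\,h(\alpha)}{f(\rho)^2}\,\bigl(\dot f(\rho)-h'(\alpha)\bigr)\,,
\]
which shows at once that $\varphi_\alpha$ is harmonic exactly when $m=2$ or $\alpha_c$ forces $\dot f\equiv h'(\alpha_c)$ (the identity/homothety case), and which reduces the higher iterates $\mathcal{T}_{2k}$, $\mathcal{T}_{2k+1}$ of \eqref{assumptions-recursivity} to explicit elementary functions of $\rho$. For each of the finitely many families $\alpha_c$ one then substitutes into $\tau_\alpha$, builds $\mathcal{T}_3$ and $\mathcal{T}_4$ (together with the curvature term of $L_4^{ES}$ when needed), evaluates the left hand side of the relevant Euler-Lagrange equation, and checks that it is a nonzero function of $\rho$ for every $m\geq3$ and every admissible $c>0$, unless $\tau_\alpha\equiv0$; in either alternative $\varphi_\alpha$ is not proper $r$-harmonic, and not proper $ES-r$-harmonic, for $r=3,4$.

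The main obstacle is computational rather than conceptual: the Euler-Lagrange equations \eqref{Euler-lagrange-generale} are ordinary differential equations of order $6$ and $8$, and after the substitution one is left, in each case, with a long explicit expression in $\rho$ whose non-vanishing has to be certified uniformly in $m\geq3$ and $c>0$. These verifications are carried out with the help of a computer algebra system; since they involve no new idea we omit the details, recording only that in every case the outcome is negative, which proves the proposition.
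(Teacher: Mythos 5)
Your plan follows essentially the same route as the paper, which itself offers no more detail than a computer-aided, case-by-case verification of the reduced Euler--Lagrange equations from Theorem~\ref{theorem-r-energy} and Proposition~\ref{Propos-ES-4-energy-models}; your reduction of $\tau_\alpha$ along a conformal solution to $(m-2)h(\alpha)\bigl(\dot f-h'(\alpha)\bigr)/f^2$ is correct and is exactly the simplification that makes the substitution tractable. One assertion in your plan is wrong, however, and would corrupt one of the cases on your checklist: the correction term $(m-1)\dot\alpha^2\tau_\alpha^2\,h''(\alpha)^2/f^2$ distinguishing $L_4^{ES}$ from $L_4$ vanishes only when $h''\equiv 0$, i.e.\ for the flat target $h(\alpha)=\alpha$; it is nonzero not only for the spherical target $h=\sin\alpha$ but also for the hyperbolic target $h=\sinh\alpha$, since then $h''=\sinh\alpha\neq 0$. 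As written, your scheme would test $ES$-$4$-harmonicity of conformal diffeomorphisms into $\H^m$ against the $4$-harmonic equation rather than the $ES$-$4$-harmonic one; you must retain the $h''^2$ term for every non-flat target. With that correction the verification list is the right one and the argument goes through as in the paper.
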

\begin{remark}The dimension $m=2$ is special for harmonic maps since conformal diffeomorphisms of surfaces are always harmonic. In the case that $m=4$, we know that the inverse of the stereographic projection is a proper biharmonic conformal diffeomorphism. Therefore, it was reasonable to expect that some conformal diffemorphism between space forms could provide an example of a $3$-harmonic map when $m=6$, or of a $4$-harmonic ($ES-4$-harmonic) map when $m=8$, but Proposition\link\ref{non-existence-conformal-r=3-4} shows that this is not the case.
\end{remark}
Next, we show that things drastically change and existence may occur if we consider maps into a cylinder, i.e., if $h(\alpha) \equiv 1$. More precisely, using polar coordinates on $\R^m \setminus\{ O \}$, $m \geq 4$, let
\begin{align}\label{mappe-Baird-Fardoun}
\varphi_\alpha \,:\R^m\setminus \{O \}=& \s^{m-1} \times (0,+\infty)  \to \s^{m-1} \times \R \\ \nonumber
(&\underline{\gamma},\qquad \rho)\quad \quad  \mapsto \, \left (\underline{\gamma}, \,\alpha( \rho) \right )  \,,
\end{align}
where $\alpha(\rho)$ is a smooth function on $(0,+\infty)$. In the case that $\alpha(\rho)=\log \rho $ the map $\varphi_\alpha$ in \eqref{mappe-Baird-Fardoun} is a conformal diffeomorphism. Moreover, when $m=4$ it was observed in \cite{Baird} that $\varphi_\alpha$ is proper biharmonic and the study of its equivariant stability was carried out in \cite{MOR-last}. Now, our aim is to investigate maps as in \eqref{mappe-Baird-Fardoun} in the context of our higher order energy functionals. Our main result is the following:
\begin{theorem}\label{prop-Baird-ex} Let $\varphi_\alpha \,:\R^m\setminus \{O \}  \to \s^{m-1} \times \R $ be the conformal diffeomorphism defined as in \eqref{mappe-Baird-Fardoun} with $\alpha(\rho)=\log \rho $.
Then $\varphi_\alpha$ is both proper $ES-r$-harmonic and $r$-harmonic provided that
\begin{equation*}
m=2k \,\,{\rm and}\,\, r \geq k\geq2\,.
\end{equation*}
By contrast, if $m \geq3$ is odd and $r \geq2$, then $\varphi_{\alpha}$ is neither $ES-r$-harmonic nor $r$-harmonic.
\end{theorem}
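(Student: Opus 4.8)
The strategy is to exploit that the codomain $\s^{m-1}\times\R$ is a Riemannian product whose $\R$-factor is flat, so that both higher energies collapse onto a single scalar eigenvalue computation for the radial Laplacian of $\R^m$. Since $h\equiv1$ forces $h'=h''=0$, formula \eqref{tension-field-fi-alfa-funzione} gives $\tau(\varphi_\alpha)=\tau_\alpha\,\partial/\partial\alpha$ with $\tau_\alpha=\ddot\alpha+(m-1)\dot\alpha/\rho$, hence for $\alpha(\rho)=\log\rho$ we get $\tau_\alpha=(m-2)\rho^{-2}$; this is nowhere zero when $m\neq2$, so $\varphi_\alpha$ is never harmonic and any critical point we detect is automatically proper. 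Next I would observe, reading Lemma~\ref{lemma-christoffels} with $h'=0$, that $\partial/\partial\alpha$ is a parallel vector field on $M_h=\s^{m-1}\times\R$, so $\underline{L}:=\mathrm{span}\{\partial/\partial\alpha\}\subset\varphi_\alpha^{-1}TM_h$ is a $\nabla^{\varphi_\alpha}$-parallel line subbundle; moreover the curvature of a product annihilates any vector tangent to a flat factor, so $R^{\varphi_\alpha}(X,Y)$ kills every section of $\underline{L}$. Consequently, for $W=F(\rho)\,\partial/\partial\alpha$ one has $\nabla^{\varphi_\alpha}_XW=(XF)\,\partial/\partial\alpha$ and $\overline{\Delta}W=-(\ddot F+(m-1)\dot F/\rho)\,\partial/\partial\alpha$; writing $\mathcal{L}(F):=\ddot F+(m-1)\dot F/\rho$ for the radial Laplacian of $\R^m$, it follows that $\tau(\varphi_\alpha)$, all its iterated rough Laplacians $\overline{\Delta}^j\tau$, and their first covariant derivatives stay inside $\underline{L}$.

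Then I would show that the two functionals coincide here and both reduce to $\overline{\Delta}^{r-1}\tau$. Since $d^2\sigma(X,Y)=R^{\varphi_\alpha}(X,Y)\sigma=0$ for every $\sigma\in C(\underline{L})$, the iteration $(d^*+d)^2\tau=\overline{\Delta}\tau$, and more generally $(d^*+d)^{2j}\tau=\overline{\Delta}^j\tau$, $(d^*+d)^{2j+1}\tau=d\,\overline{\Delta}^j\tau$, carries no curvature defect; hence $E_r^{ES}(\varphi_\beta)=E_r(\varphi_\beta)$ for every equivariant map $\varphi_\beta$ of the type \eqref{mappe-Baird-Fardoun}, and by the principle of symmetric criticality (Proposition~\ref{Principle-symm-crit-BMOR}, with $G=\mathrm{SO}(m)$) this already forces $\varphi_\alpha$ to be $ES$-$r$-harmonic if and only if it is $r$-harmonic. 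For the same reason, every curvature term in the expression for $\tau_r$ (in $\tau_4(\varphi)$ of Theorem~\ref{Main-EL-equations-tau4ES} when $r=4$, in Maeta's formula \cite{Maeta1} in general) has one of the sections $\overline{\Delta}^j\tau$ or $\nabla^\varphi\overline{\Delta}^j\tau$ as an argument of $R^N$ and therefore vanishes, so $\tau_r(\varphi_\alpha)=\overline{\Delta}^{r-1}\tau(\varphi_\alpha)$ and $\varphi_\alpha$ is $r$-harmonic precisely when this section vanishes.

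Finally I would run the scalar computation. From $\mathcal{L}(\rho^a)=a(a+m-2)\rho^{a-2}$ and $\overline{\Delta}=-\mathcal{L}$ on $\underline{L}$, a telescoping product gives
\[
\overline{\Delta}^{r-1}\tau(\varphi_\alpha)=2^{\,r-1}(r-1)!\,\Big(\prod_{l=1}^{r}(m-2l)\Big)\,\rho^{-2r}\,\frac{\partial}{\partial\alpha}\,,
\]
which for $r=4$ reads $48\,(m-2)(m-4)(m-6)(m-8)\,\rho^{-8}\,\partial/\partial\alpha$. This vanishes exactly when $m\in\{2,4,6,\dots,2r\}$, i.e.\ $m=2k$ with $1\le k\le r$; discarding $m=2$, where the map \eqref{mappe-Baird-Fardoun} is not defined and $\varphi_\alpha$ would be harmonic, we conclude that $\varphi_\alpha$ is proper $ES$-$r$-harmonic and proper $r$-harmonic precisely for $m=2k$, $2\le k\le r$. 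If instead $m\ge3$ is odd, every factor $m-2l$ is odd, hence nonzero, so $\overline{\Delta}^{r-1}\tau(\varphi_\alpha)\neq0$ and $\varphi_\alpha$ is neither $ES$-$r$-harmonic nor $r$-harmonic, which finishes the proof. The only genuinely delicate point is the bookkeeping of the second paragraph — verifying that no curvature correction survives in $\tau_r$ or in $(d^*+d)^r$ for arbitrary $r$; the parallel flat line bundle $\underline{L}$ is exactly the device that makes this transparent, turning an a priori unwieldy order-$2r$ computation into the closed-form identity displayed above.
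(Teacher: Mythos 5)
Your proof is correct and follows essentially the same route as the paper: reduce to the radial picture via symmetric criticality, observe that all curvature contributions vanish because everything lives along $\partial/\partial\alpha$ (so $E_r^{ES}=E_r$ and $\tau_r=\overline{\Delta}^{r-1}\tau$), and then compute $\overline{\Delta}^{r-1}\tau=2^{r-1}(r-1)!\,\rho^{-2r}\prod_{l=1}^{r}(m-2l)\,\partial/\partial\alpha$. Your parallel flat line bundle $\underline{L}$ is just a more conceptual packaging of the paper's direct verification that $(d^*d)^k\tau$ has the form $F(\rho)\,\partial/\partial\alpha$ with $d^2$ of such sections vanishing.
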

\begin{proof} First, we observe that the family \eqref{mappe-Baird-Fardoun} is made of rotationally symmetric maps of the type \eqref{rotationallysymmetricmaps-models}, with $f(\rho)=\rho$ and $h(\alpha)\equiv 1$. Moreover, if $\alpha(\rho)=\log \rho $, then $\varphi_{\alpha}$ is a conformal diffeomorphism with conformality factor equal to $1/\rho$. Next, it is not difficult to check that, for any $\alpha$, $(d^*d)^k \tau(\varphi_\alpha)$ is of the form $F(\rho)\partial / \partial \alpha$, where $F(\rho)$ denotes a smooth function of $\rho$. Now, computing as in \eqref{d2tau-rot-maps-explicit-without-integral}, it is easy to deduce that
\[
d^2 \left ( F(\rho) \frac{\partial}{\partial \alpha}\right ) =0\,.
\]
Then it follows easily that, for rotationally symmetric maps $\varphi_\alpha \,:\R^m\setminus \{O \}  \to \s^{m-1} \times \R $, we have $E^{ES}_r(\varphi_\alpha)=E_r(\varphi_\alpha)$ for all $m,r \geq2$. Moreover, the principle of symmetric criticality stated in Proposition\link\ref{Principle-symm-crit-BMOR} applies and so, within this family, $r$-harmonicity and $ES-r$-harmonicity are equivalent.
Now, in this case
\begin{equation*}
\tau \left ( \varphi_\alpha\right )= \mathcal{T}_2\,\frac{\partial}{\partial \alpha}\,,
\end{equation*}
where
\[
\mathcal{T}_2(\rho)= \ddot{\alpha}(\rho) + \frac{m-1}{\rho}\, \dot{\alpha}(\rho)  \,.
\]

For convenience, to end the proof we carry out two separate steps:

\textbf{Step $1$:} The explicit form of the Euler-Lagrange equations for $r$-harmonicity ($r \geq 1$) is:
\begin{equation}\label{EL-equations-conformal-example}
\Delta^{r-1} \mathcal{T}_2=0 \,,
\end{equation}
where $\Delta $ is the Laplace operator which acts on radial functions as follows:
\begin{equation}\label{Laplacian-radial-coordinates}
\Delta g(\rho)=- \left [ \ddot{g}(\rho) + \frac{m-1}{\rho}\, \dot{g}(\rho) \right ] \,.
\end{equation}
\textit{Proof of Step $1$:} We apply the explicit expression of the $r$-tension field given by Maeta in \cite[Theorem~2.5 and Theorem~2.6]{Maeta1}. More precisely, since all the involved curvature terms of $\s^{m-1}\times\R$  vanish, it is easy to obtain
\[
\tau_r(\varphi_\alpha)= \overline{\Delta}^{r-1} \left ( \mathcal{T}_2\,\frac{\partial}{\partial \alpha} \right ) =  \left (\Delta^{r-1} \mathcal{T}_2\right )\,\frac{\partial}{\partial \alpha}  \,,
\]
where the second equality is true because here $h(\alpha)$ is a constant function.

\textbf{Step $2$:} We show that, if $\alpha(\rho)=\log \rho$, then
\begin{equation}\label{key-conformal-example}
\Delta^{r-1} \mathcal{T}_2= 2^{r-1}\,(r-1)!\,\,\frac{1}{\rho^{2r}}\,\, \prod_{k=1}^{r}\big(m-2k\big)\, \quad ( r \geq 1)\,.
\end{equation}
\textit{Proof of Step $2$:} First, we find that, if $\alpha(\rho)= \log \rho$, then
\begin{equation*}
\mathcal{T}_2(\rho)=\frac{m-2}{\rho^2}
\end{equation*}
and so \eqref{key-conformal-example} is true for $r=1$. Then the proof of this step can be completed by induction using \eqref{Laplacian-radial-coordinates}.

Finally, we observe that the conclusion of the proof of Theorem\link\ref{prop-Baird-ex} is an immediate consequence of \eqref{key-conformal-example} together with \eqref{EL-equations-conformal-example}.
\end{proof}
\begin{remark}\label{biharmonic-morphisms} We say that a map is $(p)$-harmonic if it is a critical point of
\[
E_{(p)}(\varphi)= \frac{1}{p} \int_M |d\varphi|^p \,dV \,.
\]
We refer to \cite{DF, HLin} for existence and regularity results for $(p)$-harmonic maps.
The notion of biharmonic morphism was introduced in \cite{LOu}. These maps, which are defined as those which preserve germs of biharmonic functions, were characterized as smooth maps $\varphi:(M^m,g) \to (N^n,h)$ which are horizontally weakly conformal, biharmonic, $(4)$-harmonic and satisfy the following equation:
\begin{eqnarray}\label{biharm-morph-equation}
&&|\tau(\varphi)|^4+2\left (\Delta\lambda^2\right )\,|\tau(\varphi)|^2-4 \left (\Delta \lambda^2 \right ){\rm div}\langle d\varphi,\tau(\varphi)\rangle \\\nonumber
&&+n\left (\Delta \lambda^2 \right )^2+
2\langle d\varphi,\tau(\varphi)\rangle \left (\nabla |\tau(\varphi)|^2\right) +|S|^2=0\,,
\end{eqnarray}
where $\lambda$ is the dilation, $S \in C\left (\odot^2\varphi^{-1}TN\right )$ is the symmetrization of the $g$-trace of $d\varphi \otimes \nabla^{\varphi}\tau(\varphi)$ and 	$\langle d\varphi,\tau(\varphi)\rangle(X)=\langle d\varphi(X),\tau(\varphi)\rangle$ (note that our sign convention for $\Delta$ on functions is different from the one in \cite{LOu}). Now, when $m=4$, the map $\varphi_\alpha$ of Theorem\link\ref{prop-Baird-ex} is horizontally weakly conformal, biharmonic and $(4)$-harmonic, but it does not verify \eqref{biharm-morph-equation} and so it is not a biharmonic morphism.

For rotationally symmetric maps as in \eqref{mappe-Baird-Fardoun} the equation for $r$-harmonicity is \eqref{EL-equations-conformal-example}. Therefore, if we drop the requirement that $\varphi_\alpha$ be a conformal map, by a routine analysis of this linear ODE we can determine other explicit solutions. In particular, we find that $\varphi_\alpha$ is proper $r$-harmonic (and $ES-r$-harmonic) in the following cases (note that, since the $r$-harmonicity equation is linear, linear combinations of solutions provide further solutions; also, adding a harmonic function to a proper $r$-harmonic one yields another proper $r$-harmonic function, $r \geq2$):
\begin{equation}\label{biharm-not-conformal}
\begin{array}{lll}
m\geq 2 & r \geq2 & \alpha(\rho)=\rho^2  \\
m=2k & r \geq k+1\geq 2 & \alpha(\rho)=\rho^2\log \rho \\
m\geq2 & r\geq r'\geq2 &  \alpha(\rho)=\rho^{2r'-m} \,.
\end{array}
\end{equation}
We can observe that, since the operator $\Delta$ in \eqref{EL-equations-conformal-example} is the standard Laplacian of $\R^m$ in polar coordinates, the Almansi property applies (see \cite{Almansi}, and \cite {MR1} for recent developments). In particular, a proper $r$-harmonic solution multiplied by $\rho^2$ becomes a proper $(r+1)$-harmonic example. This observation, together with the result of Theorem\link\ref{prop-Baird-ex}, leads us to conclude, as in \eqref{biharm-not-conformal}, that the function $\alpha(\rho)=\rho^2 \log \rho$ gives rise to proper $r$-harmonic maps if $m=2k$ and $r \geq k+1\geq 2$.
\end{remark}
\subsection{Condition (C)}\label{subsection-condition-C}
To our knowledge, no previous work in the literature clarifies and proves in which contexts the Condition (C) of Palais-Smale holds for the $ES-r$-energy ($r$-energy) functionals.
A general belief (see \cite{ES2, Eliasson, Lemaire}) is that, if $2r>\dim M$ and the curvature of the target is non-positive, then the $ES-r$-energy ($r$-energy) functionals may
satisfy Condition (C). But, for each of these functionals, a further difficulty in the search of proper critical points is the fact that the minimum point in a given homotopy class can very well be reached by a harmonic map. By contrast, when the target has positive curvature, there is little hope that these
higher order energy functionals satisfy Condition (C). We
illustrate this by means of the following result which displays a homotopy class where the $ES-4$-energy functional does not reach the infimum.
\begin{theorem}\label{theor-cond-C}
Let $\T^2$ denote the flat $2$-torus. Then
\begin{itemize}
\item[{\rm (i)}]
\begin{equation}\label{Inf=0}
{\rm Inf} \left \{E_4^{ES}(\varphi) \colon \varphi\in C^{\infty}\left (\T^2, \s^2\right),\,\, \varphi\,  {\rm\,has \,degree \,one} \right \}=0 \,.
\end{equation}
\item[{\rm (ii)}] The functional $E_4^{ES}(\varphi)$ does not admit a minimum in the homotopy class of maps $\varphi:\T^2 \to \s^2$ of degree one.
\end{itemize}
\end{theorem}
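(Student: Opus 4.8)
The plan is to prove (i) by an explicit bubbling construction and (ii) by combining the rigidity forced by the vanishing of $E_4^{ES}$ with the Eells--Wood obstruction for harmonic maps of surfaces. For (i), since $E_4^{ES}\ge0$ it suffices to produce, for each small $\lambda>0$, a smooth degree-one map $\varphi_\lambda\colon\T^2\to\s^2$ with $E_4^{ES}(\varphi_\lambda)\to0$. Fix a point $x_*\in\T^2$ and a radius $R$ below the injectivity radius, and let $u\colon\R^2\to\s^2$ be the inverse stereographic projection: a conformal --- hence harmonic --- diffeomorphism onto $\s^2\setminus\{p\}$ with $\operatorname{dist}(u(y),p)=O(|y|^{-1})$ and $|\partial^\beta u(y)|=O(|y|^{-1-|\beta|})$ as $|y|\to\infty$. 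Define $\varphi_\lambda$ to equal $u\big((x-x_*)/\lambda\big)$ on $\{|x-x_*|\le R/2\}$, to equal the constant $p$ on $\{|x-x_*|\ge R\}$, and to be a smooth interpolation on the annulus $A=\{R/2\le|x-x_*|\le R\}$, the interpolation being carried out in a normal coordinate ball of $\s^2$ around $p$ with a fixed cut-off function. A routine computation of $\deg\varphi_\lambda=\tfrac1{4\pi}\int_{\T^2}\varphi_\lambda^*\omega_{\s^2}$ shows $\deg\varphi_\lambda\to1$, so $\deg\varphi_\lambda=1$ for $\lambda$ small.

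The key observation is that $\varphi_\lambda$ is harmonic on $\{|x-x_*|\le R/2\}$ (a conformal reparametrisation of the harmonic map $u$) and constant on $\{|x-x_*|\ge R\}$, so $\tau(\varphi_\lambda)$ is supported in the fixed annulus $A$; moreover the asymptotics of $u$ imply that $\varphi_\lambda-p$ together with all its covariant derivatives is $O(\lambda)$ on $A$, uniformly in $\lambda$. Hence every polynomial expression in $\varphi_\lambda$ and its derivatives that vanishes on constant maps --- in particular $\overline{\Delta}\tau(\varphi_\lambda)$ and $R^{\s^2}(d\varphi_\lambda(X_i),d\varphi_\lambda(X_j))\tau(\varphi_\lambda)$ --- is $O(\lambda)$ on $A$ and zero elsewhere. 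Recalling that
\[
E_4^{ES}(\varphi_\lambda)=\tfrac12\int_{\T^2}|\overline{\Delta}\tau(\varphi_\lambda)|^2\dv+\tfrac14\int_{\T^2}|R^{\s^2}(d\varphi_\lambda(X_i),d\varphi_\lambda(X_j))\tau(\varphi_\lambda)|^2\dv ,
\]
we conclude $E_4^{ES}(\varphi_\lambda)\le C\,{\rm Vol}(A)\,\lambda^2=O(\lambda^2)\to0$, which proves (i). It is instructive to note the contrast with the Dirichlet energy: along the same sequence the $2$-energy stays $\ge4\pi$, because its integrand is conformally invariant and does not detect that $u$ is harmonic, whereas $E_4^{ES}$ only feels the tension field, which vanishes on the harmonic bubble.

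For (ii), suppose the infimum in \eqref{Inf=0} were attained by a smooth degree-one map $\varphi$, so that $E_4^{ES}(\varphi)=0$ and therefore both $\overline{\Delta}\tau(\varphi)=0$ and $R^{\s^2}(d\varphi(X_i),d\varphi(X_j))\tau(\varphi)\equiv0$. From $\overline{\Delta}=\nabla^{*}\nabla$ one has $\int_{\T^2}\langle\overline{\Delta}\tau(\varphi),\tau(\varphi)\rangle\dv=\int_{\T^2}|\nabla^{\varphi}\tau(\varphi)|^2\dv$, so $\tau(\varphi)$ is parallel and $|\tau(\varphi)|$ is constant. Since $\deg\varphi=1$, the $2$-form $\varphi^{*}\omega_{\s^2}$ is not identically zero (its integral over $\T^2$ equals $4\pi\deg\varphi\neq0$), hence there is a point $x_0$ at which $d\varphi_{x_0}$ has rank $2$; using $R^{\s^2}(X,Y)Z=\langle Y,Z\rangle X-\langle X,Z\rangle Y$ and the linear independence of $d\varphi_{x_0}(X_1),d\varphi_{x_0}(X_2)$, the curvature equation forces $\tau(\varphi)(x_0)$ to be orthogonal to $T_{\varphi(x_0)}\s^2$, hence $\tau(\varphi)(x_0)=0$; with $|\tau(\varphi)|$ constant this yields $\tau(\varphi)\equiv0$. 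Thus $\varphi$ would be a harmonic map of degree one from $\T^2$ to $\s^2$, which contradicts the theorem of Eells and Wood on restrictions of harmonic maps of surfaces (no harmonic map of degree $\pm1$ exists from the torus to $\s^2$); this proves (ii). (Recall that $[\T^2,\s^2]\cong\z$ via the degree, so the homotopy class of degree-one maps is exactly the class in which the infimum is $0$.)

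The main obstacle lies in part (i): one must arrange the gluing so that the transition annulus $A$ has fixed size while the bubble concentrates, and then verify that the eighth-order quantity $E_4^{ES}$ --- including the curvature term $\widehat E_4$ and the sub-leading term $\overline{\Delta}\tau$ --- is genuinely $O(\lambda)$ on $A$. The cleanest way to avoid any explicit eighth-order computation is to phrase matters as ``$\varphi_\lambda$ is harmonic off the fixed annulus $A$ and $C^{\infty}$-small on $A$'', after which the estimate is immediate. Part (ii) is then a short rigidity argument followed by the classical Eells--Wood obstruction and should present no real difficulty.
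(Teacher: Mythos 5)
Your argument is correct and follows the same overall strategy as the paper: part (i) by concentrating an inverse stereographic bubble glued to a constant map along a fixed transition annulus, and part (ii) by showing that $E_4^{ES}(\varphi)=0$ forces $\varphi$ to be harmonic and then invoking the non-existence of a degree-one harmonic map $\T^2\to\s^2$. The differences are in execution. For (i), the paper builds the same kind of map explicitly through the rotationally symmetric profile $\alpha_a(\rho)=2\arctan(a\rho)+\xi(\rho)\,(\pi-2\arctan(a\rho))$ and then estimates the reduced Lagrangian $L_4^{ES}$ term by term on the fixed annulus $1\le\rho\le2$; your soft formulation (harmonic off a fixed annulus, $C^4$-small on it, so every monomial in the integrand containing at least one derivative of $\varphi_\lambda$ is $O(\lambda)$, and the integrand only involves derivatives up to order four) yields the same $O(\lambda^2)$ bound with far less computation, at the price of having to check the degree of the glued map separately, which you do. For (ii), the paper uses only the vanishing of the $E_4$-part: $\overline{\Delta}\tau(\varphi)=0$ gives $\nabla^{\varphi}\tau(\varphi)=0$, and integrating the identity $\operatorname{div}Z=|\tau(\varphi)|^2+\langle d\varphi,\nabla^{\varphi}\tau(\varphi)\rangle$ over the compact torus kills $\tau(\varphi)$ at once; your route through the vanishing of the curvature term at a rank-two point of $d\varphi$ together with the constancy of $|\tau(\varphi)|$ is also valid but uses more information than is needed. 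Both arguments are sound.
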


\begin{proof} (i) Let $\xi:\R\to \R$ be a smooth function such that
\begin{enumerate}
\item $\xi(\rho)=0, \quad \forall \rho\in(-\infty, 1]$; \item $\xi(\rho)=1,
\quad \forall \rho\in[2,\infty)$; \item $\xi(\rho)\in(0,1), \quad \forall
\rho\in(1,2$);\item $\dot{\xi}(\rho)>0$ on $(1,2)$, so $\xi$ is
strictly increasing on $[1,2]$.
\end{enumerate}

Let $a>1$ and define the following function $\alpha_a:\R\to \R$:
\begin{equation}\label{def-alpha-a}
\alpha_a(\rho)=2\arctan(a\rho)+\xi(\rho)(\pi-2\arctan(a\rho))\,.
\end{equation}
We observe that $\alpha_a(\rho)$ is a smooth function and its derivative is
\begin{equation}\label{Cond-C-first-der}
\dot{\alpha_a}(\rho)=(1-\xi)\frac{2a}{1+a^2\rho^2}
+\dot{\xi}(\pi-2\arctan(a\rho))\,.
\end{equation}
Then $\dot{\alpha}_a>0$ on $(-\infty,2)$, so $\alpha_a$ is strictly
increasing on $(-\infty,2]$ and $\alpha_a([0,2])=[0,\pi]$.

We consider the $2$-dimensional flat torus $\T^2$ modelled, with the usual identifications, by
$$
\mathcal{Q}^2(3)=\{(x,y)\in\R^2 \ : \ \vert x\vert\leq 3, \quad \vert
y\vert \leq 3\},
$$
and define the map $\varphi_a:\T^2\to \s^2$ as follows:
\begin{enumerate}
\item $\varphi_a(0)=N$, where $N=(0,0,1)$ is the North pole; \item
${\varphi_a}_{\big{\vert}\T^2\backslash B^2(2)}=S$, the South pole, where
$$
B^2(R)=\{(x,y)\in\R^2 \ : \ \vert(x,y)\vert<R\}\,;
$$ \item in
the polar coordinates $(\vartheta,\rho)$ on $\R^2\backslash\{0\}$ and the
spherical coordinates $(\vartheta,\alpha)$ on $\s^2\backslash\{N,S\}$, the
map $\varphi_a$ is given by
$$
\varphi_a(\vartheta, \rho)=(\vartheta, \alpha_a(\rho)), \quad \rho\in(0,2),\vartheta\in\s^1\,.
$$
\end{enumerate}
The map $\varphi_a$ is well defined and smooth since the general regularity conditions
\[
\begin{aligned}
&\alpha_a(0)=0, \,\,\alpha_a^{(2k)}(0)=0 \,\,(k\geq1) \quad {\rm and} \,\,\,\,\alpha_a^{(2k+1)}(0)\in \R \,\,(k \geq0); \\
&\alpha_a(2)=\pi \quad {\rm and}\quad \alpha_a^{(k)}(2)=0 \,\,(k \geq1)
\end{aligned}
\]
are satisfied. We also note that all the maps $\varphi_a$ have degree $1$. Therefore, it is enough to show that
\begin{equation}\label{lim-a-infty}
\lim_{a \to +\infty} E^{ES}_4 (\varphi_a)=0 \,.
\end{equation}
Now, in order to compute $E^{ES}_4 (\varphi_a)$, we use Proposition~\ref{Propos-ES-4-energy-models} with $m=2$, $f(\rho)=\rho$ and $h(\alpha)=\sin \alpha$. We find
\begin{eqnarray}\label{E-4-cond-C}
E^{ES}_4 (\varphi_a)&=& \int_{B^2(2) \backslash B^2(1)} L_4^{ES}\left(\rho,\alpha_a(\rho),\dot{\alpha_a}(\rho),\ldots,\alpha_a^{(4)}(\rho)\right ) \,dV \nonumber
\\& =&2\pi \int_1^2  L_4^{ES}\left(\rho,\alpha_a(\rho),\dot{\alpha_a}(\rho),\ldots,\alpha_a^{(4)}(\rho)\right ) \,d\rho\,,
\end{eqnarray}
where the integral is just over $B^2(2) \backslash B^2(1)$ because $\varphi_a$ is harmonic on $B^2(1)$ and outside $B^2(2)$. The explicit expression for the Lagrangian  $L_4^{ES}\left(\rho,\alpha(\rho),\dot{\alpha}(\rho),\ldots,\alpha^{(4)}(\rho)\right )=L$ in \eqref{E-4-cond-C} is the following (we write it in an expanded form because this simplifies the remaining part of the analysis):
\begin{tiny}
\begin{eqnarray*}\label{Explicit-Lagrangian-cond-C}\nonumber
L&=&\frac{\sin ^2(\alpha ) \cos ^6(\alpha )}{2 \rho^7}-\frac{4 \sin
   ^2(\alpha ) \cos ^4(\alpha )}{\rho^7}+\frac{8 \sin ^2(\alpha )
   \cos ^2(\alpha )}{\rho^7}+\frac{2 \dot{\alpha} \sin (\alpha ) \cos
   ^5(\alpha )}{\rho^6}\\\nonumber &&-\frac{3 \dot{\alpha} \sin ^3(\alpha ) \cos
   ^3(\alpha )}{\rho^6}-\frac{7\dot{\alpha} \sin (\alpha ) \cos
   ^3(\alpha )}{\rho^6}+\frac{12 \dot{\alpha} \sin ^3(\alpha ) \cos
   (\alpha )}{\rho^6}-\frac{4 \dot{\alpha}\sin (\alpha ) \cos
   (\alpha )}{\rho^6} \\ \nonumber
   &&+\frac{\left(\dot{\alpha
   }\right)^2}{2 \rho^5}+\frac{9 \left(\dot{\alpha
   }\right)^2 \sin ^4(\alpha )}{2 \rho^5}-\frac{3 \left(\dot{\alpha
   }\right)^2 \sin ^2(\alpha )}{\rho^5}+\frac{2 \left(\dot{\alpha
   }\right)^2 \cos ^4(\alpha )}{\rho^5}+\frac{2 \left(\dot{\alpha
   }\right)^2 \cos ^2(\alpha )}{\rho^5}\\ \nonumber
   &&+\frac{4 \left(\dot{\alpha
   }\right)^2 \sin ^2(\alpha ) \cos ^4(\alpha
   )}{\rho^5}+\frac{\left(\dot{\alpha
   }\right)^2 \sin ^4(\alpha ) \cos
   ^2(\alpha )}{2 \rho^5}-\frac{22 \left(\dot{\alpha
   }\right)^2 \sin
   ^2(\alpha ) \cos ^2(\alpha )}{\rho^5}+\frac{4 \left(\dot{\alpha
   }\right)^3 \sin (\alpha ) \cos (\alpha )}{\rho^4}\\ \nonumber
   &&-\frac{2 \ddot{\alpha} \sin (\alpha ) \cos ^5(\alpha
   )}{\rho^5}+\frac{7 \ddot{\alpha} \sin (\alpha ) \cos ^3(\alpha
   )}{\rho^5}-\frac{4 \ddot{\alpha} \sin ^3(\alpha ) \cos (\alpha
   )}{\rho^5}
   +\frac{4 \ddot{\alpha} \sin (\alpha ) \cos (\alpha
   )}{\rho^5}\\&&+\frac{8 \left(\dot{\alpha}\right)^3 \sin (\alpha ) \cos
   ^3(\alpha )}{\rho^4}-\frac{13 \left(\dot{\alpha}\right)^3 \sin
   ^3(\alpha ) \cos (\alpha )}{\rho^4}+\frac{\left(\dot{\alpha}\right)^4 \sin ^2(\alpha )}{2 \rho^3}+\frac{8 \left(\dot{\alpha}\right)^4 \sin ^2(\alpha ) \cos ^2(\alpha )}{\rho^3}\\
   &&+\frac{ \ddot{\alpha} \sin ^3(\alpha ) \cos ^3(\alpha )}{\rho
   ^5}-\frac{\dot{\alpha} \ddot{\alpha}}{\rho ^4}-\frac{3 \dot{\alpha}
    \ddot{\alpha} \sin ^4(\alpha )}{\rho ^4}+\frac{4 \dot{\alpha}
    \ddot{\alpha} \sin ^2(\alpha )}{\rho ^4}-\frac{4 \dot{\alpha}
   \ddot{\alpha} \cos ^4(\alpha )}{\rho ^4}-\frac{4 \dot{\alpha}
   \ddot{\alpha} \cos ^2(\alpha )}{\rho ^4}\\&&+\frac{8 \dot{\alpha}
   \ddot{\alpha} \sin ^2(\alpha ) \cos ^2(\alpha )}{\rho
   ^4}-\frac{8 \left(\dot{\alpha}\right)^2 \ddot{\alpha} \sin (\alpha
   ) \cos ^3(\alpha )}{\rho ^3}-\frac{4 \left(\dot{\alpha}\right)^2 \ddot{\alpha} \sin (\alpha ) \cos (\alpha )}{\rho
   ^3}\\
   &&+\frac{\left(\ddot{\alpha}\right)^2}{2 \rho ^3}+\frac{\left(\ddot{\alpha}\right)^2 \sin ^4(\alpha )}{2 \rho ^3}-\frac{\left(\ddot{\alpha}\right)^2 \sin ^2(\alpha )}{\rho ^3}+\frac{2 \left(\ddot{\alpha}\right)^2 \cos ^4(\alpha )}{\rho ^3}+\frac{2 \left(\ddot{\alpha}\right)^2 \cos ^2(\alpha )}{\rho ^3}\\&&-\frac{2 \left(\ddot{\alpha}\right)^2 \sin ^2(\alpha ) \cos ^2(\alpha )}{\rho
   ^3}+\frac{3 \left(\dot{\alpha }\right)^2 \ddot{\alpha} \sin
   ^3(\alpha ) \cos (\alpha )}{\rho ^3}+\frac{\left(\dot{\alpha}\right)^3 \ddot{\alpha} \sin ^2(\alpha )}{\rho
   ^2}+\frac{\left(\dot{\alpha}\right)^2 \left(\ddot{\alpha}\right)^2
   \sin ^2(\alpha )}{2 \rho }\\
   &&+\frac{2 \alpha ^{(3)} \sin (\alpha ) \cos ^3(\alpha )}{\rho
   ^4}-\frac{8 \alpha ^{(3)} \sin (\alpha ) \cos (\alpha
   )}{\rho ^4}+\frac{2 \left(\alpha ^{(3)}\right)^2}{\rho
   }-\frac{2 \alpha ^{(3)}  \ddot{\alpha}}{\rho ^2}+\frac{2 \alpha
   ^{(3)}  \ddot{\alpha} \sin ^2(\alpha )}{\rho ^2}\\&&-\frac{4 \alpha
   ^{(3)} \ddot{\alpha} \cos ^2(\alpha )}{\rho ^2}+\frac{2 \alpha
   ^{(3)} \dot{\alpha}}{\rho ^3}-\frac{6 \alpha ^{(3)} \dot{\alpha}
   \sin ^2(\alpha )}{\rho ^3}+\frac{4 \alpha ^{(3)}\dot{\alpha}
   \cos ^2(\alpha )}{\rho ^3}+\frac{8 \alpha ^{(3)}
   \left(\dot{\alpha}\right)^2 \sin (\alpha ) \cos (\alpha )}{\rho
   ^2}\\&&
   +\frac{\alpha ^{(4)} \sin (\alpha ) \cos ^3(\alpha )}{\rho
   ^3}-\frac{4 \alpha ^{(4)} \sin (\alpha ) \cos (\alpha
   )}{\rho ^3}+\frac{1}{2} \left(\alpha ^{(4)}\right)^2 \rho
   +2 \alpha ^{(3)} \alpha ^{(4)}-\frac{\alpha ^{(4)} \ddot{\alpha}}{\rho }+\frac{\alpha ^{(4)}  \ddot{\alpha} \sin ^2(\alpha
   )}{\rho }\\&&-\frac{2 \alpha ^{(4)}  \ddot{\alpha} \cos ^2(\alpha
   )}{\rho }+\frac{\alpha ^{(4)}  \dot{\alpha}}{\rho ^2}-\frac{3
   \alpha ^{(4)} \dot{\alpha} \sin ^2(\alpha )}{\rho ^2}+\frac{2
   \alpha ^{(4)} \dot{\alpha} \cos ^2(\alpha )}{\rho ^2}+\frac{4
   \alpha ^{(4)} \left(\dot{\alpha}\right)^2 \sin (\alpha ) \cos
   (\alpha )}{\rho }\,.
\end{eqnarray*}
\end{tiny}
\noindent Direct inspection of the various terms in $L$, using the H$\ddot{{\rm o}}$lder inequality together with $1 \leq \rho \leq 2$, leads us to conclude that, in order to prove \eqref{lim-a-infty}, it suffices to show that
\begin{enumerate}\label{integrals-cond-C}
\item[{\rm (a)}]
$$ \int_1^2 \sin^2 \alpha_a(\rho)\, d\rho \,\rightarrow 0 \,\,\quad \quad {\rm as} \,\, a \rightarrow + \infty ;
$$
\item[{\rm (b)}]
$$ \int_1^2 (\alpha_a^{(i)})^2(\rho)\, d\rho \,\rightarrow 0 \,\,\quad \quad {\rm as} \,\, a \rightarrow + \infty \,\, (i=1,2,3,4) \,.
$$
\end{enumerate}
To prove (a) we use the definition \eqref{def-alpha-a} of $\alpha_a(\rho)$, the fact that $\alpha_a(\rho)$ is strictly increasing on $[1,2]$ and $\alpha_a(1)> \pi/2$. Then we deduce the following uniform estimate on $[1,2]$:
\[
|\sin \alpha_a(\rho)|=\sin \alpha_a(\rho) \leq \sin \alpha_a(1) =\frac{2a}{1+a^2}
\]
from which (a) follows immediately. As for (b), we start with the case of the first derivative $(i=1)$. Let us denote
$$
M_i={\rm Max}\left \{ |\xi^{(i)}(\rho) |\,\,: \,\,1 \leq \rho \leq2 \right \}\,, \quad \quad i=1,2,3,4\,.
$$
Inspection of \eqref{Cond-C-first-der} leads us to the following uniform estimate on $[1,2]$:
\begin{equation*}
|\dot{\alpha_a}(\rho)|\leq \frac{2a}{1+a^2}+M_1 (\pi-2 \arctan a)
\end{equation*}
and so the case (b), $i=1$, is proved. The cases $i=2,3,4$ are similar and, just to give an idea of the required analysis, we supply the details for the most difficult case, i.e., $i=4$. Indeed, a computation shows that
\begin{eqnarray}
\nonumber
\alpha_a^{(4)}(\rho)&=&-\frac{8 a \xi ^{(3)}(\rho )}{a^2 \rho ^2+1}+\frac{96 a^7 \rho
   ^3 \xi (\rho )}{\left(a^2 \rho ^2+1\right)^4}-\frac{96 a^7
   \rho ^3}{\left(a^2 \rho ^2+1\right)^4}-\frac{64 a^5 \rho ^2
   \dot{\xi }}{\left(a^2 \rho ^2+1\right)^3}\\\nonumber&&-\frac{48 a^5
   \rho  \xi (\rho )}{\left(a^2 \rho ^2+1\right)^3}+\frac{48
   a^5 \rho }{\left(a^2 \rho ^2+1\right)^3}+\frac{24 a^3 \rho
    \ddot{\xi }(\rho )}{\left(a^2 \rho ^2+1\right)^2}+\frac{16 a^3
   \dot{\xi }(\rho )}{\left(a^2 \rho ^2+1\right)^2}\\&&+ \xi ^{(4)}(\rho )(\pi-2 \arctan (a \rho )) \,.
\nonumber
\end{eqnarray}
From this it is very easy to deduce for $|\alpha_a^{(4)}(\rho)|$ the necessary uniform upper estimate on $[1,2]$, which depends on $M_i$, $i=1,\ldots,4$, and tends to $0$ as $a$ increases to $+\infty$,   and so the proof of part (i) of Theorem\link\ref{theor-cond-C} is completed.

(ii) It is well-known that there exists no harmonic map $\varphi:\T^2 \to \s^2$ of degree one. Therefore, as a consequence of \eqref{Inf=0}, it suffices to show that $E_4^{ES}(\varphi)=0$ occurs only if $\varphi$ is harmonic. Indeed, if $E^{ES}_4(\varphi)=0$, then $E_4(\varphi)=0$ and, as $M=\T^2$ is compact, we get $\nabla^{\varphi}\tau(\varphi)=0$. But then it follows from formula \eqref{divZ} for $\operatorname{div}Z$ that $\varphi$ is harmonic.
\end{proof}
\begin{remark} The conclusion \eqref{Inf=0} in Proposition\link\ref{theor-cond-C} was obtained by Lemaire (see \cite{Lemaire}) in the case of the bienergy. Our proof is an extension of his method. We point out that the same conclusion holds for $E_3(\varphi)$ and $E_4(\varphi)$ as well, and the proof in these cases is the same. Actually, it is not difficult to extend this result to the cases that $r\geq5$, but we omit the details in this direction because no new idea is involved.
\end{remark}
\section{Second variation}\label{second-variation}
In this section we turn our attention to the study of the second variation. Very little is known in this context and, for the reasons explained in Subsection\link\ref{Sub-sec-curves}, we shall focus on the case that $\dim M=1$, so that the $E_r^{ES}(\varphi)=E_r(\varphi)$ and we can use the general theory developed by Maeta and Wang (\cite{Maeta3, Wang2}). Our goal is to compute index and nullity of some significant examples. Now, we prepare the ground to state our main results. To this purpose, first we explain some basic facts about the operator $I_r(V)$ and the definition of index and nullity. More specifically, let $\varphi:M\to N$ be an $r$-harmonic map between two Riemannian manifolds $(M,g)$ and $(N,h)$. We consider a two-parameters smooth variation $\left \{ \varphi_{t,s} \right \}$ $(-\varepsilon <t,s < \varepsilon,\,\varphi_{0,0}=\varphi)$ and denote by $V,W$ their associated vector fields:
\begin{align*}
& V(x)= \left . \frac{d}{dt} \varphi_{t,0}\right |_{t=0} \quad \in T_{\varphi(x)}N \\
& W(x)= \left . \frac{d}{ds} \varphi_{0,s}\right |_{s=0} \quad \in T_{\varphi(x)}N\,.
\end{align*}
Note that $V$ and $W$ are sections of $\varphi^{-1}TN$. The {\it Hessian} of the energy functional $E_r$ at its critical point $\varphi$ is defined by
\begin{equation}\label{Hessian-definition}
H(E_r)_\varphi (V,W)= \left . \frac{\partial^2}{\partial t \partial s}  E_r (\varphi_{t,s}) \right |_{(t,s)=(0,0)}\, .
\end{equation}
The following theorem was obtained by Jiang \cite{Jiang} for $r=2$ (see also \cite{Onic}), Wang \cite{Wang2} for $r=3$ and Maeta \cite{Maeta3} for $r \geq 4$.
\begin{theorem}\label{Hessian-Theorem} Let $\varphi:M\to N$ be an $r$-harmonic map between two Riemannian manifolds $(M,g)$ and $(N,h)$. Then the Hessian of the energy functional $E_r$ at a critical point $\varphi$ is given by
\begin{equation}\label{Operator-Ir}
H(E_r)_\varphi (V,W)= \int_M \langle I_r(V),W \rangle \, dV \,\,,
\end{equation}
where $I_r \,:C\left(\varphi^{-1} \, TN\right) \to C\left(\varphi^{-1} \, TN\right)$ is a semilinear elliptic operator of order $2r$.
\end{theorem}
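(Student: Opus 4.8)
The plan is to obtain $I_r$ by linearizing the Euler--Lagrange operator $\tau_r(\varphi)$ along a second variation parameter. The starting point is the first variation formula, which for $r=4$ is recorded above in \eqref{first-variation-e4} and which in general (see \cite{Maeta1}) reads
\[
\left.\frac{d}{dt}E_r(\varphi_{t,s})\right|_{t=0}=-\int_M\langle\tau_r(\varphi_s),V_s\rangle\dv,
\]
where $\varphi_s=\varphi_{0,s}$, $V_s$ is the $t$-variation field and $\tau_r(\varphi_s)=\overline{\Delta}^{r-1}\tau(\varphi_s)+(\text{curvature terms})$. Differentiating this identity with respect to $s$ at $s=0$ and using that $\varphi$ is $r$-harmonic, so that $\tau_r(\varphi)=0$, annihilates every term in which the $s$-derivative does not fall on $\tau_r$ (the derivative of the measure, of $V_s$, and so on), and what survives is
\[
H(E_r)_\varphi(V,W)=-\int_M\langle\,\nabla^\Phi_{\partial/\partial s}\tau_r(\varphi_s)|_{s=0}\,,V\rangle\dv .
\]
Thus $I_r(W):=-\nabla^\Phi_{\partial/\partial s}\tau_r(\varphi_s)|_{s=0}$ is, by construction, the required operator; it is a well-defined section of $\varphi^{-1}TN$ because on a critical point this $s$-derivative is independent of the choice of variation extending the field $W$.

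Next I would compute $\nabla^\Phi_{\partial/\partial s}\tau_r(\varphi_s)|_{s=0}$ term by term. The basic building block is the linearization of the tension field itself, $\nabla^\Phi_{\partial/\partial s}\tau(\varphi_s)|_{s=0}=-\overline{\Delta}W-\trace R^N(d\varphi(\cdot),W)d\varphi(\cdot)$, which is (up to sign) the classical Jacobi operator and which already appeared above in the first variation of $\widehat E_4$. To differentiate the higher iterates $\overline{\Delta}^{r-1}\tau(\varphi_s)$ one commutes $\nabla^\Phi_{\partial/\partial s}$ past each factor $\overline{\Delta}$; every commutator $[\nabla^\Phi_{\partial/\partial s},\overline{\Delta}]$ produces curvature terms of the form $R^N(W,d\varphi(\cdot))(\cdot)$ contracted with covariant derivatives of $\tau(\varphi)$ of the appropriate order, together with terms coming from differentiating the curvature contributions already present in $\tau_r$ and from the variation of the frame. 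Carrying this out inductively expresses $I_r(W)$ as $\overline{\Delta}^{r}W$ plus a finite sum of terms, each linear in $W$ and its covariant derivatives up to order $2r-2$, with coefficients built from $d\varphi$, $\nabla^\varphi\tau(\varphi)$, the curvature tensors of $M$ and $N$ and their covariant derivatives. The cases $r=2,3$ treated by Jiang and Wang and the case $r\ge4$ treated by Maeta furnish the template for the inductive step.

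Since the leading part of $I_r$ is $\overline{\Delta}^{r}$, whose principal symbol at a covector $\xi$ is $|\xi|^{2r}\,\mathrm{Id}$, the operator $I_r$ is an elliptic operator of order $2r$ (linear in $W$, as its second-variation nature forces), which is the assertion of the theorem. Finally, the symmetry $H(E_r)_\varphi(V,W)=H(E_r)_\varphi(W,V)$ is automatic from \eqref{Hessian-definition} and the commutativity of $\partial^2/\partial t\partial s$, so a posteriori $I_r$ is formally self-adjoint; alternatively one verifies this directly by repeatedly integrating by parts, all boundary terms vanishing because $M$ is compact. The genuinely laborious point is the bookkeeping of the curvature terms produced by the commutators along a two-parameter variation; everything else is a routine iteration built on the Jacobi operator $J(W)$ and the known lower-order cases.
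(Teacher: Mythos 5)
The paper does not prove this theorem: it is quoted from Jiang ($r=2$), Wang ($r=3$) and Maeta ($r\geq 4$), and the authors deliberately avoid the general expression for $I_r$ in their own index computations, working instead directly from the definition \eqref{Hessian-definition} with explicit two-parameter variations. Your sketch reconstructs the standard argument of those references — linearize the Euler--Lagrange operator $\tau_r$ along the second variation parameter — and its structural points are all sound: the vanishing of $\tau_r(\varphi)$ at a critical point kills every term in which the $s$-derivative does not fall on $\tau_r$, the base case of the linearization is the Jacobi operator $\nabla^\Phi_{\partial/\partial s}\tau(\varphi_s)|_{s=0}=-\overline{\Delta}W-\trace R^N(d\varphi(\cdot),W)d\varphi(\cdot)$ (exactly as used in Section 3 for $\widehat E_4$), and the principal part $\overline{\Delta}^r$ has symbol $|\xi|^{2r}\mathrm{Id}$, giving ellipticity of order $2r$. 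You also correctly note that your construction naturally produces $\int_M\langle I_r(W),V\rangle\,dV$ and that the form \eqref{Operator-Ir} then follows from the symmetry of the Hessian, i.e. the formal self-adjointness of $I_r$. What you have is of course a proof scheme rather than a proof: the inductive commutator bookkeeping that you defer is precisely the content of Maeta's computation, and without it one cannot write $I_r$ explicitly; but for the existence, order and ellipticity claims of the statement as phrased, your outline is adequate and consistent with the cited literature.
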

The general expression for $I_r(V)$ involves iterated applications of the classical Jacobi operator and it is very long and complicated: it can be found in the work of Maeta \cite{Maeta3}. Our approach shall be based on a direct computation using general two-parameters variations $\left \{ \varphi_{t,s} \right \}$ and the definition \eqref{Hessian-definition}. Here it is important to recall from the general theory that, when $M$ is compact, the spectrum
\begin{equation*}
\lambda_1 < \lambda_2 < \ldots < \lambda_i < \ldots
\end{equation*}
of the operator $I_r(V)$ is \textit{discrete} and tends to $+\infty$ as $i$ tends to $+ \infty$. We denote by $\mathcal{V}_i$ the eigenspace associated to the eigenvalue $\lambda_i$. Then we define
\begin{equation*}
{\rm Index}(\varphi) = \sum_{\lambda_i <0} \dim(\mathcal{V}_i)\,.
\end{equation*}
The nullity of $\varphi$ is defined as
\begin{equation*}
{\rm Nullity}(\varphi) =  \dim \left \{ V \in C\left(\varphi^{-1} \, TN\right) \, : \, I_r(V)=0\right \} \,.
\end{equation*}
In the case that $r=2$, Index and Nullity of certain proper biharmonic maps have been computed (see references \cite {LO, TAMS, MOR-last}, for instance). By contrast, when $r \geq3$ very little is known about the index and the nullity of the $r$-harmonic maps which can be found in the literature. Now we are in the right position to describe the examples that we shall investigate in our context of second variation: each case  contains a short description of the $r$-harmonic maps under consideration and the corresponding result concerning their index and nullity.

\begin{example}\label{parallel-r-harmonic-circles}
Let $r \geq2$ and consider a map $\varphi_{r,k}\,: \s^1 \to \s^2\hookrightarrow  \R^3$ defined by
\begin{equation}\label{r-harmonic-examples}
 \gamma \mapsto \, \left ( \sin (\alpha^*) \cos(k\gamma),\,\sin (\alpha^* )\sin(k\gamma), \,\cos  (\alpha^* )\right ) \,, \quad 0 \leq \gamma \leq 2\pi\,,
\end{equation}
where $\alpha^*=\arcsin \left ( 1 \slash \sqrt{r} \right )$ and $k \in \n^*$ is a fixed positive integer. We know (see \cite{Maeta2, Mont-Ratto4}) that $\varphi_{r,k}$ is a proper $r$-harmonic map. Both the notions of $r$-harmonicity and that of index and nullity of an $r$-harmonic map are invariant under homothetic changes of the metric of either the domain or the codomain. Therefore, in this example, we have assumed for simplicity that the domain is the unit circle. In particular, the radius of the domain which would ensure the condition of isometric immersion for $k=1$ is $R=1/\sqrt r$, but any choice of $R$ would not affect the conclusions of our next result:
\begin{theorem}\label{Index-theorem-r-harmonic-circles} Assume that $2 \leq r \leq 4$ and let $\varphi_{r,k} : \s^1 \to \s^2$ be a proper $r$-harmonic map as in \eqref{r-harmonic-examples}. Then
\begin{equation}\label{*}
\begin{aligned}
& {\rm Nullity}(\varphi_{r,k})=3 \\
& {\rm Index}(\varphi_{r,k})=1+2(k-1) \,.
\end{aligned}
\end{equation}
\end{theorem}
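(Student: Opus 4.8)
The plan is to reduce the computation of index and nullity to a one-dimensional spectral problem. Since $\dim M = 1$ we have $E_r^{ES} = E_r$, so we may use the theory of Maeta and Wang, but rather than invoking the general expression for $I_r$ it will be cleaner to compute the Hessian directly from the definition \eqref{Hessian-definition} using an explicit two-parameters variation. First I would set up a global orthonormal frame for $\varphi_{r,k}^{-1} T\s^2$ along the circle: write $e_3 = \varphi_{r,k}$ (the position vector, normal to $\s^2$ in $\R^3$ and hence not a section of the pull-back bundle), and choose two sections $E_1, E_2$ spanning $\varphi_{r,k}^{-1}T\s^2$ — the natural choice is $E_1 = \partial/\partial\alpha$ evaluated along the curve (the ``latitudinal'' direction) and $E_2 = \partial/\partial\vartheta$ suitably normalized (the ``longitudinal'' direction, tangent to the image circle). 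A short computation with the round metric gives the connection coefficients $\nabla^{\varphi}_{\partial/\partial\gamma} E_i$ in terms of $\alpha^*$ and $k$; because $\alpha^*$ is constant these are constant-coefficient expressions, which is the key simplification. Writing a variation field $V = a(\gamma) E_1 + b(\gamma) E_2$ with $a,b$ periodic, the operator $I_r(V)$ becomes a constant-coefficient linear ODE system of order $2r$ in $(a,b)$.

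Next I would diagonalize by Fourier analysis: expand $a,b$ in the basis $\{\cos(n\gamma),\sin(n\gamma)\}_{n\ge0}$. Because the coefficients are constant, $I_r$ acts block-diagonally on each Fourier mode $n$, reducing to a $2\times 2$ (or, after a further splitting into $\cos$/$\sin$ parts, effectively scalar-plus-$2\times2$) eigenvalue problem whose entries are polynomials in $n^2$, in $k^2$, and in $\sin^2\alpha^* = 1/r$. The nullity is the total dimension of the kernel summed over $n$, and the index is the number of Fourier modes (counted with their $2$-dimensional multiplicity for $n\ge1$, $1$-dimensional for $n=0$) on which the relevant $2\times2$ symbol matrix has a negative eigenvalue. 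The claimed answers — nullity $3$ and index $1 + 2(k-1)$ — should emerge as follows: the nullity $3$ is the dimension of the space of Killing fields of $\s^2$ restricted along $\varphi_{r,k}$ (equivalently, the tangent space to the $SO(3)$-orbit of $\varphi_{r,k}$), which always lies in the kernel of $I_r$; one then checks these are the only kernel elements. The index contribution $1$ comes from the $E_1$-direction (radial instability, present already because the parallel $\s^1(1/\sqrt r)$ sits at a non-minimal latitude), and the $2(k-1)$ comes from the Fourier modes $n = 1,\dots,k-1$ in the $E_2$-direction, which destabilize because the map wraps $k$ times; modes $n\ge k$ and the mode $n=0$ in the $E_2$-direction are stable.

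Concretely, the steps in order: (1) record the round metric of $\s^2$ in $(\vartheta,\alpha)$ coordinates and the induced data for $\varphi_{r,k}$, in particular $\tau(\varphi_{r,k})$ and its iterated rough Laplacians, reusing the computations behind Theorem~\ref{Corollary-parallel-spheres}; (2) compute $\nabla^\varphi$ in the frame $\{E_1,E_2\}$ and the curvature term $R^{\s^2}(d\varphi(\partial_\gamma),\cdot)d\varphi(\partial_\gamma)$, which for the round sphere is explicit via \eqref{tensor-curvature-s^n}; (3) assemble $I_r$ as a constant-coefficient system for $r=2,3,4$ separately (three finite computations), or better, carry the parameter $r$ symbolically using the recursion $\mathcal{T}_{2k}$ of Theorem~\ref{theorem-r-energy} specialized to the circle; (4) pass to Fourier modes and extract, for each $n$, the two eigenvalues of the symbol as explicit functions $\mu_\pm(n^2,k^2,r)$; (5) count signs: verify $\mu_-(0) = 0$ with the correct multiplicity accounting for the $3$-dimensional Killing kernel, $\mu_-$ negative exactly for the $E_2$-modes $1\le n\le k-1$ and for one $E_1$-mode, and nonnegative otherwise; conclude \eqref{*}.

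The main obstacle will be step (3)–(5): the symbol polynomials are of degree $r$ in $n^2$, and proving that the sign pattern is exactly as claimed — rather than having spurious extra negative or zero eigenvalues — requires a careful factorization. I expect the decisive identity to be that the $E_2$-symbol factors with a root at $n^2 = k^2$ (reflecting that $n=k$ corresponds to reparametrizing the image circle, hence a zero mode direction that is actually absorbed into the Killing nullity) and is monotone in $n^2$ past that root, while the $E_1$-symbol is a strictly increasing function of $n^2$ that is negative only at $n=0$; the restriction $2\le r\le 4$ is presumably exactly what is needed to guarantee this monotonicity and to rule out additional sign changes, which is why the theorem is stated in that range. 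Verifying these factorizations and monotonicity statements — likely with computer algebra for the three values $r=2,3,4$ — is the technical heart of the argument.
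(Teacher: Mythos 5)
Your proposal is correct and follows essentially the same route as the paper: a direct computation of the Hessian in $(w,\alpha)$-coordinates from a general two-parameter variation, Fourier decomposition into the eigenspaces $S^{m^2}$ of the circle Laplacian, a per-mode eigenvalue sign count, with the nullity $3$ coming from the mode-$0$ rotation field plus the mode-$k$ kernel (the restricted Killing fields) and the index from the constant $\partial/\partial\alpha$ mode plus the modes $1\le m\le k-1$. The one place your predicted structure is off is the decoupling: the $\partial/\partial w$ and $\partial/\partial\alpha$ directions genuinely couple (off-diagonal entries $C_{m,k}$ arise from cross terms such as $V_1''W_2'$ in the Hessian), so the sign count must be done on the eigenvalues $\lambda_m^{\pm}$ of the coupled blocks rather than on separate ``$E_1$'' and ``$E_2$'' symbols --- this is exactly where the paper's technical lemma (showing $\lambda_m^-<0$ for $m<k$, $\lambda_k^-=0$, $\lambda_m^->0$ for $m>k$) carries the load; also, the restriction $2\le r\le 4$ is not forced by a failure of monotonicity for larger $r$ (the paper conjectures the same conclusion for all $r\ge 2$ and reports verification for $r=5,6$) but only by the size of the computations.
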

The proof of this theorem is a case by case analysis for $r=2,3,4$ and it shall be carried out in Subsection\link\ref{otherproofs}.
\begin{remark}\label{remark-inex-depend-k} Theorem\link\ref{Index-theorem-r-harmonic-circles} was known in the case that $r=2$: it was proved for $k=1$ in \cite{LO}, where the index and the nullity of $i:\s^m(1/\sqrt 2) \hookrightarrow \s^m$ was computed.
The case $r=2$, $k \neq 1$ was proved in \cite{MOR-last}. In this work we shall give a different proof which is based on a direct method which is useful to prepare the ground for the study of the cases $r \geq 3$.
\end{remark}
\textbf{Conjecture:} we conjecture that the conclusion of Theorem\link\ref{Index-theorem-r-harmonic-circles} is true for all $r\geq2$. This belief shall be substantiated and discussed in more detail in Remark\link\ref{conjecture}.
\end{example}
\begin{example}\label{parallel-rotation-surface}
In the context of rotation surfaces, we know the following existence result (see \cite{Mont-Ratto5}).
Let $S_{{\rm par}} \subset \R^3$ be the paraboloid of revolution defined by
\begin{equation*}
z=\left (x^2+y^2 \right )\,.
\end{equation*}
Let $r \geq 3$ and consider the map $\varphi_{r}\,: \s^1 \to S_{{\rm par}}\hookrightarrow  \R^3$ defined by
\begin{equation}\label{r-harmonic-examples-par}
 \gamma \mapsto \, \left ( \alpha^* \, \sin\gamma,\,\alpha^* \, \cos\gamma, \, (\alpha^* )^2\right ) \,, \quad 0 \leq \gamma \leq 2\pi\,,
\end{equation}
where
\begin{equation*}\label{condizione-parallelo-paraboloide}
\alpha^{*}= \frac{1}{2\sqrt{r-2}} .
\end{equation*}
Then $\varphi_r$ is a proper $r$-harmonic map. These maps are interesting because we know that $S_{{\rm par}}$ does not admit neither closed geodesics nor proper biharmonic curves (see \cite{Mont-Ratto6}). Here we prove the following result:
\begin{theorem}\label{Theorem-index-par}
Assume that $r=3$ or $r=4$. Let $\varphi_{r}\,: \s^1 \to S_{{\rm par}}$ be the $r$-harmonic map defined in \eqref{r-harmonic-examples-par}. Then
\begin{align}
& {\rm Nullity}(\varphi_{r})=1\nonumber \\
& {\rm Index}(\varphi_{r})=1 \,\,.\nonumber
\end{align}
\end{theorem}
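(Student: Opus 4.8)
The plan is to combine the reduction coming from the $SO(2)$-symmetry of the configuration with a direct computation of the second variation, exactly as announced in the paragraph preceding Theorem~\ref{Hessian-Theorem}. Since $\dim M=1$ we have $E_r^{ES}(\varphi_r)=E_r(\varphi_r)$, so we may work with the Hessian operator $I_r$ of Theorem~\ref{Hessian-Theorem}. First I would introduce a global orthonormal frame $\{E_1,E_2\}$ of $\varphi_r^{-1}TS_{{\rm par}}$ along the parallel circle: let $E_2$ be the unit vector tangent to the image parallel, so that $d\varphi_r(\partial/\partial\gamma)=\alpha^*E_2$ and, up to the constant $\alpha^*$, $E_2$ is the restriction along $\varphi_r$ of the Killing vector field of $S_{{\rm par}}$ generated by the rotations about the $z$-axis; and let $E_1$ be the unit vector tangent to the meridian of $S_{{\rm par}}$ through $\varphi_r(\gamma)$. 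Because $S_{{\rm par}}$ is a surface, its curvature along the parallel is governed by the single Gauss curvature $K=K(\alpha^*)$, and the connection form of $\nabla^{\varphi_r}$ in the frame $\{E_1,E_2\}$ is a constant skew-symmetric matrix with off-diagonal entry $\omega$, namely the geodesic curvature of the parallel times the speed $\alpha^*$; all these data are constant in $\gamma$ by rotational symmetry, and $\omega\neq0$ since no parallel of the paraboloid is a geodesic.

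Next, writing a variation vector field as $V=a(\gamma)E_1+b(\gamma)E_2$, the Hessian $H(E_r)_{\varphi_r}(V,V)=\int_{\s^1}\langle I_r(V),V\rangle\,dV$ becomes, after the usual integrations by parts, a constant-coefficient self-adjoint system of ordinary differential equations of order $2r$ for the pair $(a,b)$, the coefficients being polynomials in $K$, $\omega$ (and, when $r=4$, in the meridian derivative of $K$, since $S_{{\rm par}}$ is not a space form and $\nabla^{S_{{\rm par}}}R^{S_{{\rm par}}}\neq0$). Expanding $(a,b)$ in Fourier series $e^{in\gamma}$ diagonalizes $\partial/\partial\gamma$ and reduces $I_r$ to a family of Hermitian $2\times 2$ matrices $\mathcal M_r(n)$, $n\in\z$, with ${\rm spec}(I_r)=\bigcup_{n}{\rm spec}(\mathcal M_r(n))$; then ${\rm Index}(\varphi_r)$ and ${\rm Nullity}(\varphi_r)$ are the total numbers of negative, respectively zero, eigenvalues, counted with the multiplicities forced by reality (modes $n$ and $-n$ being conjugate). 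Two structural facts should be isolated here. The constant section $\alpha^*E_2$, being the restriction of an isometry of the target, lies in $\ker I_r$, so $E_2\in\ker\mathcal M_r(0)$ and ${\rm Nullity}(\varphi_r)\geq1$. The mode $n=0$ in the $E_1$-component corresponds to displacing the parallel to a neighbouring parallel, i.e. to the second derivative of the reduced one-variable $r$-energy $\alpha\mapsto E_r(\varphi_\alpha)$ of the equivariant family; just as in the parallel-spheres case (where $\varepsilon_r(\alpha)=\sin^2\alpha\cos^{2(r-1)}\alpha$ attains a maximum at $\sin^2\alpha=1/r$), the value $\alpha^*=1/(2\sqrt{r-2})$ should be a strict local maximum of this reduced energy, which forces the remaining eigenvalue of $\mathcal M_r(0)$ to be strictly negative and contributes $1$ to the index.

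For $r=3$ and $r=4$ I would then carry out the computation explicitly, using Wang's second-variation formula for $r=3$ and Maeta's for $r=4$, specialised to a $1$-dimensional domain and to the frame $\{E_1,E_2\}$. Substituting the numerical values of $K$, $\omega$ (and $\partial K/\partial s$ for $r=4$) at $\rho=\alpha^*=1/2$, respectively $\rho=\alpha^*=1/(2\sqrt2)$, one obtains the explicit matrices $\mathcal M_3(n)$ and $\mathcal M_4(n)$, whose entries are polynomials in $n$ of degree $6$, respectively $8$. It then remains to verify that $\mathcal M_r(n)$ is positive definite for every $n\neq0$, that $\mathcal M_r(0)$ has exactly the eigenvalues $0$ and a negative number, and that no further vanishing occurs. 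The large-$|n|$ regime is controlled by the positive leading coefficient coming from the top-order part $\overline{\Delta}^{\,r}$ of $I_r$, so the verification reduces to a finite check of the low modes $n=0,\pm1,\pm2,\dots$ Combining these facts yields ${\rm Index}(\varphi_r)=1$ and ${\rm Nullity}(\varphi_r)=1$.

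The hard part is the $r=4$ case: assembling Maeta's operator $I_4$ when the target curvature is non-constant, correctly tracking in the frame $\{E_1,E_2\}$ both the terms built from $\nabla^{S_{{\rm par}}}R^{S_{{\rm par}}}$ and the non-parallelism of $\{E_1,E_2\}$ encoded by $\omega$, and then proving positivity of the degree-$8$ entries of $\mathcal M_4(n)$ for all integers $n$ outside the two exceptional modes. A subtlety not to be overlooked is that the equivariant eigenvalue must be shown to be \emph{strictly} negative, not merely non-positive, since this is what pins down ${\rm Nullity}(\varphi_r)=1$; this should follow from the explicit form of the reduced $r$-energy of the family $\{\varphi_\alpha\}$.
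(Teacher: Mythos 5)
Your plan is sound and its structural claims are all confirmed by the paper's computation: the rotational Killing field of $S_{\rm par}$ restricted along $\varphi_r$ does span the kernel (the paper's $S^0$ block has $I_w=0$ on constants), the equivariant mode is a strict local maximum of the reduced one-variable $r$-energy (e.g.\ for $r=3$ the reduced energy is $\pi\alpha^2/(1+4\alpha^2)^2$, maximal exactly at $\alpha^*=1/2$), and all nonzero Fourier modes turn out to be strictly positive. The genuine difference is in how you propose to obtain the Hessian operator: you would assemble $I_3$ and $I_4$ from Wang's and Maeta's general second-variation formulas, tracking the $\nabla^{N}R^{N}$ contributions and the connection form $\omega$ of your adapted frame, whereas the paper deliberately avoids those formulas (it states this explicitly at the start of its second-variation section) and instead writes the reduced Lagrangian of $E_r$ in the warped-product coordinates $(w,\alpha)$ of $S_{\rm par}$, inserts a general two-parameter variation, and reads off $I_r$ by integration by parts. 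The paper's route buys a mechanical, error-resistant computation precisely because the target is not a space form and the curvature-derivative terms in Maeta's $I_4$ are delicate; your route buys conceptual transparency (kernel from isometries, negative direction from the equivariant family) but shifts all the difficulty into correctly specializing the general formulas. The one place where your outline is materially incomplete is the positivity of $\mathcal M_r(n)$ for $n\neq0$: "a finite check of low modes" requires an explicit effective bound on where the top-order term dominates, and in practice one needs the closed-form eigenvalues as polynomial expressions in $n$ (as the paper exhibits) to carry out that verification; this is where essentially all of the content of the theorem resides, so the proposal should be regarded as a correct strategy rather than a proof.
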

\end{example}

\subsection{Proof of the results on the second variation}\label{otherproofs}
In this subsection we shall prove Theorem\link\ref{Index-theorem-r-harmonic-circles} and Theorem\link\ref{Theorem-index-par}. We shall follow an approach which could prove useful in other related examples. It is based on the direct computation of \eqref{Hessian-definition} using a general two-parameters variation. In particular, our method does not require the use of the general expression for the operator $I_2$ and its complicated generalizations to the case $r\geq3$. As a preliminary step, since it shall be necessary to carry out covariant derivatives in local coordinates, we report here a calculation of Christoffel symbols which we shall use in several circumstances.
\begin{lemma}\label{Christoffel-lemma}
Let $(N,g)= \left (\s^1 \times (a,b), f^2(\alpha)dw^2+h^2(\alpha)d\alpha^2 \right )$. Then, if we consider $w$ and $\alpha$ as the coordinate number $1$ and $2$ respectively, the Christoffel symbols of $(N,g)$ are:
\begin{equation*}\label{Christoffel-general-expression}
\Gamma_{11}^1=\Gamma_{22}^1=\Gamma_{12}^2=\Gamma_{21}^2=0 \,,\quad
\Gamma_{12}^1=\Gamma_{21}^1=\frac{f'(\alpha)}{f(\alpha)}\,,\quad
\Gamma_{11}^2=-\,\frac{f(\alpha)f'(\alpha)}{h^2(\alpha)}\,,\quad
\Gamma_{22}^2=\frac{h'(\alpha)}{h(\alpha)}\,.
\end{equation*}
\end{lemma}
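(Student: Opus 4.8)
The statement is a routine but useful computation of the Christoffel symbols of a metric of the warped-product type
\[
g = f^2(\alpha)\,dw^2 + h^2(\alpha)\,d\alpha^2
\]
on $\s^1 \times (a,b)$, with the convention that $x^1 = w$ and $x^2 = \alpha$. The plan is to apply directly the standard formula already recalled in the paper, namely
\[
\Gamma^k_{ij} = \frac{1}{2}\,g^{k\ell}\left(\frac{\partial g_{j\ell}}{\partial x^i} + \frac{\partial g_{\ell i}}{\partial x^j} - \frac{\partial g_{ij}}{\partial x^\ell}\right),
\]
and simply to read off the consequences of the fact that $g$ is diagonal with entries depending only on $\alpha = x^2$.

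First I would record the metric components and their inverse: $g_{11} = f^2(\alpha)$, $g_{22} = h^2(\alpha)$, $g_{12} = g_{21} = 0$, hence $g^{11} = 1/f^2(\alpha)$, $g^{22} = 1/h^2(\alpha)$, $g^{12} = g^{21} = 0$. The only nonvanishing partial derivatives of the metric coefficients are $\partial g_{11}/\partial x^2 = 2 f(\alpha) f'(\alpha)$ and $\partial g_{22}/\partial x^2 = 2 h(\alpha) h'(\alpha)$; every derivative with respect to $x^1 = w$ vanishes, and all mixed coefficients are identically zero. Since the metric is diagonal, for each fixed upper index $k$ only the term with $\ell = k$ survives in the contraction $g^{k\ell}(\cdots)$, so the formula reduces to $\Gamma^k_{ij} = \tfrac{1}{2} g^{kk}\bigl(\partial_i g_{jk} + \partial_j g_{ki} - \partial_k g_{ij}\bigr)$ (no sum on $k$).

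Then it is a finite check over the handful of index choices. For $\Gamma^1_{ij}$ we have the factor $g^{11} = 1/f^2$, and the bracket $\partial_i g_{j1} + \partial_j g_{1i} - \partial_1 g_{ij}$: taking $(i,j) = (1,1)$ gives $-\partial_1 g_{11} = 0$, taking $(i,j) = (2,2)$ gives $-\partial_1 g_{22} = 0$, while $(i,j) = (1,2)$ gives $\partial_2 g_{11} = 2 f f'$, yielding $\Gamma^1_{12} = \Gamma^1_{21} = f'(\alpha)/f(\alpha)$. For $\Gamma^2_{ij}$ the factor is $g^{22} = 1/h^2$: $(i,j) = (1,2)$ gives $\partial_1 g_{22} = 0$; $(i,j) = (1,1)$ gives $-\partial_2 g_{11} = -2 f f'$, so $\Gamma^2_{11} = -f(\alpha) f'(\alpha)/h^2(\alpha)$; $(i,j) = (2,2)$ gives $\partial_2 g_{22} = 2 h h'$, so $\Gamma^2_{22} = h'(\alpha)/h(\alpha)$. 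All remaining symbols, namely $\Gamma^1_{11}$, $\Gamma^1_{22}$, $\Gamma^2_{12}$, $\Gamma^2_{21}$, vanish as computed above. Collecting these yields exactly the table in the statement. There is no real obstacle here; the only thing to be careful about is the index bookkeeping and the sign in $\Gamma^2_{11}$, which comes from the $-\partial_k g_{ij}$ term, and the observation that the entries depend on $\alpha$ only, which kills every $w$-derivative and hence all the "off-diagonal" symbols.
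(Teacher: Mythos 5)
Your computation is correct and is exactly the route the paper takes: Lemma~\ref{Christoffel-lemma} is stated without a written proof, being the two\-/dimensional instance of the ``straightforward computation, based on the well-known formula'' for $\Gamma^k_{ij}$ that the paper invokes for the analogous Lemma~\ref{lemma-christoffels}. All entries, including the sign of $\Gamma^2_{11}$, check out.
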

In particular, if $N=\s^2$, i.e., $f(\alpha)=\sin \alpha$ and $h(\alpha) \equiv 1$, the Christoffel symbols become:
\begin{equation}\label{Christoffel-sfera}
\Gamma_{11}^1=\Gamma_{22}^1=\Gamma_{12}^2=\Gamma_{21}^2=\Gamma_{22}^2=0 \,,\quad
\Gamma_{12}^1=\Gamma_{21}^1=\cot \alpha\,,\quad
\Gamma_{11}^2=-\,\frac{1}{2}\,\sin (2\alpha)\,.
\end{equation}In order to prove Theorem\link\ref{Index-theorem-r-harmonic-circles} we have to separate three cases: $r=2,3$ and $4$.

\subsection{Proof of Theorem\link\ref{Index-theorem-r-harmonic-circles}, Case ${r=2}$}
To simplify notation, in this case we shall write $\varphi_k$ instead of $\varphi_{2,k}$. We describe the 2-sphere $\s^2$ by means of polar coordinates:
\begin{equation}\label{2sfera}
\s^2 = \left ( \s^1 \times [0,\pi], \, \sin^2 \alpha \, \, dw^2+ d\alpha^2 \right ) \,\, , \qquad 0 \leq w \leq 2 \pi \,, \,\, 0 \leq \alpha\leq \pi \,\, .
\end{equation}
We consider a general map $\varphi : \s^1 \to \s^2$ and write it as
\begin{equation}\label{map-general-s1-to-s2}
     \gamma  \mapsto \left ( w(\gamma),  \alpha (\gamma) \right ) \,,
\end{equation}
where $w,\alpha$ are the coordinates introduced in \eqref{2sfera}.
We recall that the local coordinates expression for the second fundamental form of a map $\varphi:M \to N$ is
\begin{equation*}
\nabla d \varphi = \left ( \nabla d \varphi \right )^\gamma_{ij} \, dx^i dx^j \otimes\frac{\partial}{\partial y^\gamma} \,\, ,
\end{equation*}
where
\begin{equation}\label{seconda-forma-coordinate-locali-bis}
\left ( \nabla d \varphi \right )^\gamma_{ij} = \varphi^\gamma _{ij} - {}^M \Gamma^k_{ij} \varphi^\gamma _{k} + {}^N \Gamma^\gamma_{\beta \delta} \varphi^\beta _{i} \varphi^\delta _{j} \,\, .
\end{equation}
Now, since $\tau (\varphi)$ is the trace of the second fundamental form, its    description in local coordinates is
\begin{equation}\label{tau-coordinate-locali}
( \tau (\varphi) )^\gamma  = g_M^{ij} \, \left ( \nabla d \varphi \right )^\gamma_{ij}\,\, .
\end{equation}
Using \eqref{Christoffel-sfera} in \eqref{tau-coordinate-locali} we find that
\begin{equation*}
    \tau(\varphi)= \tau_w \,\frac{\partial}{\partial w}+
    \tau_\alpha \,\frac{\partial}{\partial \alpha}\,\, ,
\end{equation*}
where
\begin{equation}\label{componentitau-general-s1-to-s2}
\begin{aligned}
    \tau_w&=w''(\gamma)+2 \cot (\alpha(\gamma))\,
    w'(\gamma)\,\alpha'(\gamma) \\ 
    \tau_\alpha&= \alpha''(\gamma)-\,\frac{1}{2} \sin(2\alpha(\gamma))\,
    w'^2(\gamma)\,. 
\end{aligned}
\end{equation}
The $2$-energy functional becomes
\begin{equation}\label{2-energy-general-s1-to-s2}
    E_2(\varphi)= \frac{1}{2}\,\int_0^{2\pi} \left [\sin^2 \alpha \,\left (\tau_w \right)^2 +\left (\tau_\alpha \right)^2
    \right ] \,d\gamma\,. 
\end{equation}
A general two-parameters variation $\varphi_{t,s}$ of $\varphi_k$ can be written as follows:
\begin{eqnarray}\label{biharmonic-2var-general-s1-to-s2}
    \varphi_{t,s}:\s^1&\to& \s^2 \\ \nonumber
     \gamma  &\mapsto& \left ( k\gamma+tV_1(\gamma)+sW_1(\gamma),  \alpha^*+tV_2(\gamma)+sW_2(\gamma) \right )\,, \nonumber
\end{eqnarray}
where $\alpha^*=\arcsin (1 / \sqrt r)$ and $V_j,W_j \in {C}^\infty\left ( \s^1\right ),\,j=1,2$ (in this first case, $r=2$ and so
$\alpha^*= \pi/4$). We point out that $\varphi_{0,0}=\varphi_k$
and
\begin{equation*}
\begin{aligned}
  \left . \frac{d}{dt}  \varphi_{t,0} \right |_{t=0}&= V_1
  \,\frac{\partial}{\partial w}+
    V_2 \,\frac{\partial}{\partial \alpha}=V \in C\left (\varphi_k^{-1}T\s^2 \right ) \\
    \left . \frac{d}{ds}  \varphi_{0,s} \right |_{s=0}&= W_1
  \,\frac{\partial}{\partial w}+
    W_2 \,\frac{\partial}{\partial \alpha}=W \in C\left (\varphi_k^{-1}T\s^2 \right ) \,.
    \end{aligned}
\end{equation*}
We know from \eqref{Hessian-definition} and \eqref{Operator-Ir} that
\begin{equation}\label{Operator-I2-s1-to-s2}
\left . \frac{\partial^2}{\partial t \partial s}  E_2 (\varphi_{t,s})\right |_{(0,0)}= \int_0^{2\pi} \langle I_2(V),W \rangle \, d\gamma  \,.
\end{equation}
Now we have to insert the explicit expression \eqref{biharmonic-2var-general-s1-to-s2} into \eqref{2-energy-general-s1-to-s2} and compute the left side of \eqref{Operator-I2-s1-to-s2}. We obtain
\begin{eqnarray}\label{Hessiano-0-0-biharmonic}
\left . \frac{\partial^2}{\partial t \partial s}  E_2 (\varphi_{t,s})\right |_{(0,0)}&=&\frac{1}{2} \int_0^{2\pi}\left[-2 k^4 V_2(\gamma) W_2(\gamma)+k^2 V_1'(\gamma) W_1'(\gamma)+\left(2 k V_2'(\gamma)+V_1''(\gamma)\right) \right. \nonumber\\ 
&&\left .
\left(2 k W_2'(\gamma)+W_1''(\gamma)\right) +2
   \left(V_2''(\gamma)-k V_1'(\gamma)\right) \left(W_2''(\gamma)-k W_1'(\gamma)\right)\right]d\gamma \,.
   \end{eqnarray}
It is convenient to write
\begin{equation}\label{decomposizione-I2}
\begin{aligned}
I_2 \left (V_1 \,\frac{\partial}{\partial w} \right )& =I_w \,\frac{\partial}{\partial w}+ I_\alpha \,\frac{\partial}{\partial \alpha} \\ 
 I_2 \left (V_2 \,\frac{\partial}{\partial \alpha} \right )& =J_w \,\frac{\partial}{\partial w}+ J_\alpha \,\frac{\partial}{\partial \alpha} \,.
\end{aligned}
\end{equation}

In order to compute the expressions of $I_w,\,I_\alpha,\,
J_w,\,J_\alpha$ we study \eqref{Hessiano-0-0-biharmonic} by separating the following cases.

\textit{Case }$V_2=W_2=0$: comparing \eqref{Operator-I2-s1-to-s2} and \eqref{Hessiano-0-0-biharmonic} we find
\begin{equation}\label{Iw}
\int_0^{2\pi} \langle I_2 \left (V_1
  \,\frac{\partial}{\partial w} \right ),W_1 \,\frac{\partial}{\partial w} \rangle \, d\gamma  =\int_0^{2\pi}
\left [\frac{1}{2} \left(3 k^2 V_1'(\gamma) W_1'(\gamma)+V_1''(\gamma) W_1''(\gamma)\right) \right ] d\gamma\,.
\end{equation}
Now, integrating by parts on the right side of \eqref{Iw}, we eliminate all the derivatives of $W_1$ and we obtain:
\[
\int_0^{2\pi} I_w\,W_1\,\langle
  \,\frac{\partial}{\partial w} , \,\frac{\partial}{\partial w} \rangle \, d\gamma  =\int_0^{2\pi}
\left [\frac{1}{2} \left(V_1^{(4)}(\gamma)-3 k^2 V_1''(\gamma)\right) W_1\right ] d\gamma \,.
\]
Next, using
\[
\langle
  \,\frac{\partial}{\partial w} , \,\frac{\partial}{\partial w} \rangle= \sin^2 \alpha^* = \frac{1}{2} \,,
\]
it follows that
\begin{equation}\label{Iw-biharmonic}
I_w=V_1^{(4)}(\gamma)-3 k^2 V_1''(\gamma) \,.
\end{equation}
Similarly, in the cases $V_2=W_1=0$, $V_1=W_2=0$ and $V_1=W_1=0$ respectively, we obtain:
\begin{equation}\label{Ia,Jw,Ja biharmonic}
\begin{aligned}
I_\alpha&=-2 k V_1^{(3)}(\gamma)  \\ 
J_w&= 4 k V_2^{(3)}(\gamma) \\ 
J_\alpha&=V_2^{(4)}(\gamma)-2 k^2 V_2''(\gamma)-k^4 V_2(\gamma) \,.
\end{aligned}
\end{equation}
Now we decompose ${C}\left( \varphi_k^{-1}T\s^2\right )$  in a similar fashion to \cite{MOR-last}. We recall that the spectrum of $\Delta$ on $\s^1$ is $\{ m^2 \}_{m\in\n}$ and, for
$m \in \n$, we define
\begin{equation*}
\label{sottospazi-S-m}
 S^{m^2}=\left \{ V_1\,\frac{\partial}{\partial w}\,\,:\,\, \Delta V_1= m^2 V_1 \right \} \oplus \left \{ V_2\,\frac{\partial}{\partial \alpha}\,\,:\,\, \Delta V_2= m^2 V_2 \right \} \,.
\end{equation*}
Then we know that $S^{m^2} \perp S^{m'^2}$ if $m \neq m'$, and $\oplus_{m=0}^{+\infty}\, S^{m^2}$ is dense in
${C}\left( \varphi_k^{-1}T\s^2\right )$. Moreover, $I_2$ preserves all these subspaces. Next, we observe that $\dim \left (S^0 \right )=2$ and that an orthonormal basis of $S^0$ is $\left\{u_1,u_2 \right\}$, where ($r=2$ here)
\[
u_1= \sqrt{\frac{r}{2 \pi}}\,\frac{\partial}{\partial w},\,\quad
u_2= \frac{1}{\sqrt {2\pi}}\,\frac{\partial}{\partial \alpha}\,.
\]
Now, using \eqref{decomposizione-I2}, \eqref{Iw-biharmonic} and \eqref{Ia,Jw,Ja biharmonic}, it is immediate to construct the $(2 \times 2)$-matrix which describes the restriction of $I_2$ to $S^0$:
\begin{equation*}
\left (\begin{array}{rr}
0&0 \\
0&-k^4
\end{array}
\right )
\end{equation*}
from which we deduce immediately that the contribution of $S^0$ to
the index and the nullity of $\varphi_k$ is $+1$ for both. Next, we study the subspaces $S^{m^2}$, $m \geq1$. First, we observe that $\dim \left (S^{m^2} \right )=4$ and that an orthonormal basis of $S^{m^2}$ is $\left\{u_1,u_2,u_3,u_4 \right\}$, where ($r=2$ here)
{\small \begin{equation}\label{on-bases}
u_1= \sqrt {\frac{r}{ \pi}}\,\cos (m\gamma) \,\frac{\partial}{\partial w},\;
u_2= \sqrt {\frac{r}{ \pi}}\,\sin (m\gamma) \,\frac{\partial}{\partial w},\;
u_3= \frac{1}{\sqrt {\pi}}\,\cos (m\gamma) \,\frac{\partial}{\partial \alpha},\;
u_4= \frac{1}{\sqrt {\pi}}\,\sin (m\gamma) \,\frac{\partial}{\partial \alpha}\,.
\end{equation}
}
Now, using \eqref{decomposizione-I2}, \eqref{Iw-biharmonic} and \eqref{Ia,Jw,Ja biharmonic}, we construct the $(4 \times 4)$-matrices which describe the restriction of $I_2$ to $S^{m^2}$. The outcome is:
\begin{equation*}
\left(
\begin{array}{cccc}
 m^2 \left(3 k^2+m^2\right) & 0 & 0 & -2 \sqrt{2} k m^3 \\
 0 & m^2 \left(3 k^2+m^2\right) & 2 \sqrt{2} k m^3 & 0 \\
 0 & 2 \sqrt{2} k m^3 & m^4+2 m^2 k^2-k^4 & 0 \\
 -2 \sqrt{2} k m^3 & 0 & 0 & m^4+2 m^2 k^2-k^4
\end{array}
\right) \,,
\end{equation*}
whose eigenvalues are
\begin{equation*}
\lambda_m^{\pm}=\frac{1}{2} \left(-k^4+2
   m^4+5 k^2 m^2\pm \sqrt{k^8+2 k^6 m^2+k^4 m^4+32 k^2m^6}\right)
\end{equation*}
with multiplicity equal to $2$. Now, all the $\lambda_m^+$'s are clearly positive and so they do not contribute neither to the index nor to the nullity of $\varphi_k$. As for the $\lambda_m^-$'s, we can apply Lemma 2.15 of \cite{MOR-last}: it follows that the contribution to the nullity of $\varphi_k$ is $+2$ (coming from $\lambda_{k}^-$), while the contribution to the index is $+2(k-1)$, arising from $1 \leq m \leq  (k-1)$, so that the proof of the case $r=2$ is completed.

\subsection{Proof of Theorem\link\ref{Index-theorem-r-harmonic-circles}, Case $r=3$}
We shall again simplify the notation by writing $\varphi_k$ instead of $\varphi_{3,k}$. The proof proceeds by following precisely the same steps that we carried out in the case $r=2$, but we apply the method to the $3$-energy instead of the $2$-energy \eqref{2-energy-general-s1-to-s2}. So, first, let us compute the explicit expression for the $3$-energy functional in our context:
\begin{lemma}\label{3-energy-explicit}
Let $\varphi : \s^1 \to \s^2$ be a general map as in \eqref{map-general-s1-to-s2}. Then its $3$-energy is:
\begin{equation}\label{3-energy-general-s1-to-s2}
    E_3(\varphi)= \frac{1}{2}\,\int_0^{2\pi} \left [\sin^2 \alpha \,\left ((d\tau)_w\right)^2 +\left ((d\tau)_\alpha \right)^2
    \right ] \,d\gamma \,\,,
\end{equation}
where $\tau_w,\,\tau_\alpha$ are defined in \eqref{componentitau-general-s1-to-s2} and
\begin{equation}\label{comp-dtau}
\begin{aligned}
(d\tau)_w&= \tau_w'+(\tau_\alpha\, w'+
    \tau_w\, \alpha')\cot \alpha \\ 
(d\tau)_\alpha&= \tau_\alpha'-\frac{1}{2} \,\tau_w\,\sin(2 \alpha)\,w' \,.
\end{aligned}
\end{equation}
\end{lemma}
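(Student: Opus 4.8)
The plan is to compute the $3$-energy directly from its definition $E_3(\varphi)=\frac12\int_{\s^1}|d\tau(\varphi)|^2\,d\gamma$, reducing everything to the local coordinate expression of $d\tau$ already prepared in the excerpt. First I would recall that the tension field $\tau(\varphi)$ has components $\tau_w,\tau_\alpha$ given in \eqref{componentitau-general-s1-to-s2}, so that $\tau(\varphi)=\tau_w\,\partial/\partial w+\tau_\alpha\,\partial/\partial\alpha$ is a section of $\varphi^{-1}T\s^2$. Then $d\tau$ is a $1$-form on $\s^1$ with values in $\varphi^{-1}T\s^2$, hence determined by the single vector $\nabla^\varphi_{\partial/\partial\gamma}\tau(\varphi)\in C(\varphi^{-1}T\s^2)$, which I will decompose as $(d\tau)_w\,\partial/\partial w+(d\tau)_\alpha\,\partial/\partial\alpha$.

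The key computation is to evaluate $\nabla^\varphi_{\partial/\partial\gamma}\tau(\varphi)$ via the pull-back connection, using the chain rule $\nabla^\varphi_{\partial/\partial\gamma}(\partial/\partial y^\beta)= \varphi^\delta_{\gamma}\,{}^{\s^2}\Gamma^\gamma_{\beta\delta}\,\partial/\partial y^\gamma$, where along our map $\varphi^1_\gamma=w'$ and $\varphi^2_\gamma=\alpha'$. Substituting the Christoffel symbols of $\s^2$ from \eqref{Christoffel-sfera} — namely $\Gamma^1_{12}=\Gamma^1_{21}=\cot\alpha$ and $\Gamma^2_{11}=-\tfrac12\sin(2\alpha)$, the rest zero — and differentiating $\tau_w,\tau_\alpha$ by $\gamma$, one collects the $\partial/\partial w$-component as $(d\tau)_w=\tau_w'+(\tau_\alpha w'+\tau_w\alpha')\cot\alpha$ and the $\partial/\partial\alpha$-component as $(d\tau)_\alpha=\tau_\alpha'-\tfrac12\tau_w\sin(2\alpha)\,w'$, which are exactly \eqref{comp-dtau}. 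Finally, taking the squared norm with respect to the metric $\sin^2\alpha\,dw^2+d\alpha^2$ gives $|d\tau|^2=\sin^2\alpha\,((d\tau)_w)^2+((d\tau)_\alpha)^2$, and integrating over $\s^1$ with the volume element $d\gamma$ yields \eqref{3-energy-general-s1-to-s2}.

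This lemma is essentially bookkeeping: no genuine obstacle arises, only the care needed to keep track of which index labels the form argument and which labels the bundle component, and to make sure that the term $\dot\alpha\,\nabla^{\s^2}_{\partial/\partial\alpha}(\partial/\partial\alpha)$ contributes nothing because $\Gamma^\gamma_{22}=0$ (so that the only surviving pieces come from the $\Gamma^1_{12}$ and $\Gamma^2_{11}$ entries). The one point worth double-checking is the sign and factor in the $\sin(2\alpha)$ term, which should be traced back consistently to the normalization $\Gamma^2_{11}=-\tfrac12\sin(2\alpha)$ in \eqref{Christoffel-sfera}; once that is fixed, formula \eqref{comp-dtau} and hence \eqref{3-energy-general-s1-to-s2} follow immediately.
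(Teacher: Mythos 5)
Your proposal is correct and follows essentially the same route as the paper: compute $\nabla^\varphi_{\partial/\partial\gamma}\tau(\varphi)$ via the pull-back connection using the Christoffel symbols $\Gamma^1_{12}=\Gamma^1_{21}=\cot\alpha$ and $\Gamma^2_{11}=-\tfrac12\sin(2\alpha)$ from \eqref{Christoffel-sfera}, collect the $\partial/\partial w$ and $\partial/\partial\alpha$ components to obtain \eqref{comp-dtau}, and then take the squared norm with respect to $\sin^2\alpha\,dw^2+d\alpha^2$ and integrate. The points you flag for care (the vanishing of the $\Gamma^\gamma_{22}$ contribution and the factor in the $\sin(2\alpha)$ term) are exactly the ones the paper's computation handles, so nothing is missing.
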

\begin{proof}[Proof of Lemma\link\ref{3-energy-explicit}]
We can write
\begin{equation*}
d\tau=(d\tau)_w \, d\gamma \otimes \frac{\partial}{\partial w}
+(d\tau)_\alpha \, d\gamma \otimes \frac{\partial}{\partial \alpha} \,.
\end{equation*}
The proof of the lemma amounts to showing that $(d\tau)_w$ and $(d\tau)_\alpha$ are given by the expressions in \eqref{comp-dtau}. To this purpose we compute using Lemma \ref{Christoffel-lemma}:
{\small\begin{eqnarray*}\label{cov-derivata-3-harmonic}
\nonumber
\nabla_{\partial / \partial \gamma}^{\varphi} \tau(\varphi)&=&
\nabla_{\partial / \partial \gamma}^{\varphi} \left [\tau_w \,
\frac{\partial}{\partial w}+\tau_\alpha \,
\frac{\partial}{\partial \alpha}\right ]\\ \nonumber
&=& \tau_w' \,
\frac{\partial}{\partial w}+\tau_\alpha' \,
\frac{\partial}{\partial \alpha}+\tau_w\,\nabla_{d\varphi (\partial / \partial \gamma)}^{\s^2}\frac{\partial}{\partial w}+
\tau_\alpha\,\nabla_{d\varphi (\partial / \partial \gamma)}^{\s^2}\frac{\partial}{\partial \alpha}\\ \nonumber
&=&\tau_w' \,
\frac{\partial}{\partial w}+\tau_\alpha' \,
\frac{\partial}{\partial \alpha}+\tau_w\,\nabla_{w' (\partial / \partial w)+\alpha' (\partial / \partial \alpha)}^{\s^2}\frac{\partial}{\partial w}+
\tau_\alpha\,\nabla_{w' (\partial / \partial w)+\alpha' (\partial / \partial \alpha)}^{\s^2}\frac{\partial}{\partial \alpha}\\
 \nonumber
&=&\tau_w' \,
\frac{\partial}{\partial w}+\tau_\alpha' \,
\frac{\partial}{\partial \alpha}+\tau_w\,w'\nabla_{\partial / \partial w}^{\s^2}\frac{\partial}{\partial w}+
\tau_w\,\alpha'\nabla_{\partial / \partial \alpha}^{\s^2}\frac{\partial}{\partial w}
\\ 
&&+\tau_\alpha\,w'\nabla_{\partial / \partial w}^{\s^2}\frac{\partial}{\partial \alpha}+
\tau_\alpha\,\alpha'\nabla_{\partial / \partial \alpha}^{\s^2}\frac{\partial}{\partial \alpha} \\\nonumber
&=&\tau_w' \,
\frac{\partial}{\partial w}+\tau_\alpha' \,
\frac{\partial}{\partial \alpha}+\tau_w\,w'\,\left (\Gamma_{11}^1\frac{\partial}{\partial w} +\Gamma_{11}^2\frac{\partial}{\partial \alpha}  \right )+
\tau_w\,\alpha'\,\left (\Gamma_{21}^1\frac{\partial}{\partial w} +\Gamma_{21}^2\frac{\partial}{\partial \alpha}  \right )
\\ \nonumber
&&+\tau_\alpha\,w'\,\left (\Gamma_{12}^1\frac{\partial}{\partial w} +\Gamma_{12}^2\frac{\partial}{\partial \alpha}  \right )+ 0 \\\nonumber
&=& \left [ \tau_w'+(\tau_\alpha\, w'+
    \tau_w\, \alpha')\cot \alpha \right ] \frac{\partial}{\partial w}
+\left [\tau_\alpha'-\frac{1}{2} \,\tau_w\,\sin(2 \alpha)\,w' \right ] \frac{\partial}{\partial \alpha}
\end{eqnarray*}
}
from which the conclusion of the proof follows immediately.
\end{proof}
From now on, the proof follows exactly the scheme which we have detailed in the case $r=2$: simply, we have to replace \eqref{2-energy-general-s1-to-s2} with \eqref{3-energy-general-s1-to-s2}. Therefore, since the calculations are long but straightforward, here we limit ourselves to report the relevant results. First, the version of \eqref{Hessiano-0-0-biharmonic} in this case is:
\begin{eqnarray*}\label{Hessiano-0-0-triharmonic}
\nonumber
\left . \frac{\partial^2}{\partial t \partial s}  E_3 (\varphi_{t,s})\right |_{(0,0)}&=&\frac{1}{2} \int_0^{2\pi}\frac{1}{9} \Big[-8  k^6 V_2 W_2+33 k^4 V_2' W_2'+9 \sqrt{2} k^3 V_1'' W_2'+9 \sqrt{2} k^3 V_2' W_1''  \\ \nonumber
&& +6 k^4 V_2 W_2''-2\sqrt{2}
   k^3 V_2 W_1{}^{(3)} -2 \sqrt{2} k^3 V_1{}^{(3)} W_2+20 k^4 V_1' W_1'\\ \nonumber
 &&-24 \sqrt{2} k^3 V_1' W_2''+6 k^2 V_1' W_1{}^{(3)}+6k^4 V_2'' W_2-24 \sqrt{2} k^3 V_2'' W_1'\\
 &&+54 k^2 V_2'' W_2''+9 \sqrt{2} k V_2'' W_1{}^{(3)}-6 k^2 V_1{}^{(3)} W_1'-15 k^2 V_2{}^{(3)} W_2'\\ \nonumber
  && -15 k^2
   W_2{}^{(3)} V_2'+18 k^2 V_1'' W_1''+9 \sqrt{2} k V_1{}^{(3)} W_2''-9 \sqrt{2} k V_2{}^{(3)} W_1''\\ \nonumber
   &&-9 \sqrt{2} k W_2{}^{(3)} V_1'' +3V_1{}^{(3)} W_1{}^{(3)}+9 V_2{}^{(3)} W_2{}^{(3)}\Big] \,d\gamma \nonumber\,.
\end{eqnarray*}


Next, separating cases as we did for $r=2$, we find that the operator $I_3$ is described by:
\begin{eqnarray*}\label{Ia,Jw,Ja triharmonic}
I_w&=&-\frac{1}{3}  \left(20 k^4 V_1''(\gamma)-30 k^2 V_1^{(4)}(\gamma)+3 V_1^{(6)}(\gamma)\right)  \\ \nonumber
I_\alpha&=&\frac{\sqrt 2}{9}  k  \left(18 V_1^{(5)}(\gamma)-35 k^2 V_1^{(3)}(\gamma)\right)  \\ \nonumber
J_w&=& -\frac{\sqrt 2}{3}  k \left(-35 k^2 V_2^{(3)}(\gamma)+18 V_2^{(5)}(\gamma) \right) \\ \nonumber
J_\alpha&=&\frac{1}{9} \left(-8 k^6 V_2(\gamma)-21 k^4 V_2''(\gamma) +84 k^2 V_2^{(4)}(\gamma)-9 V_2^{(6)}(\gamma) \right) \,.  \nonumber
\end{eqnarray*}
From these expressions it is easy to deduce that the contribution of $S^0$ to
the index and the nullity of $\varphi_k$ is $+1$ for both. Next, we study the $4$-dimensional subspaces $S^{m^2}$, $m \geq1$ and find that the relevant matrices (with respect to the orthonormal basis \eqref{on-bases} with $r=3$) are
\begin{equation}\label{matrici-I3-Sm}
\left(
\begin{array}{cccc}
 A_{m,k}& 0 & 0 & -C_{m,k} \\
 0 & A_{m,k} & C_{m,k} & 0 \\
 0 &C_{m,k} & B_{m,k}& 0 \\
 -C_{m,k} & 0 & 0 &B_{m,k}
\end{array}
\right)\,\,,
\end{equation}
where we have set:
\begin{eqnarray*}
A_{m,k}&=& \frac{1}{3} m^2 \left(20 k^4+30 m^2 k^2+3 m^4\right) \\ \nonumber
B_{m,k}&=& \frac{1}{9} \left(-8 k^6+21 m^2 k^4+84 m^4 k^2+9 m^6\right)  \\ \nonumber
C_{m,k}&=&  \frac{1}{3} \sqrt{\frac{2}{3}} k m^3 \left(35 k^2+18 m^2\right)  \,.\nonumber
\end{eqnarray*}
The eigenvalues of the matrices \eqref{matrici-I3-Sm} are
\begin{eqnarray*}
\nonumber
\lambda_m^{\pm}&=&\frac{1}{18} \Big(-8 k^6+81 k^4 m^2+174 k^2 m^4+18 m^6 \\ 
&&\pm \sqrt{64 k^{12}+624 k^{10} m^2+1617 k^8 m^4+29868 k^6 m^6+30276 k^4 m^8+7776 k^2 m^{10}} \Big)
\end{eqnarray*}
with multiplicity equal to $2$. Now, all the $\lambda_m^+$'s are clearly positive and so they do not contribute neither to the index nor to the nullity of $\varphi_k$. As for the $\lambda_m^-$'s, we carry out the relevant analysis in the following technical lemma:
\begin{lemma}\label{lemma-tecnico2}
If $1\leq m \leq (k-1)$, then $\lambda_{m}^- <0$. If $m=k$, then $\lambda_{m}^- =0$. If $m>k$, then $\lambda_{m}^- >0$.
\end{lemma}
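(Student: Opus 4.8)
The plan is to reduce the sign of $\lambda_m^-$ to the sign of a single determinant, and thence to a polynomial inequality in $m$ and $k$.

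\textbf{First step: reduction to a $2\times 2$ determinant.} After reordering the orthonormal basis \eqref{on-bases} as $(u_1,u_4,u_2,u_3)$, the $4\times 4$ matrix in \eqref{matrici-I3-Sm} becomes block diagonal, the two $2\times 2$ blocks having diagonal entries $(A_{m,k},B_{m,k})$ and off-diagonal entries $-C_{m,k}$ and $+C_{m,k}$ respectively. Both blocks share the characteristic polynomial $\lambda^2-(A_{m,k}+B_{m,k})\lambda+(A_{m,k}B_{m,k}-C_{m,k}^2)$, which accounts for the stated formula for $\lambda_m^\pm$ and for the multiplicity $2$. Since $A_{m,k}=\frac13 m^2(20k^4+30m^2k^2+3m^4)>0$ for all $m,k\ge 1$, the elementary classification of symmetric $2\times 2$ matrices with a positive diagonal entry applies: writing $D_{m,k}:=A_{m,k}B_{m,k}-C_{m,k}^2$, if $D_{m,k}<0$ the block is indefinite and $\lambda_m^-<0$; if $D_{m,k}=0$ then $B_{m,k}=C_{m,k}^2/A_{m,k}\ge 0$, the block is positive semidefinite of rank one and $\lambda_m^-=0$; if $D_{m,k}>0$ then $B_{m,k}>0$, the block is positive definite and $\lambda_m^->0$. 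Hence $\operatorname{sign}\lambda_m^-=\operatorname{sign}D_{m,k}$, and the lemma is equivalent to the single assertion $\operatorname{sign}D_{m,k}=\operatorname{sign}(m^2-k^2)$.

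\textbf{Second step: the factor $m^2-k^2$.} Writing $x=m^2$, $y=k^2$, each of $A_{m,k}$, $B_{m,k}$ and $C_{m,k}^2=\tfrac{2}{27}m^6k^2(35k^2+18m^2)^2$ is a homogeneous polynomial in $(x,y)$, of degrees $3$, $3$ and $6$, so $D_{m,k}$ is homogeneous of degree $6$ in $(x,y)$. It vanishes on the diagonal $x=y$: either one substitutes $m=k$ into the explicit expressions for $A_{m,k},B_{m,k},C_{m,k}$ and checks $D_{k,k}=0$ directly, or one observes that the three Killing fields of $\s^2$ always yield Jacobi fields along the $r$-harmonic map $\varphi_k$, two of which lie in $S^{k^2}$, forcing $\lambda_k^-=0$ and hence $D_{k,k}=0$. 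Consequently $(x-y)$ divides the polynomial $D_{m,k}$, so that $D_{m,k}=(m^2-k^2)\,P(m^2,k^2)$ for a homogeneous polynomial $P$ of degree $5$.

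\textbf{Third step and main obstacle.} It then remains to prove $P(x,y)>0$ for all $x,y>0$: I would expand $A_{m,k}B_{m,k}-C_{m,k}^2$, perform the polynomial division by $m^2-k^2$, and read off that all the coefficients of $P$ are nonnegative (and not all zero), whence $P>0$ on the open first quadrant. Combined with the second step this yields $\operatorname{sign}D_{m,k}=\operatorname{sign}(m^2-k^2)$, i.e.\ $\lambda_m^-<0$ for $1\le m\le k-1$, $\lambda_k^-=0$, and $\lambda_m^->0$ for $m>k$, as claimed. The only genuinely laborious point is this final expansion: $A_{m,k}B_{m,k}-C_{m,k}^2$ is a degree-$6$ binary form with the somewhat awkward term $C_{m,k}^2$, and the long division by $m^2-k^2$ must be carried out carefully to avoid arithmetic slips. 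Should some coefficient of $P$ unexpectedly come out negative, the fallback is to estimate $P$ termwise using $x,y\ge 1$ and homogeneity, or to look for a further factorization of $P$.
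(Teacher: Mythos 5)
Your route is genuinely different from the paper's. The paper works directly with the explicit eigenvalue formula: it substitutes $m=ck$, sets $y=c^2$, obtains a function $p(y)$ (a cubic minus a square root) with the same sign as $\lambda_m^-$, checks $p(1)=0$, and proves $p'(y)>0$ by reducing to the absence of positive roots of a degree-nine polynomial $q(y)$, split as $q_A+q_B$ with both pieces visibly positive. Your reduction of the problem to $\operatorname{sign}\lambda_m^-=\operatorname{sign}\big(A_{m,k}B_{m,k}-C_{m,k}^2\big)$ is correct (the block decomposition and the $2\times 2$ classification both check out, using $A_{m,k}>0$), it eliminates the square root from the outset, and the divisibility of $D_{m,k}$ by $m^2-k^2$ is also correct. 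This is an attractive simplification in principle.

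However, the concrete mechanism you propose for the final step fails: the quotient does \emph{not} have nonnegative coefficients. With $x=m^2$, $y=k^2$ one finds
\[
27\big(A_{m,k}B_{m,k}-C_{m,k}^2\big)=27x^6-126x^5y+243x^4y^2-164x^3y^3+180x^2y^4-160xy^5=(x-y)\,x\,Q(x,y),
\]
where $Q(x,y)=27x^4-99x^3y+144x^2y^2-20xy^3+160y^4$, so the coefficients $-99$ and $-20$ survive and "reading off nonnegativity" is not available. Your fallback does work and is still lighter than the paper's degree-nine polynomial: for instance
\[
Q(x,y)=27x^2\Big(x-\tfrac{11}{6}\,y\Big)^2+\Big(\tfrac{213}{4}\,x^2y^2-20\,xy^3+160\,y^4\Big)>0
\]
for $x,y>0$, since the second bracket is $y^2$ times a quadratic form in $(x,y)$ with negative discriminant $400-4\cdot\tfrac{213}{4}\cdot 160<0$ and positive leading coefficient. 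With this, $\operatorname{sign}D_{m,k}=\operatorname{sign}(m^2-k^2)$ and the lemma follows. So your proof is completable as outlined, but as written the decisive positivity assertion rests on a prediction that is false, and the argument is not finished until the fallback is actually carried out.
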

\begin{proof} [Proof of Lemma~\ref{lemma-tecnico2}]
The eigenvalue $\lambda_{m}^- $ has the same sign of the expression
\begin{equation}\label{2}
\begin{aligned}
&-8 k^6+81 k^4 m^2+174 k^2 m^4+18 m^6+ \\ 
&- \sqrt{64 k^{12}+624 k^{10} m^2+1617 k^8 m^4+29868 k^6 m^6+30276 k^4 m^8+7776 k^2 m^{10}}\,.
\end{aligned}
\end{equation}
If we set $m=ck$ into \eqref{2} and divide by $k^6$ we obtain:
\[
18 c^6+174 c^4+81 c^2-8-\sqrt{7776 c^{10}+30276 c^8+29868 c^6+1617 c^4+624 c^2+64} \,.
\]
Now we set $y=c^2$ and rewrite the previous expression as
\begin{equation*}\label{p(y)}
18 y^3+174 y^2+81 y-8-\sqrt{7776 y^{5}+30276 y^4+29868 y^3+1617 y^2+624 y+64} =p(y)\,.
\end{equation*}
Since $p(1)=0$, it is sufficient to show that $p'(y)>0$ if $y>0$. We find
\begin{equation*}\label{p'(y)}
p'(y)=54 y^2+348 y+81-\frac{3 \left(6480 y^4+20184 y^3+14934 y^2+539 y+104\right)}{\sqrt{7776 y^5+30276 y^4+29868 y^3+1617 y^2+624 y+64}}\,.
\end{equation*}
Computing we find that $p'(y)>0$ if and only if the following polynomial has no positive root:
\begin{eqnarray*}\label{q(y)}
q(y)&=&22674816 y^9+2624400 y^8-119544336 y^7+88606548 y^6+210300192 y^5
\\ \nonumber
&&-99086004 y^4+187882848 y^3+23527152 y^2+6693120 y+322560 \,. \nonumber
\end{eqnarray*}
Now we rewrite $q(y)=q_A(y)+q_B(y)$ where:
\begin{eqnarray}\label{q(y)-bis}
\nonumber
q_A(y)&=&y^3 \left(20000000 y^2-99086004 y+187882848\right)
\\ \nonumber
q_B(y)&=&4 \big(5668704 y^9+656100 y^8-29886084 y^7+22151637 y^6\\ \nonumber
&&+47575048 y^5+5881788 y^2+1673280 y+80640\big)\\ \nonumber
&=&4 \big(y^5(5668704 y^4-29886084 y^2+47575048)\\\nonumber
&&+656100 y^8+22151637 y^6+5881788 y^2+1673280 y+80640\big)\,. \nonumber
\end{eqnarray}
Now it is easy to check that both $q_A(y)$ and $q_B(y)$ are positive for $y>0$ and so the proof of the lemma is completed.
\end{proof}
Now we apply Lemma\link\ref{lemma-tecnico2} and conclude that the contribution of the $\lambda_m^-$'s to the nullity of $\varphi_k$ is $+2$ (coming from $\lambda_{k}^-$), while the contribution to the index is $+2(k-1)$, arising again from $1 \leq m \leq  (k-1)$, so that the proof of the case $r=3$ is ended.
\subsection{Proof of Theorem\link\ref{Index-theorem-r-harmonic-circles}, Case $r=4$}
The scheme of the proof is as in the previous cases. We write $\varphi_k$ for $\varphi_{4,k}$ and, instead of Lemma\link\ref{3-energy-explicit}, here we use:
\begin{lemma}\label{4-energy-explicit}
Let $\varphi : \s^1 \to \s^2$ be a general map as in \eqref{map-general-s1-to-s2}. Then its $4$-energy is:
\begin{eqnarray*}
    E_4(\varphi)= \frac{1}{2}\,\int_0^{2\pi} &\Big[&\sin^2 \alpha \,\Big ((d\tau)_w'+\big((d\tau)_\alpha\, w'+
    (d\tau)_w\, \alpha'\big )\cot \alpha \Big )^2 \nonumber\\
    &&+\Big ((d\tau)_\alpha'-\frac{1}{2} \,(d\tau)_w\,\sin(2 \alpha)\,w' \Big )^2
    \Big] \,d\gamma \,\,,
\end{eqnarray*}
where $(d\tau)_w,\,(d\tau)_\alpha$ are defined in \eqref{comp-dtau}.
\end{lemma}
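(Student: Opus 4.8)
The plan is to deduce the formula for $E_4(\varphi)$ from the one already obtained for $E_3(\varphi)$ in Lemma~\ref{3-energy-explicit}, simply by differentiating covariantly once more. Recall from \eqref{2s-energia} (with $s=2$) that
\[
E_4(\varphi)=\frac12\int_{\s^1}\big|\overline{\Delta}\tau(\varphi)\big|^2\,d\gamma,\qquad \overline{\Delta}=d^{*}d .
\]
Since the domain is the flat circle with arc-length coordinate $\gamma$, one has $\nabla^{\s^1}_{\partial/\partial\gamma}\partial/\partial\gamma=0$, hence
\[
\overline{\Delta}\tau(\varphi)=-\,\nabla^{\varphi}_{\partial/\partial\gamma}\nabla^{\varphi}_{\partial/\partial\gamma}\tau(\varphi)=-\,\nabla^{\varphi}_{\partial/\partial\gamma}\sigma,\qquad \sigma:=(d\tau)\left(\tfrac{\partial}{\partial\gamma}\right)=(d\tau)_w\,\frac{\partial}{\partial w}+(d\tau)_\alpha\,\frac{\partial}{\partial\alpha},
\]
where $(d\tau)_w,(d\tau)_\alpha$ are the functions of $\gamma$ given by \eqref{comp-dtau}. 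Consequently $|\overline{\Delta}\tau(\varphi)|^2=|\nabla^{\varphi}_{\partial/\partial\gamma}\sigma|^2$, so the whole task reduces to computing $\nabla^{\varphi}_{\partial/\partial\gamma}\sigma$ and its norm.

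The key observation is that the covariant-derivative computation performed in the proof of Lemma~\ref{3-energy-explicit}, which produced \eqref{comp-dtau}, used only the Christoffel symbols \eqref{Christoffel-sfera} of $\s^2$ together with the fact that $\tau(\varphi)=\tau_w\,\partial/\partial w+\tau_\alpha\,\partial/\partial\alpha$ is a section of $\varphi^{-1}T\s^2$ along the curve; it did not use the explicit expressions \eqref{componentitau-general-s1-to-s2} for $\tau_w,\tau_\alpha$. Applying that computation verbatim with $\sigma$ in place of $\tau(\varphi)$, i.e. with $(d\tau)_w,(d\tau)_\alpha$ replacing $\tau_w,\tau_\alpha$, yields
\[
\nabla^{\varphi}_{\partial/\partial\gamma}\sigma=\Big[(d\tau)_w'+\big((d\tau)_\alpha\,w'+(d\tau)_w\,\alpha'\big)\cot\alpha\Big]\frac{\partial}{\partial w}+\Big[(d\tau)_\alpha'-\tfrac12(d\tau)_w\,\sin(2\alpha)\,w'\Big]\frac{\partial}{\partial\alpha}.
\]
Taking the squared norm of this section with respect to the metric $\sin^2\alpha\,dw^2+d\alpha^2$ of $\s^2$ from \eqref{2sfera} and inserting the result into $E_4(\varphi)=\tfrac12\int_{\s^1}|\nabla^{\varphi}_{\partial/\partial\gamma}\sigma|^2\,d\gamma$ gives precisely the claimed expression. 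Substituting \eqref{comp-dtau} and \eqref{componentitau-general-s1-to-s2} then produces, if desired, a fully explicit formula in $w,\alpha$ and their $\gamma$-derivatives.

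I do not expect a genuine obstacle here: the statement is the $r=4$ counterpart of Lemma~\ref{3-energy-explicit}, and its proof is obtained by iterating once more the single covariant-differentiation step already spelled out there. The only points requiring routine care are the sign convention for $\overline{\Delta}$ on the one-dimensional domain — immaterial once the norm is squared — and bookkeeping of which quantities are differentiated with respect to $\gamma$ along the curve; no new geometric input is needed.
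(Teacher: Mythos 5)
Your proposal is correct and is essentially the paper's own argument: the paper likewise identifies the integrand of $E_4$ with the squared norm of $-d^*d\tau=\left(\nabla d\tau\right)_{11}^1\,\partial/\partial w+\left(\nabla d\tau\right)_{11}^2\,\partial/\partial\alpha$ and computes these two components via the Christoffel symbols \eqref{Christoffel-sfera}, which on the one-dimensional flat domain is exactly your $\nabla^{\varphi}_{\partial/\partial\gamma}\sigma$. Your observation that the covariant-differentiation step of Lemma~\ref{3-energy-explicit} applies verbatim with $(d\tau)_w,(d\tau)_\alpha$ in place of $\tau_w,\tau_\alpha$ is precisely how the paper iterates the computation.
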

\begin{proof}[Proof of Lemma\link\ref{4-energy-explicit}] 
We write
\begin{equation*}
-d^*d\tau=\left( \nabla d\tau\right)_{11}^1 \,\frac{\partial}{\partial w}+ \left( \nabla d\tau\right)_{11}^2 \,\frac{\partial}{\partial \alpha}\,.
\end{equation*}
Then the proof reduces to showing that
\begin{equation}\label{comp-d*dtau}
\begin{aligned}
\left( \nabla d\tau\right)_{11}^1&= (d\tau)_w'+\Big((d\tau)_\alpha\, w'+
    (d\tau)_w\, \alpha' \Big )\cot \alpha \\ 
\left( \nabla d\tau\right)_{11}^2&= (d\tau)_\alpha'-\frac{1}{2} \,(d\tau)_w\,\sin(2 \alpha)\,w' \,.
\end{aligned}
\end{equation}
This assertion can be verified using \eqref{seconda-forma-coordinate-locali-bis}. Indeed, we have (for simplicity, we write here only the nonvanishing Christoffel symbols):
\begin{equation*}
\begin{aligned}
\left( \nabla d\tau\right)_{11}^1&= (d\tau)_w'+\Gamma_{12}^1
(d\tau)_\alpha\, w'+\Gamma_{21}^1
(d\tau)_w\, \alpha'\\ 
\left( \nabla d\tau\right)_{11}^2&= (d\tau)_\alpha'+\Gamma_{11}^2
(d\tau)_w\, w' \,\,.
\end{aligned}
\end{equation*}
Now, using again Lemma\link\ref{Christoffel-lemma}, it is easy to obtain \eqref{comp-d*dtau} and so the proof of the lemma is ended.
\end{proof}
The next relevant computation for the $4$-energy case yields:
\begin{eqnarray*}\label{Ia,Jw,Ja 4-harmonic}
I_w&=&-\frac{189}{16} k^6 V_1''(\gamma)+\frac{315}{8} k^4 V_1^{(4)}(\gamma)-21 k^2 V_1^{(6)}(\gamma)+ V_1^{(8)}(\gamma) \nonumber\\ \nonumber
I_\alpha&=&-\frac{1}{64} \sqrt{3} k \left(399 k^4 V_1^{(3)}(\gamma)-644 k^2 V_1^{(5)}(\gamma)+128 V_1^{(7)}(\gamma)\right)  \\ \nonumber
J_w&=& \frac{1}{16} \sqrt{3} k \left(399 k^4 V_2^{(3)}(\gamma)-644 k^2 V_2^{(5)}(\gamma)+128 V_2^{(7)}(\gamma)\right)\\ \nonumber
J_\alpha&=&-\frac{27}{32} k^8 V_2(\gamma)-\frac{171}{64} k^6 V_2''(\gamma)+\frac{505}{16} k^4 V_2^{(4)}(\gamma)-\frac{41}{2} k^2 V_2^{(6)}(\gamma)+V_2^{(8)}(\gamma) \,. \nonumber
\end{eqnarray*}
From these expressions it is easy to deduce that the contribution of $S^0$ to
the index and the nullity of $\varphi_k$ is $+1$ for both. Next, we study the $4$-dimensional subspaces $S^{m^2}$, $m \geq1$ and find that the relevant matrices (with respect to the orthonormal basis \eqref{on-bases} with $r=4$) are
\begin{equation}\label{matrici-I4-Sm}
\left(
\begin{array}{cccc}
 A_{m,k}& 0 & 0 & -C_{m,k} \\
 0 & A_{m,k} & C_{m,k} & 0 \\
 0 &C_{m,k} & B_{m,k}& 0 \\
 -C_{m,k} & 0 & 0 &B_{m,k}
\end{array}
\right)\,\,,
\end{equation}
where now we have set:
\begin{eqnarray*}
A_{m,k}&=& \frac{1}{16} m^2 \left(189 k^6+630 m^2 k^4+336 m^4 k^2+16 m^6\right)\\ \nonumber
B_{m,k}&=&\frac{1}{64} \left(-54 k^8+171 m^2 k^6+2020 m^4 k^4+1312 m^6 k^2+64 m^8\right)  \\ \nonumber
C_{m,k}&=&  \frac{1}{32} \sqrt{3} k m^3 \left(399 k^4+644 m^2 k^2+128 m^4\right)\,. \nonumber
\end{eqnarray*}
The eigenvalues of the matrices \eqref{matrici-I4-Sm} are:
\begin{eqnarray}\label{autovalori-I4-Sm}
\nonumber
\lambda_m^{\pm}&=&\frac{1}{128} \Big(-54 k^8 + 927 k^6 m^2 + 4540 k^4 m^4 + 2656 k^2 m^6 + 128 m^8 \\ \nonumber &&\pm
    \sqrt{[2916 k^{16} + 63180 k^{14} m^2 + 396225 k^{12} m^4 +
    8230104 k^{10} m^6 } \\ \nonumber
   && \overline{+ 24955216 k^8 m^8 + 24842240 k^6 m^{10}+
    7914496 k^4 m^{12} + 786432 k^2 m^{14}]}\,\Big ) 
\end{eqnarray}
with multiplicity equal to $2$. Now the conclusion of the proof can be obtained by an argument very similar to Lemma\link\ref{lemma-tecnico2} and so we omit further details.
\begin{remark}\label{conjecture} If we put together Lemmata\link\ref{3-energy-explicit} and \ref{4-energy-explicit} we are able to obtain a recursive expression for the $r$-energy of general maps $\varphi\,:\,\s^1 \to \s^2$. More precisely, for $r \geq 3$ we define
\begin{align*}
\mathcal{T}_{2,w}&= \tau_w \,\,\quad {\rm and} \qquad \mathcal{T}_{2,\alpha}= \tau_\alpha \,\,.\\ \nonumber
\mathcal{T}_{r,w}&= \left (\mathcal{T}_{(r-1),w}\right)'+
\Big(\mathcal{T}_{(r-1),\alpha}\, w'+
    \mathcal{T}_{(r-1),w}\, \alpha' \Big )\cot \alpha \\ \nonumber
    \mathcal{T}_{r,\alpha}&= \left (\mathcal{T}_{(r-1),\alpha}\right)'-\,\frac{1}{2}\,
\mathcal{T}_{(r-1),w}\,\sin(2 \alpha)\, w' \nonumber
\end{align*}
where $\tau_w$ and $\tau_\alpha$ are given in \eqref{componentitau-general-s1-to-s2}.
Then, for $r \geq3$, the expression for the $r$-energy of a general map $\varphi\,:\,\s^1 \to \s^2$ as in \eqref{map-general-s1-to-s2} is given by
\begin{equation}\label{r-energy-general-s1-to-s2}
    E_r(\varphi)= \frac{1}{2}\,\int_0^{2\pi} \left [\sin^2 \alpha \,\left ( \mathcal{T}_{r,w}\right)^2+\left( \mathcal{T}_{r,\alpha} \right )^2\, \right ] \,d\gamma \,\,.
\end{equation}
Using \eqref{r-energy-general-s1-to-s2} and performing suitable computer aided computations it is possible to apply the methods of Theorem\link\ref{Index-theorem-r-harmonic-circles} to the cases $r \geq 5$. In particular, we were able to verify that
 the conclusion \eqref{*} of Theorem\link\ref{Index-theorem-r-harmonic-circles} is true when $r=5,6$. Since the calculations involved are huge we do not include them here. One of the difficulties to tackle the general case is the fact that $\alpha^*$ depends on $r$ and so it is difficult to apply an induction argument.
\end{remark}
Next, we provide the proof of Theorem\link\ref{Theorem-index-par}. Also in this case it is convenient to separate the cases $r=3$ and $r=4$. The schemes of the proofs are precisely as those of Theorem\link\ref{Index-theorem-r-harmonic-circles}. Therefore, here we just report the main steps without inserting all the details.

\subsection{Proof of Theorem\link\ref{Theorem-index-par}, Case $r=3$}
We observe that $S_{par}$ is a manifold as in Lemma\link\ref{Christoffel-lemma}, with $f(\alpha)=\alpha$ and $h(\alpha)=\sqrt{1+4\alpha^2}$. Therefore, we can describe a general map $\varphi:\s^1 \to S_{par}$ with respect to local coordinates as in \eqref{map-general-s1-to-s2}. Now, the first step is to compute the $3$-energy: this can be done as in Lemma\link\ref{3-energy-explicit} using Lemma\link\ref{Christoffel-lemma}. The result is:
\begin{equation}\label{3-energy-par-general-s1-to-s2}
    E_3(\varphi)= \frac{1}{2}\,\int_0^{2\pi} \left [ \alpha^2 \,\left ((d\tau)_w\right)^2 +(1+4\alpha^2)\,\left ((d\tau)_\alpha \right)^2
    \right ] \,d\gamma \,,
\end{equation}
where
\begin{eqnarray*}\label{comp-taud-tau-par}
\tau_w&=&w''+\frac{2}{\alpha} \,w'\,\alpha'\\ \nonumber
\tau_\alpha&=&\alpha''-\,\frac{\alpha}{1+4\alpha^2}\,w'^2 +\frac{4\alpha}{1+4\alpha^2}\,\alpha'^2\\ \nonumber
(d\tau)_w&=& \tau_w'+\,\frac{1}{\alpha}\,(\tau_\alpha\, w'+
    \tau_w\, \alpha') \\ \nonumber
(d\tau)_\alpha&=& \tau_\alpha'-\frac{\alpha}{1+4\alpha^2} \,\tau_w\,w'
+\frac{4\alpha}{1+4\alpha^2}\,\tau_\alpha\,\alpha' \,.\nonumber
\end{eqnarray*}
Then the direct calculation of \eqref{Operator-I2-s1-to-s2} leads us to say that the operator $I_3$ is now described by:
\begin{equation}\label{Ia,Jw,Ja 3-harmonic-par}
\begin{aligned}
I_w&=\frac{1}{4} \left(-15  V_1''(\gamma )+30  V_1{}^{(4)}(\gamma )-4 V_1^{(6)}(\gamma )\right)\\ 
I_\alpha&=\frac{1}{4} \left(3  V_1^{(5)}(\gamma )-\frac{35}{8}  V_1^{(3)}(\gamma )\right) \\ 
J_w&=\frac{35}{2}  V_2{}^{(3)}(\gamma )-12  V_2{}^{(5)}(\gamma )\\ 
J_\alpha&=\frac{1}{4} \left(-\frac{1}{4}
   V_2(\gamma )-\frac{21}{8} V_2''(\gamma )+14  V_2^{(4)}(\gamma )-2 V_2^{(6)}(\gamma )\right) \,.
\end{aligned}
\end{equation}
Now, direct inspection shows that the contribution of $S^0$ to
the index and the nullity is $+1$ for both. Next, we study the $4$-dimensional subspaces $S^{m^2}$, $m \geq1$. An orthonormal basis is
\begin{equation}\label{base-par}
\begin{array}{lclcl}
u_1&=& \dfrac{2}{ \sqrt{\pi}}\,\cos (m\gamma) \,\dfrac{\partial}{\partial w},\quad
u_2&=& \dfrac{2}{\sqrt{ \pi}}\,\sin (m\gamma) \,\dfrac{\partial}{\partial w},\\
u_3&=& \dfrac{1}{2\sqrt {\pi}}\,\cos (m\gamma) \,\dfrac{\partial}{\partial \alpha}\,\quad
u_4&=& \dfrac{1}{2\sqrt {\pi}}\,\sin (m\gamma) \,\dfrac{\partial}{\partial \alpha}\,.
\end{array}
\end{equation}
We find that, with respect to this basis, the relevant matrices are
\begin{equation*}
\left(
\begin{array}{cccc}
 A_{m}& 0 & 0 & -C_{m} \\
 0 & A_{m} & C_{m} & 0 \\
 0 &C_{m} & B_{m}& 0 \\
 -C_{m} & 0 & 0 &B_{m}
\end{array}
\right)\,,
\end{equation*}
where now we have set:
\begin{equation}\label{coeff-matrici-par-I3}
\begin{aligned}
A_{m}&=\frac{1}{4} m^2 \left(15 +30 m^2 +4 m^4\right)\\ 
B_{m}&= \frac{1}{32} \left(-2 +21 m^2 +112 m^4 +16 m^6\right)  \\ 
C_{m}&= \frac{1}{8}  m^3 \left(35 +24 m^2\right) \,.
\end{aligned}
\end{equation}
The eigenvalues are:
\begin{eqnarray}\label{autovalori-I3-par}
\nonumber \lambda_m^{\pm}&=&\frac{1}{64} \Big(-2 +141 m^2+352  m^4+48 m^6 \\
&&\pm
  \sqrt{4 +396  m^2+10313 m^4+103808  m^6}\\
  &&\nonumber\overline{+127072  m^8+40960  m^{10}+256 m^{12}}\Big)
\end{eqnarray}
each of them with multiplicity equal to $2$. Now a routine analysis shows that all the $\lambda_m^{\pm}$'s are positive and so the proof is ended.
\subsection{Proof of Theorem\link\ref{Theorem-index-par}, Case $r=4$}
To carry out this proof, we perform computations as above and find that we have to replace \eqref{3-energy-par-general-s1-to-s2}, \eqref{Ia,Jw,Ja 3-harmonic-par}, \eqref{base-par}, \eqref{coeff-matrici-par-I3}, \eqref{autovalori-I3-par} with \eqref{4-energy-par-general-s1-to-s2}, \eqref{Ia,Jw,Ja 4-harmonic-par}, \eqref{base-par-4}, \eqref{coeff-matrici-par-I4} and \eqref{eigenvalues-I4-par} respectively:
\begin{equation}\label{4-energy-par-general-s1-to-s2}
    E_4(\varphi)= \frac{1}{2}\,\int_0^{2\pi} \left [ \alpha^2 \,\left ((\nabla d\tau)_{11}^1\right)^2 +(1+4\alpha^2)\,\left ((\nabla d\tau)_{11}^2 \right)^2
    \right ] \,d\gamma \,\,,
\end{equation}
where
\begin{eqnarray*}
\left( \nabla d\tau\right)_{11}^1&=& (d\tau)_w'+\Big((d\tau)_\alpha\, w'+
    (d\tau)_w\, \alpha' \Big )\,\frac{1}{\alpha} \\ \nonumber
\left( \nabla d\tau\right)_{11}^2&=&(d\tau_\alpha)'-\frac{\alpha}{1+4\alpha^2} \,(d\tau_w)\,w'
+\frac{4\alpha}{1+4\alpha^2}\,(d\tau_\alpha)\,\alpha' \,.\nonumber
\end{eqnarray*}

\begin{equation}\label{Ia,Jw,Ja 4-harmonic-par}
\begin{array}{lcl}
I_w&=&\frac{1}{27} \left(-224 V_1''(\gamma )+840 V_1^{(4)}(\gamma )-504 V_1^{(6)}(\gamma )+27
   V_1^{(8)}(\gamma )\right) \\
I_\alpha&=&-\frac{1}{81} \sqrt{2} \left(266 V_1^{(3)}(\gamma )-483 V_1^{(5)}(\gamma )+108 V_1^{(7)}(\gamma )\right)  \\
J_w&=& 8 \left(\frac{133}{27} \sqrt{2}  V_2^{(3)}(\gamma )-\frac{161 V_2^{(5)}(\gamma )}{9 \sqrt{2}}+2 \sqrt{2}
   V_2^{(7)}(\gamma )\right)\\
J_\alpha&=&\frac{2}{3} \left(-\frac{32}{81} V_2(\gamma )-\frac{76}{27} V_2''(\gamma )+\frac{1010}{27} V_2^{(4)}(\gamma )-\frac{82}{3} V_2^{(6)}(\gamma )+\frac{3}{2} V_2^{(8)}(\gamma )\right)
\end{array}
\end{equation}

\begin{equation}\label{base-par-4}
\begin{array}{lclcl}
u_1&=& \dfrac{2\sqrt 2}{ \sqrt{\pi}}\,\cos (m\gamma) \,\dfrac{\partial}{\partial w},\quad
u_2&=& \dfrac{2 \sqrt 2}{\sqrt{ \pi}}\,\sin (m\gamma) \,\dfrac{\partial}{\partial w},\\
u_3&=& \dfrac{\sqrt 2}{\sqrt {3\pi}}\,\cos(m\gamma) \,\dfrac{\partial}{\partial \alpha},\quad
u_4&=& \dfrac{\sqrt 2}{\sqrt {3\pi}}\,\sin (m\gamma) \,\dfrac{\partial}{\partial \alpha}\,.
\end{array}
\end{equation}

\begin{eqnarray}\label{coeff-matrici-par-I4}\nonumber
A_{m}&=& \frac{1}{27} m^2 \left(224 +840 m^2 +504 m^4 +27 m^6\right)\\
B_{m}&=&  \frac{1}{243} \left(-64 +456 m^2 +6060 m^4 +4428 m^6 +243 m^8\right) \\ \nonumber
C_{m}&=&  \frac{2}{27} \sqrt{\frac{2}{3}}  m^3 \left(266 +483 m^2 +108 m^4\right)\nonumber
\end{eqnarray}
and
\begin{eqnarray}\label{eigenvalues-I4-par}
\nonumber\lambda_m^{\pm}&=&(1/243)\Big [-32 +1236  m^2+6810 m^4+4482 m^6+243 m^8 \\
&&\pm 2 \sqrt{256 +12480  m^2+164100  m^4+4114188  m^6+14037309  m^8}\\ \nonumber
&& \overline{+15720480  m^{10}+5634441
   m^{12}+629856  m^{14}} \Big] \,. \nonumber
\end{eqnarray}
\begin{remark}\label{remark-paraboloide} If we consider the composition of the map $\varphi_r$ in \eqref{r-harmonic-examples-par} with the $k$-fold rotation $e^{{\rm i}\gamma} \mapsto e^{{\rm i}k\gamma}$, then we still have an $r$-harmonic map whose index and nullity can be studied with the methods of this paper. However, since that would not add anything new in terms of methods, we have decided to omit further details in this direction.
\end{remark}

\end{document}